\documentclass[11pt,reqno]{amsart}

\usepackage{graphics,wrapfig, graphicx, mathrsfs,xcolor}
\usepackage{hyperref}
\usepackage{amsfonts,amsthm,amsmath,amssymb,amscd,mathrsfs,graphicx,xcolor,subfig,tikz}
\usepackage{graphics}
\usepackage{indentfirst,centernot}
\usepackage{cite}
\usepackage{bm, enumerate,xcolor}
\usepackage[dvips]{epsfig}
\usepackage{latexsym}
\usepackage{float}
\usepackage{mathtools}
\usepackage{amssymb,amscd,graphicx,xcolor,subfig,tikz}
\usepackage{graphics}
\usepackage{indentfirst}
\usepackage{bm, enumerate,xcolor}
\usepackage[dvips]{epsfig}
\usepackage{latexsym}
\usepackage{float}
\usepackage{amsmath}
\usepackage{amssymb}

\newtheorem{theorem}{Theorem}[section]

\newtheorem{remark}{Remark}[section]

\newcommand{\dd}{\mathrm{d}}
\newcommand{\T}{\mathbb{T}}
\numberwithin{equation}{section} \numberwithin{equation}{section}
\numberwithin{example}{section} \numberwithin{remark}{section}
\numberwithin{figure}{section} \numberwithin{algorithm}{section}

\def\ba{\begin{array}}
\def\ea{\end{array}}
\def\bma{\left(\begin{matrix}}
\def\ema{\end{matrix}\right)}
\def\be{\begin{equation}}
\def\ee{\end{equation}}

\theoremstyle{definition}
\newtheorem*{key-original-step}{Key Original Step}
\newenvironment{heuristics}
  {\par\noindent\textbf{Heuristics:}\begin{quote}\itshape}
  {\end{quote}}

\begin{document}
\title[Lower Bounds by AI]{Lower Bounds for Advection-Diffusion Equations: An Exploration with AI-Generated Proofs}

\author[C. An]{Chenyang An}
\address{Chenyang An: University of California, San Diego, La Jolla, United States}
\email{cya.portfolio@gmail.com}

\author[X. Xu]{Xiaoqian Xu}
\address{Xiaoqian Xu: Duke Kunshan University, Zu Chongzhi Center, No. 8 Duke Avenue, Kunshan, Jiangsu Province, 215316, P.R. China}
\email{xiaoqian.xu@dukekunshan.edu.cn}

\date{\today}

\subjclass[2020]{}%

\keywords{}%

\thanks{}
\begin{abstract}
We establish explicit lower bounds for advection-diffusion equations in three settings: a polynomial $\dot H^{-1}$ bound for inviscid shears with $u\in L^\infty_t W^{1,1}_y$, a uniform positive lower bound on the mixing scale for diffusive shears, and an exponential $L^2$ bound for rapidly oscillating time-periodic flows. All constants are explicit in the data. 

The proofs were generated entirely by a multi-agent math proving system, QED \cite{QED}, without expert human intervention, serving as a test of AI's capability to produce rigorous mathematics.

\end{abstract}
\maketitle

\section{Introduction}
\label{sec:intro}

The advection--diffusion equation
\begin{equation}\label{eq:ad}
\partial_t \rho + \mathbf{u} \cdot \nabla \rho = \mu \Delta \rho
\end{equation}
on the torus $\mathbb{T}^d$ provides a fundamental setting in which to study the competition between transport and dissipation. When $\mathbf{u}$ is divergence-free, the dynamics are driven by two opposing effects: advection stretches and folds the scalar field, creating ever finer scales, while diffusion smooths the field and damps gradients. Understanding the long-time balance between these mechanisms is at the heart of the mathematical theory of mixing, with connections to turbulence theory~\cite{batchelor1959small}, chemical reactor design~\cite{danckwerts1952definition}, and atmospheric science~\cite{pierrehumbert1994tracer}.

For a mean-zero solution the $L^2$ norm satisfies
\[
\frac{\dd}{\dd t}\|\rho(t)\|_{L^2}^2 = -2\mu\|\nabla \rho(t)\|_{L^2}^2,
\]
so the norm is non-increasing.  In the absence of advection the decay is exponential, governed by the Poincaré inequality.  The presence of a velocity field can accelerate this dissipation, a phenomenon termed enhanced dissipation, which has been the subject of extensive research~\cite{constantin2008diffusion,bedrossian2017enhanced,wei2021diffusion,wei2020linear,kiselev2016suppression,feng2019dissipation,zelati2020relation,thiffeault2012using}.

The question of optimal decay rates for the $L^2$ norm has attracted considerable attention. A fundamental lower bound was proved in \cite{poon1996qnique, hongjie2017, miles2018diffusion}, that is for any
bounded velocity field,
\[
\|\rho(t)\|_{L^2} \ge C \exp(-C e^{Ct}),
\]
so the norm cannot drop faster than a double exponential. Rowan~\cite{rowan2025superexponential} subsequently proved that this bound is sharp by constructing bounded flows that indeed achieve double-exponential decay. Thus, without additional structure, the $L^2$ norm may decay doubly-exponentially, but not faster.

A natural question is what structural conditions restore the more familiar exponential lower bound. For shear flows, Huang and Xu~\cite{huang2025exponential} recently proved that an exponential $L^2$ lower bound is possible, confirming that diffusion can fundamentally suppress mixing. Their analysis, however, is non-constructive, leaving open the explicit dependence of the decay exponent on the data. In this paper we develop several new directions related to these lower bounds, with a unified focus on explicit, constructive estimates. We treat three increasingly general settings, each exposing a different aspect of the interplay between advection, diffusion, and temporal oscillation.

In this paper, first, we study the purely advective limit of a shear flow, where the absence of diffusion leads to a polynomial lower bound on the $\dot H^{-1}$ norm.  Second, we add a small diffusivity and consider a bounded shear; here we obtain an explicit exponential $L^2$ lower bound and a uniform positive lower bound on the mixing scale $\|\rho\|_{\dot H^{-1}}/\|\rho\|_{L^2}$, which is shown to be sharp in the $\nu$-scaling.  This gives a rigorous foundation for the arrested cascade observed numerically in~\cite{miles2018diffusion} and connects to the broader theory of mixing scales~\cite{mathew2005multiscale,thiffeault2012using}. Finally, we consider general divergence-free velocity fields with fast time-periodic oscillations, proving that for sufficiently large frequencies an exponential $L^2$ lower bound is restored, with explicit constants.  As a by-product, this also provides an exponential bound for time-independent flows.

\medskip
\noindent\textbf{Inviscid shear mixing.}
Consider the transport equation
\[
\partial_t \theta + u(y,t)\,\partial_x\theta = 0,\qquad \theta(0)=\theta_0\in C^\infty(\T^2),\ \int \theta_0=0,
\]
where the shear satisfies $u\in L^\infty_t W^{1,1}_y(\T)$. Lower bounds on negative Sobolev norms for inviscid shears have been studied under several different regularity assumptions. For $C^\alpha$ shear flows, Colombo, Coti Zelati, and Widmayer~\cite{colombo2020mixing} proved that the $L^2_x H^{-1}_y$ norm satisfies a uniform upper bound of order $t^{-1}$ for all times, together with a matching lower bound along a sequence of times $t'_m\to\infty$:
\[
\|\theta_\star(t'_m)\|_{L^2_x H^{-1}_y} \ge C (t'_m)^{-1}\|\theta^{\mathrm{in}}_\star\|_{L^2_x H^1_y}.
\]
Galeati and Gubinelli~\cite{galeati2023mixing} considered generically rough shears in the Besov scale $B^\alpha_{1,\infty}$ and established sharp mixing rates in the $H^{1/2}$ norm, uniform for all times; along certain diverging sequences $\tilde t_n$, they also proved a lower bound for the stronger $H^{-1}$ norm:
\[
\|e^{-\tilde t_n u\partial_x}f_0\|_{H^{-1}} \gtrsim \tilde t_n^{-1}\|f_0\|_{H^1}.
\]
For cellular flows with a palenstrophy constraint $u\in L^\infty_t \dot W^{s,p}$ ($s>1$), Crippa and Schulze~\cite{crippa2017cellular} obtained a polynomial lower bound on the $\dot H^{-1}$ norm along a discrete sequence of times:
\[
\|\theta(T_n)\|_{\dot H^{-1}} \ge C T_n^{-2/(s-1)}.
\]

Our contribution is to extend this line of inquiry to the still weaker $W^{1,1}$ regularity class, and to obtain a bound that holds for \emph{all} times rather than along a subsequence.

\begin{theorem}[Polynomial $\dot H^{-1}$ lower bound for inviscid shears]\label{thm:inviscid}
Let $\theta$ be the unique solution of the above transport problem with $\theta_0\not\equiv0$. Then there exists a constant $c_*>0$, depending only on $\theta_0$ and $u$, such that
\[
\|\theta(t)\|_{\dot H^{-1}} \ge \frac{c_*}{1+t^2}\qquad\text{for all }t\ge0.
\]
\end{theorem}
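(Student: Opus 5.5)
The plan is a duality argument in Fourier in $x$. Since the shear acts only through $\partial_x$, each $x$-Fourier mode evolves independently. Writing $\theta(x,y,t)=\sum_k \hat\theta_k(y,t)e^{ikx}$, we get
\[
\hat\theta_k(y,t)=\hat\theta_{0,k}(y)\,e^{-ikU(y,t)},\qquad U(y,t)=\int_0^t u(y,s)\,\dd s .
\]
The $k=0$ mode is conserved: $\hat\theta_0(y,t)=\hat\theta_{0,0}(y)$. If this mode is nonzero, it has mean zero (because $\int\theta_0=0$), so its $\dot H^{-1}$ norm is a positive constant, and $\|\theta(t)\|_{\dot H^{-1}}\ge\|\hat\theta_{0,0}\|_{\dot H^{-1}_y}$ for all $t$. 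Otherwise we fix some $k\neq0$ with $\hat\theta_{0,k}\not\equiv0$. Since the Fourier modes are orthogonal in $\dot H^{-1}$, it suffices to bound the single mode $f(y,t)=\hat\theta_{0,k}(y)e^{-ikU(y,t)}$ from below in the one-mode $\dot H^{-1}$ norm, whose symbol is $(k^2+\eta^2)^{-1}$.

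For the lower bound we use duality,
\[
\|f\|_{\dot H^{-1}}\ \ge\ \frac{|\langle f,\phi\rangle|}{\|\phi\|_{\dot H^1}},
\]
with the test function chosen to undo the phase:
\[
\phi(y)=\hat\theta_{0,k}(y)\,e^{-ikU(y,t)} \quad(\text{times } e^{ikx}).
\]
Then $\langle f,\phi\rangle=\|\hat\theta_{0,k}\|_{L^2}^2>0$, a constant independent of $t$. It remains to bound $\|\phi\|_{\dot H^1}$, and here $W^{1,1}$ regularity is exactly what we need. We have
\[
\partial_y\phi=\bigl(\partial_y\hat\theta_{0,k}-ik\,\partial_yU\,\hat\theta_{0,k}\bigr)e^{-ikU},
\qquad
|\partial_yU(y,t)|\le\int_0^t|\partial_yu(y,s)|\,\dd s .
\]
The $\partial_y\hat\theta_{0,k}$ term is bounded because $\theta_0$ is smooth. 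The obstacle is that $\partial_yU$ is only in $L^1_y$ (with norm at most $t\|u\|_{L^\infty_tW^{1,1}_y}$), not in $L^2_y$, so $\|\partial_y\phi\|_{L^2}$ need not be finite.

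To get around this, we mollify the phase: replace $U$ by $U_\epsilon=U*\eta_\epsilon$ in $y$. On $\T$, Sobolev embedding gives $\|U\|_{L^\infty}\lesssim t\|u\|_{L^\infty_tW^{1,1}}$, which keeps the phases controlled. The mollification produces the following estimates:
\begin{itemize}
\item $\|\partial_yU_\epsilon\|_{L^2}\lesssim \epsilon^{-1/2}\|\partial_yU\|_{L^1}\lesssim \epsilon^{-1/2}t$;
\item $\|U-U_\epsilon\|_{L^1}\le\epsilon\|\partial_yU\|_{L^1}\lesssim\epsilon t$.
\end{itemize}
Choosing $\phi=\hat\theta_{0,k}e^{-ikU_\epsilon}$ and using $|e^{ia}-e^{ib}|\le|a-b|$, we get
\[
\langle f,\phi\rangle\ \ge\ \|\hat\theta_{0,k}\|_{L^2}^2-|k|\,\|\hat\theta_{0,k}\|_{L^\infty}^2\,\|U-U_\epsilon\|_{L^1}\ \ge\ \|\hat\theta_{0,k}\|_2^2-Ck\epsilon t .
\]
Choosing $\epsilon\sim c/(1+t)$ makes this at least $\tfrac12\|\hat\theta_{0,k}\|_2^2$. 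For the same $\epsilon$,
\[
\|\phi\|_{\dot H^1}\lesssim 1+|k|\,\|\hat\theta_{0,k}\|_\infty\,\epsilon^{-1/2}t\lesssim (1+t)^{3/2}.
\]
This yields the lower bound $c(1+t)^{-3/2}$, which is stronger than the claimed $c_*/(1+t^2)$. The target rate $(1+t)^{-2}$ therefore leaves room if cruder estimates are used: for instance, bounding $\|\partial_yU_\epsilon\|_{L^2}$ by $\|\partial_yU_\epsilon\|_{L^\infty}\lesssim\epsilon^{-1}t$ gives exactly $(1+t)^{-2}$.

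The main obstacle is this middle step: justifying the approximation of the phase with only $W^{1,1}$ control and balancing $\epsilon$ against $t$. Everything else is bookkeeping, assembling $c_*$ from the following quantities:
\[
\|\hat\theta_{0,k}\|_{L^2},\quad \|\hat\theta_{0,k}\|_{W^{1,\infty}},\quad k,\quad \|u\|_{L^\infty_tW^{1,1}_y}.
\]
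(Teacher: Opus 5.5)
Your proof is correct, and it reaches the theorem by a genuinely different mechanism. The paper never regularizes the phase. It writes the mode as $F_k=e^{ik\Phi}F_k^0$ and combines conservation of $\|F_k(t)\|_{L^2_y}$ with the linear growth $\|\partial_yF_k(t)\|_{L^1}\le A+Bt$. This gives the coefficient decay $|\hat\theta(k,m,t)|\le (A+Bt)/|m|$, from which the paper concludes that at least half of the mode's mass lies in $|m|\le N(t)\sim t^2$ and reads off $\|\theta(t)\|_{\dot H^{-1}}^2\gtrsim N(t)^{-2}$.

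You instead use duality with the phase-conjugated, mollified profile $\hat\theta_{0,k}e^{-ikU_\epsilon}$, and the $W^{1,1}$ hypothesis enters twice. First, $\|U-U_\epsilon\|_{L^1}\le\epsilon\|\partial_yU\|_{L^1}$ keeps $|\langle f,\phi\rangle|\ge\frac12\|\hat\theta_{0,k}\|_{L^2}^2$ once $\epsilon\sim(1+t)^{-1}$; state this for the modulus or the real part, not for the complex pairing itself. Second, Young's inequality gives $\|\partial_yU_\epsilon\|_{L^2}\lesssim\epsilon^{-1/2}t$. Balancing the two yields $(1+t)^{-3/2}$, which implies the theorem since $(1+t)^{3/2}\le(1+t)^2\le 2(1+t^2)$. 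So your route costs a mollification step but buys a better exponent. The paper's route is entirely elementary and also shows that a fixed fraction of each mode's energy stays in an explicit frequency ball.

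Neither exponent is optimal, though. In the paper's framework, also use $\|F_k(t)\|_{L^\infty}=\|F_k^0\|_{L^\infty}$ and, with $\tau_hF=F(\cdot+h)$,
\[
\|\tau_hF_k-F_k\|_{L^2}^2\le\|\tau_hF_k-F_k\|_{L^\infty}\|\tau_hF_k-F_k\|_{L^1}\le 2\|F_k^0\|_{L^\infty}|h|(A+Bt).
\]
Averaging over $0\le h\le 1/N$ then bounds the tail $\sum_{|m|>N}|\hat\theta(k,m,t)|^2$ by a constant times $\|F_k^0\|_{L^\infty}(A+Bt)/N$. Hence $N(t)\sim 1+t$ already suffices, and one gets $\|\theta(t)\|_{\dot H^{-1}}\gtrsim(1+t)^{-1}$. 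The Lipschitz sawtooth shear $u(y)=|y|$ shows this rate cannot be improved.
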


The proof uses the exact phase evolution of $x$-Fourier modes: a nonzero $x$-mode $e^{ikx}$ evolves by multiplication with $e^{ik\Phi(y,t)}$, conserving its $L^2_y$ mass, while the $W^{1,1}_y$ norm of $\Phi$ grows at most linearly. This forces a fixed fraction of the mode's energy to remain in low $y$-frequencies, yielding the $O(t^{-2})$ lower bound.

\medskip
\noindent\textbf{Mixing scale for diffusive shears.}
We next turn to the advection-diffusion equation with a bounded shear,
\begin{equation}\label{eq:shear-nu}
\partial_t \rho + U(t,y)\,\partial_x \rho = \nu \Delta \rho,\qquad \rho(0)=\rho_0\in C^\infty(\T^2),\ \int\rho_0=0,
\end{equation}
where $\nu>0$ and $U\in L^\infty_{t,y}(\T)$.  A physically important quantity in this setting is the mixing scale $\|\rho\|_{\dot H^{-1}}/\|\rho\|_{L^2}$, which measures the characteristic filamentation length.  Miles and Doering~\cite{miles2018diffusion} observed numerically that this scale seems to encounter a limiting Batchelor scale, beyond which diffusion prevents further refinement. The double-exponential decay constructed by Rowan~\cite{rowan2025superexponential} corresponds to the absence of such a limiting scale.  For a bounded shear we prove that the mixing scale is uniformly bounded away from zero, providing a rigorous justification for the arrested cascade. By a completely constructive Fourier method we first obtain an explicit exponential $L^2$ lower bound.

\begin{theorem}[Explicit $L^2$ lower bound and mixing scale]\label{thm:mixing}
For the solution $\rho$ of \eqref{eq:shear-nu} with $\rho_0\not\equiv0$, there exists an explicit constant $c_2>0$, computable from $\rho_0$, $U$, and $\nu$, such that
\[
\|\rho(t)\|_{L^2}\ge \|\rho_0\|_{L^2}\, e^{-c_2 t}\qquad(t\ge0).
\]
Moreover, the mixing scale satisfies a uniform lower bound: one can determine a constant $c_*>0$, again explicit in the data, for which
\[
\frac{\|\rho(t)\|_{\dot{H}^{-1}}}{\|\rho(t)\|_{L^2}}\ge c_*\qquad\text{for all }t\ge0,
\]
in the small diffusivity regime $0<\nu\ll1$.
\end{theorem}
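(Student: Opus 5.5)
The plan is to decompose in $x$-Fourier modes and run a frequency-function (Dirichlet quotient) argument on each mode. Write $\rho(t,x,y)=\sum_k \hat\rho_k(t,y)e^{ikx}$. Each mode solves the decoupled one-dimensional problem
\[
\partial_t \hat\rho_k + \nu(-\partial_{yy}+k^2)\hat\rho_k + ikU\hat\rho_k = 0 ,
\]
with $A_k=\nu(-\partial_{yy}+k^2)$ self-adjoint and nonnegative, and $B_k=ikU$ skew-adjoint with $\|B_k\|_{L^2\to L^2}\le |k|\,\|U\|_{L^\infty}$. Since $\rho_0\not\equiv0$, I fix one mode $k_0$ with $\hat\rho_{k_0}(0)\ne0$; if possible I take the lowest such $|k_0|$. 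Then $\|\rho(t)\|_{L^2}\ge\|\hat\rho_{k_0}(t)\|_{L^2}$, so it suffices to bound this single mode from below.

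\textbf{Step 1: the $L^2$ bound via the Dirichlet quotient.} For $f=\hat\rho_{k}$, set $N(t)=\langle A_kf,f\rangle/\|f\|^2$. Then $\frac{\dd}{\dd t}\log\|f\|^2=-2N$, because $B_k$ is skew and drops out. Differentiating $N$ gives
\[
\dot N = -\frac{2\|(A_k-N)f\|^2}{\|f\|^2}-\frac{2\,\mathrm{Re}\langle B_kf,(A_k-N)f\rangle}{\|f\|^2}.
\]
Young's inequality absorbs the cross term into the good term, leaving $\dot N\le \tfrac12\|B_kf\|^2/\|f\|^2\le \tfrac12 k^2\|U\|_\infty^2$. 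This uses only $U\in L^\infty$: we never differentiate $U$, because the cross term is estimated by $\|B_kf\|\,\|(A_k-N)f\|$ rather than by integration by parts. By itself this gives only linear growth of $N$, hence a Gaussian lower bound.

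To upgrade to an exponential bound I would make the quotient self-correcting. Consider $\tilde N = N$ on intervals where $N\ge M$, with a threshold $M\sim \nu k^2 + k^2\|U\|_\infty^2/\nu$ to be chosen. The first try is to show that above $M$ the variance term $\|(A_k-N)f\|^2/\|f\|^2$ dominates $\|B_kf\|^2/\|f\|^2$. The natural mechanism is a lower bound on the variance of $A_k$ in the state $f$, compared with the bounded perturbation; the fallback is a restart argument on unit time intervals, using the smoothing of the heat part to reset $N$. Either way the goal is $N(t)\le \max(N(0),M)=:c_2/1$ for all $t$. Integrating $\frac{\dd}{\dd t}\log\|f\|^2=-2N$ then gives $\|\hat\rho_{k_0}(t)\|\ge\|\hat\rho_{k_0}(0)\|e^{-c_2t}$, and comparing with the full norm yields the stated $\|\rho_0\|e^{-c_2t}$ after adjusting $c_2$ explicitly through the ratio $\|\hat\rho_{k_0}(0)\|/\|\rho_0\|$.

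\textbf{Step 2: the mixing scale.} Here I use the interpolation $\|\rho\|_{L^2}^2\le\|\rho\|_{\dot H^{-1}}\|\nabla\rho\|_{L^2}$, which gives
\[
\frac{\|\rho\|_{\dot H^{-1}}}{\|\rho\|_{L^2}}\ge \frac{\|\rho\|_{L^2}}{\|\nabla\rho\|_{L^2}}=\Big(\tfrac{\nu}{\mathcal N(t)}\Big)^{1/2},
\]
where $\mathcal N$ is the full Dirichlet quotient. So I need a uniform bound on $\mathcal N(t)$ for the whole solution, not just one mode. High $x$-modes are handled by the trivial damping $e^{-\nu k^2 t}$. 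This lets me truncate to $|k|\le K$, with $K$ explicit in $\rho_0$, $U$ and $\nu$, plus a remainder that is small relative to the $k_0$ mode by Step 1 whenever $\nu K^2 > c_2$. For the finitely many remaining modes I apply the bounded-quotient estimate of Step 1, which yields $c_*\sim (\nu/M)^{1/2}$. In the regime $\nu\ll1$ this is of order $\nu/(K\|U\|_\infty)$.

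\textbf{Main obstacle.} The hard step is preventing the linear growth of $N$ in Step 1, i.e.\ proving $\sup_t N(t)<\infty$ with only $U\in L^\infty$. The naive Young estimate is not coercive, since the variance $\|(A_k-N)f\|^2$ can vanish on eigenfunctions. The sharper bound must therefore come from comparing the variance with $\|B_kf\|^2$ when $N$ is large: high-frequency states of $-\partial_{yy}$ spread over many eigenvalues at scale $N$ unless they are nearly eigenfunctions, and on nearly-eigenfunctions $B_k$ moves $N$ by at most $O(|k|\|U\|_\infty)$ in a way the decay can absorb. I would attempt this first. If it fails, I would fall back on the unit-time restart argument.
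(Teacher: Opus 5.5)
\textbf{The gap.} Your whole argument rests on a uniform-in-time bound $\sup_{t\ge0}N(t)<\infty$ for the Dirichlet quotient of each relevant $x$-mode, and this is never proved. What you actually prove, $\dot N\le\tfrac12k^2\|U\|_\infty^2$, gives only the Gaussian bound $\|f(t)\|\ge\|f(0)\|\exp\bigl(-N(0)t-\tfrac14k^2\|U\|_\infty^2t^2\bigr)$. Step 2 needs the same missing bound for every $|k|\le K$, so neither conclusion of the theorem follows.

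Neither suggested fix closes the argument. Write $V=\|(A_k-N)f\|^2/\|f\|^2$. Your identity then gives at best $\dot N\le-2V+2|k|\,\|U\|_\infty\sqrt V$. This is compatible with $N$ drifting upward at the constant rate $\tfrac12k^2\|U\|_\infty^2$ (take $\sqrt V=|k|\,\|U\|_\infty/2$). The remark about nearly-eigenfunctions is never turned into an inequality.

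The unit-time restart is circular. Parabolic smoothing only gives $N(t+1)\le\nu k^2+C\|f(t)\|^2/\|f(t+1)\|^2=\nu k^2+C\exp\bigl(2\int_t^{t+1}N\bigr)$. Your only control on that integral is $\int_t^{t+1}N\le N(t)+\tfrac14k^2\|U\|_\infty^2$, and iterating produces a tower of exponentials, not a uniform bound.

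A smaller point: near $t=0$ a single mode cannot give $\|\rho_0\|e^{-c_2t}$ for any $c_2$ when $\|\hat\rho_{k_0}(0)\|<\|\rho_0\|$. You need a separate short-time bound on the full solution; the paper gets this from the mild formulation.

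\textbf{The missing idea.} Your threshold, $y$-frequency of order $|k|\,\|U\|_\infty/\nu$, is the right scale. It has to be exploited through the discreteness of the vertical spectrum, not through $N(t)$. Adjacent heat eigenvalues near $l=N$ are separated by $\nu(2N+1)$, which beats the cross-frequency transfer rate $|k|\,\|U\|_\infty$ once $N\ge|k|\,\|U\|_\infty/\nu$. The paper uses this in two places.

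For the $L^2$ bound, it extends $e^{\Lambda t}f_k$ to all of $\mathbb R_t$ and takes a Fourier transform in time. It places $\Lambda=\nu(k^2+m^2+m+\tfrac12)$ halfway between two heat eigenvalues, so every resolvent factor is at most $(\nu m)^{-1}$. The $U$-forcing, of relative size $k^2\|U\|_\infty^2/(\nu^2m^2)$, is then absorbed once $m\gtrsim|k|\,\|U\|_\infty/\nu$. Combined with a short-time bound, this gives an exponential lower bound at rate of order $\nu m^2$.

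For the mixing scale, it splits $f_k$ at a cutoff $N_k\ge\max\{J_k,\lceil|k|\,\|U\|_\infty/\nu\rceil\}$ into low and high energies $L,H$. It shows $\{H\le L\}$ is forward invariant, since on $Z=H-L\ge0$ one has $Z'\le-2\nu(k^2+N_k^2)Z-2\nu H\le0$. This controls $\|\rho\|_{\dot H^{-1}}/\|\rho\|_{L^2}$ directly. It never needs an upper bound on $\|\nabla\rho\|/\|\rho\|$, which is stronger than the theorem requires and is exactly what you could not obtain.

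This invariance alone also gives $L'\ge-2\bigl(\nu(k^2+N_k^2)+|k|\,\|U\|_\infty\bigr)L$, hence an exponential lower bound for each mode. So it could replace your Step 1 entirely.
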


The proof proceeds mode-by-mode in the conserved $x$-Fourier variable. For each nonzero $x$-mode, a high-low frequency decomposition in $y$ shows that the low block eventually retains at least half of the mode's energy, provided the block size is chosen large enough relative to $|k|\,\|U\|_\infty/\nu$. Once this is established, the $H^{-1}$ norm is controlled by the low modes, giving the uniform bound.

\medskip
\noindent\textbf{Fast time-periodic oscillations.}
Finally, we consider general divergence-free velocity fields that are time-periodic with frequency $A$:
\begin{equation}\label{eq:fast}
\partial_t \rho + u(At,x,y)\cdot\nabla \rho = \nu \Delta \rho,\qquad u(t+L,\cdot)=u(t,\cdot),
\end{equation}
with $u\in L^\infty_t W^{1,\infty}_{x,y}$. For large $A$, the fast oscillation temporally averages the advection, and we prove that the exponential lower bound is restored.

\begin{theorem}[Exponential lower bound for fast oscillations]\label{thm:fast}
There exist an explicit threshold $A_0>0$, an exponent $c_A>0$, and a constant $C>0$, all expressed in terms of $\rho_0$, $u$, $L$, and $\nu$, such that for every $A>A_0$ the solution of \eqref{eq:fast} satisfies
\[
\|\rho(t)\|_{L^2}\ge C e^{-c_A t}\qquad(t\ge0).
\]
In particular, for time-independent ($L$ arbitrary) or time-periodic flows with sufficiently large frequency, no superexponential decay can occur.
\end{theorem}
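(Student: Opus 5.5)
The plan is to track the ratio $\Lambda(t)=\|\nabla\rho(t)\|_{L^2}^2/\|\rho(t)\|_{L^2}^2$ (Dirichlet quotient). The energy identity gives
\[
\frac{\dd}{\dd t}\log\|\rho\|_{L^2}^2=-2\nu\Lambda .
\]
An exponential lower bound therefore follows once we show $\int_0^t\Lambda\,\dd s\le C_0+c\,t$. For a general Lipschitz field, the standard computation gives
\[
\frac{\dd}{\dd t}\Lambda\le 2\|\nabla u\|_{L^\infty}\Lambda-2\nu\Bigl(\frac{\|\Delta\rho\|^2}{\|\rho\|^2}-\Lambda^2\Bigr)\le 2\|\nabla u\|_\infty\Lambda ,
\]
which only yields $\Lambda\le\Lambda(0)e^{2\|\nabla u\|_\infty t}$. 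This is the double-exponential bound. To improve it, I will exploit the fast oscillation by averaging over one period $L/A$.

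\textbf{Step 1: splitting off the mean and time-averaging.} Write $u(At,\cdot)=\bar u+w(At,\cdot)$, where $\bar u=\frac1L\int_0^Lu(s,\cdot)\,\dd s$ is divergence-free and $w$ has zero time-mean over a period. Let $W(\tau,\cdot)=\int_0^\tau w(s,\cdot)\,\dd s$, which is $L$-periodic and bounded in $W^{1,\infty}$ by $L\|u\|_{L^\infty W^{1,\infty}}$. Then $w(At)=\frac1A\partial_t[W(At)]$. The growth term in $\frac{\dd}{\dd t}\|\nabla\rho\|^2$ from $w$ is $-2\int\nabla\rho\cdot\nabla w(At)\cdot\nabla\rho$. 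Integrating it by parts in time produces a boundary term of size $\frac{C}{A}\|\nabla W\|_\infty\|\nabla\rho\|^2$ and a bulk term involving $\partial_t\nabla\rho$. Substituting the equation for $\partial_t\nabla\rho$, the bulk term splits into three pieces:
\begin{itemize}
\item an advective piece of size $\frac{C}{A}\|u\|_{W^{1,\infty}}\|W\|_{W^{1,\infty}}\|\nabla\rho\|^2$, plus a second-order piece of the same form with $\|\nabla\rho\|\|\nabla^2\rho\|$;
\item a diffusive piece of size $\frac{\nu}{A}\|\nabla W\|_\infty\|\nabla\rho\|\|\Delta\rho\|$ (after moving one derivative).
\end{itemize}
The diffusive and second-order pieces are absorbed by Young's inequality into the dissipation $\nu\|\Delta\rho\|^2$, using a modified energy $E=\|\nabla\rho\|^2+\frac1A\int\nabla\rho\cdot\nabla W(At)\cdot\nabla\rho$. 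This is equivalent to $\|\nabla\rho\|^2$ once $A\ge 2\|\nabla W\|_\infty$. That is the first threshold.

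\textbf{Step 2: the mean field.} The remaining growth comes from $\bar u$, which is time-independent. Here the $\mathbb{T}^d$ structure alone still only gives exponential growth of $\Lambda$, namely $\Lambda(t)\le\Lambda(0)e^{2\|\nabla\bar u\|_\infty t}$. That growth is fine for the gradient but not for $\int\Lambda$.

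I will therefore instead aim directly at a differential inequality of the form
\[
\frac{\dd}{\dd t}\Lambda\le a+b\Lambda-\nu\Lambda^2 .
\]
The key is to keep the term $-2\nu(\|\Delta\rho\|^2/\|\rho\|^2-\Lambda^2)$ and use the sharper estimate
\[
\Bigl|\int\nabla\rho\cdot\nabla u\cdot\nabla\rho\Bigr|=\Bigl|\int\rho\,\nabla u:\nabla^2\rho+\dots\Bigr|\le\|\nabla u\|_\infty\|\rho\|\|\Delta\rho\| ,
\]
obtained by integrating by parts once, with the divergence-free condition killing the symmetric term. By Young's inequality, $\frac{\dd}{\dd t}\Lambda\le \frac{\|\nabla u\|_\infty^2}{\nu}-\nu\cdot 0$, i.e. $\Lambda(t)\le\Lambda(0)+\frac{C\|u\|_{W^{1,\infty}}^2}{\nu}t$. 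That is only linear growth, giving a Gaussian-type lower bound $e^{-Ct^2}$.

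\textbf{Step 3 and main obstacle.} Step 2 shows the real difficulty: the bound on $\Lambda$ must be \emph{uniform} in $t$. I expect this to be the main obstacle, and I would handle it as follows. Work with the ratio itself, which satisfies
\[
\dot\Lambda=-2\nu\|(\Delta+\Lambda)\rho\|^2/\|\rho\|^2-\frac{2}{\|\rho\|^2}\int\nabla\rho\cdot\nabla u\cdot\nabla\rho .
\]
Estimate the advective term by $C\|\nabla u\|_\infty\|\rho\|^{-1}\|(\Delta+\Lambda)\rho\|\,\Lambda^{1/2}$; subtracting $\Lambda\rho$ is allowed because $\int\nabla\rho\cdot\nabla u\cdot\nabla\rho$ pairs against $\rho$ with no loss. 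Young's inequality then gives $\dot\Lambda\le\frac{C\|\nabla u\|_\infty^2}{\nu}\Lambda$, which is exponential again. So the averaging from Step 1 must be applied at the level of $\Lambda$, making the effective growth rate $b\sim\|\nabla\bar u\|+O(1/A)$. Smallness of $b$ relative to something is needed, and I would try to obtain it from a Poincaré-type coercivity in the quotient. If this fails, I will settle for using the averaged bound to control $\int_0^t\Lambda$ on each period via Gronwall with rate $O(A^{-1})$ and accept the constant $c_A$ depending on it.
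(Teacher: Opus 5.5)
Your proposal does not close. The step that carries the whole theorem is a time-uniform bound on the Dirichlet quotient $\Lambda$, or at least $\int_0^t\Lambda\le C_0+ct$, and that is exactly the step you leave open. None of your fallbacks supplies it.

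\begin{itemize}
\item Step 1 only shows that the oscillating part $w(At,\cdot)$ contributes $O(1/A)$. The averaged field $\bar u$ is independent of $A$ and still enters $\dot\Lambda$ at full strength. In the time-independent case, which the theorem explicitly covers, Step 1 is empty and everything rests on Steps 2--3.
\item Step 3 is the classical estimate $\dot\Lambda\lesssim\Lambda$, which gives only the double-exponential bound.
\item Step 2's linear growth does not follow even if you grant your integration-by-parts bound. With $Y=\|\Delta\rho\|_2/\|\rho\|_2$, the quotient equation gives $\dot\Lambda\le -2\nu Y^2+2\nu\Lambda^2+2\|\nabla u\|_\infty Y$. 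Young's inequality then yields $\dot\Lambda\le 2\nu\Lambda^2+\|\nabla u\|_\infty^2/(2\nu)$; you dropped the $+2\nu\Lambda^2$. The dissipation actually available in $\dot\Lambda$ is $2\nu\|(\Delta+\Lambda)\rho\|_2^2/\|\rho\|_2^2=2\nu(Y^2-\Lambda^2)$. This vanishes on Laplace eigenfunctions, so it cannot absorb a term of size $\|\nabla u\|_\infty Y$.
\item Your last fallback, Gronwall for $\Lambda$ with rate $O(A^{-1})$, would at best give $\Lambda\lesssim e^{Ct/A}$. That yields only $\|\rho(t)\|_2\ge\exp(-C'e^{Ct/A})$, which is still double-exponential, not $e^{-c_At}$.
\end{itemize}

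The underlying issue is that your inequalities see the drift only through instantaneous norms such as $\|\bar u\|_{W^{1,\infty}}$. They therefore never use that $\bar u$ is autonomous, and without such structure double-exponential decay genuinely occurs for bounded drifts, as the paper recalls. The missing idea is spectral and dual, and the paper never controls $\Lambda$ at all.

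\begin{itemize}
\item The averaged adjoint $B_\nu^*=\nu\Delta+\bar u\cdot\nabla$ has a resolvent that is a weak compact perturbation of the self-adjoint heat resolvent. Keldysh's theorem therefore gives completeness of its root vectors.
\item Consequently $\rho_0$ pairs nontrivially with some finite-dimensional root space $E_\nu$, chosen with minimal $\gamma_\nu=-\operatorname{Re}\lambda_\nu$, on which $B_\nu^*\Phi_\nu=\Phi_\nu G_\nu$.
\item For large $A$ this invariant subspace is continued to an $L/A$-periodic adjoint bundle with $\partial_t\Phi_A=-(\nu\Delta+b_A\cdot\nabla)\Phi_A+\Phi_AG_A$ and $\|\Phi_A-\Phi_\nu\|+\|G_A-G_\nu\|\le\mathcal K_\nu/A$. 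This is done by a contraction in the phase variable: nonzero phase modes are inverted with $(\partial_\theta+A^{-1}\nu\Delta)^{-1}$, and the phase mean is solved through a Sylvester equation.
\item The observable $q_A(t)=\langle\rho(t),\Phi_A(t)\rangle$ then solves the finite system $q_A'=G_A^{\mathsf T}q_A$. Hence $\|q_A(t)\|\ge\frac{Q_\nu}{2D_{\nu,\eta}}e^{-c_At}$, and Cauchy--Schwarz gives $\|\rho(t)\|_2\ge Ce^{-c_At}$ with $c_A=\gamma_\nu+\eta+D_{\nu,\eta}\mathcal K_\nu/A$.
\end{itemize}

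Your averaging through the antiderivative $W$ is the right instinct for the $A$-dependence. It has to be applied to this finite-dimensional adjoint observable, not to the Dirichlet quotient.
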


The proof constructs a finite-dimensional adjoint observable from the generalized root spaces of the averaged operator $B_\nu^* = \nu\Delta + \bar u\cdot\nabla$, where $\bar u$ is the time average of $u$. The key estimate is an inversion of the zero-phase equation by $(\partial_\theta + A^{-1}\nu\Delta)^{-1}$, which trades the large frequency $A$ for control of high spatial modes. Persistence of the detecting bundle is obtained via a fast-phase parabolic averaging argument~\cite{matthies2001time}. All constants---$A_0$, $c_A$, and $C$---are given explicitly, filling a gap in the literature on lower bounds for time-periodic flows.

\medskip
\noindent \textbf{AI Proofs.}
In this work, the human mathematician formulated the initial questions, but did not intervene in the construction of the arguments: no PDE-expert guidance was given during the proof-generation process. The AI system, named QED \cite{QED}, produced the complete chain of reasoning, from the reduction of the problems to step-by-step estimates, and it verified the logical consistency of the derivations~\cite{an2026qedopensourcemultiagentgenerating}. The mathematician then reviewed the AI-verified output for correctness and assembled it into the present manuscript, making only minimal editorial adjustments. This mode of working serves as a stress test for the current proving and verification capabilities of AI in mathematical research. The details of the interaction with the AI can be found in~\cite{an2026qedopensourcemultiagentgenerating}, and a dedicated discussion is presented in Section~\ref{sec:AI}. The authors take full responsibility for the work.

\medskip
The paper is organized as follows. Section~\ref{sec:inviscid} proves Theorem~\ref{thm:inviscid} (the inviscid polynomial lower bound). Section~\ref{sec:mixing} contains the proof of Theorem~\ref{thm:mixing} (the explicit mixing-scale bound for diffusive shears). Section~\ref{sec:fast} is devoted to Theorem~\ref{thm:fast} (the exponential lower bound for fast oscillations). Finally, Section~\ref{sec:AI} reflects on the AI-based proof generation process and its implications.

\section{Inviscid shear mixing}\label{sec:inviscid}
\noindent
We begin with the simplest setting: a purely advective shear flow with no diffusion. 
In this inviscid limit, the $x$-Fourier modes decouple completely and evolve by a 
time-dependent phase shift in $y$. This exact solvability allows us to track the 
$\dot H^{-1}$ norm precisely and to prove that exponential decay is impossible under 
the sole assumption that the shear lies in $L^\infty_t W^{1,1}_y$. The argument yields 
a universal polynomial lower bound. In the rest of this paper, we denote $H^{-1}:= \dot{H}^{-1}$, the homogeneous $L^2$ negative Sobolev norm in $\mathbb{T}^2$. 

\begin{theorem}
Let $\mathbb{T}^2=[-\pi,\pi]^2$. Suppose $0\neq\theta_0(x,y)\in C^{\infty}(\mathbb{T}^2)$ with $\int_{\mathbb{T}^2}\theta_0(x,y)dxdy=0$, and $U(y,t)\in L_t^{\infty}(W_y^{1,1}(\mathbb{T}))$. Then, for the initial value problem:
\begin{equation}
\theta_t+U(y,t)\partial_x\theta=0, \quad\theta(x,y,0)=\theta_0,
\end{equation}
the solution satisfies
\begin{align*}
\|\theta\|_{H^{-1}}(t)\geq \frac{C}{1+t^2},
\end{align*}
for some constant $C$.
\end{theorem}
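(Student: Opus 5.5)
Expand $\theta$ in Fourier series in $x$: $\theta(x,y,t)=\sum_k \hat\theta_k(y,t)e^{ikx}$. Then each mode solves $\partial_t\hat\theta_k + ikU(y,t)\hat\theta_k=0$, so
\[
\hat\theta_k(y,t)=\hat\theta_k(y,0)\,e^{-ik\Phi(y,t)},\qquad \Phi(y,t)=\int_0^t U(y,s)\,ds .
\]
In particular $\|\hat\theta_k(\cdot,t)\|_{L^2_y}=\|\hat\theta_k(\cdot,0)\|_{L^2_y}$ for every $k$, and $\|\partial_y\Phi(\cdot,t)\|_{L^1_y}\le t\,\|U\|_{L^\infty_tW^{1,1}_y}=:tM$.

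If $\theta_0$ has a nonzero $x$-mode, fix $k\neq0$ with $f:=\hat\theta_k(\cdot,0)\not\equiv0$. If all $x$-modes vanish except $k=0$, then $\theta=\theta_0$ is constant in time, its $\dot H^{-1}$ norm is a positive constant, and the bound is trivial. Since $\|\theta\|_{\dot H^{-1}}^2\ge \sum_{l}|\widehat{g}(l)|^2/(k^2+l^2)$ with $g=f e^{-ik\Phi}$, it suffices to show that a fixed fraction of $\|g\|_{L^2}^2=\|f\|_{L^2}^2$ lies at $y$-frequencies $|l|\le N(t)$ with $N(t)\lesssim 1+t$. That gives $\|\theta\|_{\dot H^{-1}}^2\gtrsim \|f\|^2/(k^2+N^2)\gtrsim (1+t)^{-2}$, which is even better than claimed; the stated $(1+t^2)^{-1}$ rate follows from this.

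\textbf{Low-frequency concentration (main step).}
Use a duality/regularity argument. Split $g=P_{\le N}g+P_{>N}g$. Then
\[
\|P_{>N}g\|_{L^2}\le N^{-1/2}\|g\|_{\dot H^{1/2}}\quad\text{or}\quad \|P_{>N}g\|_{L^2}^2\le N^{-1}\|g\|_{L^\infty}\|\partial_y g\|_{L^1}\cdot C,
\]
using the elementary bound $|\hat h(l)|\le \|\partial_yh\|_{L^1}/|l|$. Summing over $|l|>N$ gives
\[
\|P_{>N}g\|_{L^2}^2\le 2\|\partial_y g\|_{L^1}^2/N .
\]
Now compute
\[
\partial_y g=(f'-ikf\,\partial_y\Phi)e^{-ik\Phi},\qquad
\|\partial_y g\|_{L^1}\le \|f'\|_{L^1}+|k|\,\|f\|_{L^\infty}\,tM=:a+bt .
\]
Choose $N=4(a+bt)^2/\|f\|_{L^2}^2$. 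Then the high part carries at most half of the mass, so $\|P_{\le N}g\|^2\ge \tfrac12\|f\|^2$. This gives $N\sim(1+t)^2$, hence
\[
\|\theta\|_{\dot H^{-1}}^2\gtrsim \|f\|^2\,(1+t)^{-4},
\qquad\text{i.e.}\qquad \|\theta\|_{\dot H^{-1}}\gtrsim (1+t)^{-2},
\]
which is exactly the claimed $C/(1+t^2)$ rate, with explicit constant depending on $\|f\|_{L^2},\|f\|_{L^\infty},\|f'\|_{L^1},k,M$.

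The main obstacle is precisely this low-frequency concentration step under only $W^{1,1}$ control of $\Phi$. The $L^1$ (not $L^2$) gradient bound forces the lossy $\ell^\infty$ Fourier decay estimate $|\hat h(l)|\le\|h'\|_{L^1}/|l|$, which is what produces the square $N\sim t^2$ and hence the $t^{-2}$ rate. I would double-check that the tail sum $\sum_{|l|>N}l^{-2}\le 2/N$ is used correctly, and that $\|f\|_{L^\infty}$ and $\|f'\|_{L^1}$ are finite; both hold by smoothness of $\theta_0$. Finally, summing over modes is unnecessary: one mode suffices for the lower bound.
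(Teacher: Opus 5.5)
Your proposal is correct and is essentially the paper's proof: the exact phase evolution $e^{-ik\Phi}$ of one nonzero $x$-mode, conservation of its $L^2_y$ mass, linear growth $\|\partial_y g\|_{L^1}\le a+bt$, the tail bound $\sum_{|l|>N}|\hat g(l)|^2\le 2\|\partial_y g\|_{L^1}^2/N$, and a cutoff $N\sim(1+t)^2$ keeping half the mass, which gives exactly $C/(1+t^2)$.

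The one thing to fix is the opening aside promising $N\lesssim 1+t$, which the bound you actually use does not deliver. Your second listed alternative, $\|P_{>N}g\|_{L^2}^2\lesssim N^{-1}\|g\|_{L^\infty}\|\partial_y g\|_{L^1}$, does not follow from $|\hat h(l)|\le\|\partial_y h\|_{L^1}/|l|$ as you suggest, but it is true. Let $A_h$ be the average over $[-h,h]$ with $h=1/N$. Test $g-A_hg$ against $g$, note that the multiplier $1-\frac{\sin(lh)}{lh}$ is at least $1-\sin 1$ for $|l|\ge N$, and use $\|g-A_hg\|_{L^1}\le \tfrac h2\|\partial_y g\|_{L^1}$. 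Since $\|g\|_{L^\infty}=\|f\|_{L^\infty}$ is time-independent, this alternative would in fact improve the rate to $(1+t)^{-1}$.
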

\begin{remark}
\noindent
\begin{enumerate}
\item The authors only edit the wording of this proof. To make it clear that the proof is done by AI, the authors keep all the format of AI's proof.
\item Details of this proof process can be found in \cite[P3]{QEDp3}
\item This is actually a ``Prove or Disprove'' question that the authors asked the AI.
\item In this proof, the constant $C$ may change from line to line.
\end{enumerate}
\end{remark}
\begin{proof}
We write the Fourier coefficients exactly as in the problem statement:
\[
\hat\theta(k,m,t)=\int_{\mathbb T^2}\theta(x,y,t)e^{i(kx+my)}\,dx\,dy.
\]

 {\bf STEP 1}: A harmless reduction on the shear

The $y$-independent part of $U$ only causes a rigid translation in $x$, so we may subtract it. Since the problem assumes $U\in L_t^\infty(W_y^{1,1}(\mathbb T))$, the average
\[
\bar U(t):=\frac1{2\pi}\int_{-\pi}^{\pi}U(y,t)\,dy,
\]
is well-defined, and so is
\[
X(t):=\int_0^t \bar U(s)\,ds,
\]
and we define
\[
\widetilde\theta(x,y,t):=\theta(x+X(t),y,t).
\]
Then $\widetilde\theta$ solves
\[
\widetilde\theta_t+\widetilde U(y,t)\partial_x\widetilde\theta=0,
\qquad
\widetilde U(y,t):=U(y,t)-\bar U(t),
\]
and $\partial_y\widetilde U=\partial_yU$. Moreover, translation in $x$ does not change any Fourier modulus, so
\[
\hat{\widetilde\theta}(k,m,t)=e^{-ikX(t)}\hat\theta(k,m,t)
\qquad\text{for every }(k,m)\in\mathbb Z^2,
\]
hence
\[
\|\widetilde\theta\|_{H^{-1}}(t)=\|\theta\|_{ H^{-1}}(t).
\]
Therefore we may assume, without loss of generality, that
\[
\int_{-\pi}^{\pi}U(y,t)\,dy=0\qquad\text{for every }t.
\]
From this point on, the proof uses only the bound $\|U\|_{L^{\infty}_tW_y^{1,1}}\le C$.

Because $U(\cdot,t)\in W^{1,1}(\mathbb T)\subset C(\mathbb T)$ and has mean zero, it takes both nonnegative and nonpositive values, so there exists $y_t\in\mathbb T$ such that $U(y_t,t)=0$. For any $y\in\mathbb T$, the one-dimensional fundamental theorem of calculus gives
\[
|U(y,t)|=\left|\int_{y_t}^{y}\partial_yU(z,t)\,dz\right|
\le \int_{-\pi}^{\pi}|\partial_yU(z,t)|\,dz
\le C,
\]
so $U$ is bounded and
\[
\Phi(y,t):=\int_0^t U(y,s)\,ds
\]
is well-defined. Also,
\[
\partial_y\Phi(y,t)=\int_0^t \partial_yU(y,s)\,ds
\]
in the distributional sense, hence
\[
\|\partial_y\Phi(\cdot,t)\|_{L^1(\mathbb T)}
\le \int_0^t \|\partial_yU(\cdot,s)\|_{L^1(\mathbb T)}\,ds
\le Ct.
\]

 {\bf STEP 2}: If there is no nonzero $x$-mode, the solution is stationary

Assume first that
\[
\hat\theta_0(k,m)=0\qquad\text{for every }k\neq0 \text{ and every }m\in\mathbb Z.
\]
Then $\theta_0$ is independent of $x$. The time-independent function
\[
\theta(x,y,t)=\theta_0(y)
\]
is a solution of the initial value problem. By uniqueness for this linear transport equation, the solution is stationary:
\[
\theta(x,y,t)=\theta_0(x,y)\qquad\text{for every }t\ge0.
\]
Because $\theta_0$ is mean zero and not identically zero, it has at least one nonzero Fourier coefficient with index $(0,m)$ and $m\neq0$. Therefore its $H^{-1}$ norm is a fixed positive constant, so it satisfies
\[
\|\theta\|_{H^{-1}}(t)\ge \frac{C}{1+t^2}
\qquad (C>0)
\]
for all $t\ge0$.

Thus the only remaining case we need to discuss is that there exists at least one integer $k\neq0$ for which the $k$-th $x$-Fourier mode of $\theta_0$ is nonzero.

 {\bf STEP 3}: Exact evolution of one nonzero $x$-Fourier mode

Fix $k\in\mathbb Z\setminus\{0\}$ such that the function
\[
F_k^0(y):=\int_{-\pi}^{\pi}\theta_0(x,y)e^{ikx}\,dx
\]
is not identically zero. Since $\theta_0\in C^\infty(\mathbb T^2)$, we have
\[
F_k^0\in C^\infty(\mathbb T),\qquad F_k^0\not\equiv0.
\]

For the solution, define
\[
F_k(y,t):=\int_{-\pi}^{\pi}\theta(x,y,t)e^{ikx}\,dx.
\]
Integrating the equation $\theta_t+U(y,t)\theta_x=0$ against $e^{ikx}$ in $x$, we obtain
\[
\partial_tF_k(y,t)+U(y,t)\int_{-\pi}^{\pi}\theta_x(x,y,t)e^{ikx}\,dx=0.
\]
Because $\theta(\cdot,y,t)$ is $2\pi$-periodic in $x$,
\[
\int_{-\pi}^{\pi}\theta_x(x,y,t)e^{ikx}\,dx
=\Big[\theta(x,y,t)e^{ikx}\Big]_{x=-\pi}^{x=\pi}
-ik\int_{-\pi}^{\pi}\theta(x,y,t)e^{ikx}\,dx
=-ik\,F_k(y,t).
\]
Therefore
\[
\partial_tF_k(y,t)-ik\,U(y,t)F_k(y,t)=0.
\]
For each fixed $y$, this is an ordinary differential equation in $t$, so
\[
F_k(y,t)=e^{ik\Phi(y,t)}F_k^0(y).
\]
In particular,
\[
|F_k(y,t)|=|F_k^0(y)|\qquad\text{for every }y,t.
\]

 {\bf STEP 4}: The $L^2_y$ mass of this mode is conserved, and its $W^{1,1}_y$ norm grows at most linearly

Since multiplication by $e^{ik\Phi(y,t)}$ has modulus $1$,
\[
\int_{-\pi}^{\pi}|F_k(y,t)|^2\,dy
=\int_{-\pi}^{\pi}|F_k^0(y)|^2\,dy.
\]
Define
\[
S:=2\pi\int_{-\pi}^{\pi}|F_k^0(y)|^2\,dy.
\]
Because $F_k^0\not\equiv0$, we have $S>0$.

Next, differentiate $F_k(y,t)=e^{ik\Phi(y,t)}F_k^0(y)$ with respect to $y$ in the distributional sense:
\[
\partial_yF_k(y,t)=e^{ik\Phi(y,t)}\Big((F_k^0)'(y)+ik\,\partial_y\Phi(y,t)\,F_k^0(y)\Big).
\]
Hence
\[
\|\partial_yF_k(\cdot,t)\|_{L^1(\mathbb T)}
\le \|(F_k^0)'\|_{L^1(\mathbb T)}
+|k|\,\|F_k^0\|_{L^\infty(\mathbb T)}\,\|\partial_y\Phi(\cdot,t)\|_{L^1(\mathbb T)}.
\]
Using $\|\partial_y\Phi(\cdot,t)\|_{L^1}\le Ct$, we obtain
\[
\|\partial_yF_k(\cdot,t)\|_{L^1(\mathbb T)}
\le A+Bt,
\]
where
\[
A:=\|(F_k^0)'\|_{L^1(\mathbb T)},
\qquad
B:=|k|\,C\,\|F_k^0\|_{L^\infty(\mathbb T)}.
\]

 {\bf STEP 5}: Quantitative control of the $y$-Fourier tail

For each $m\in\mathbb Z$,
\[
\hat\theta(k,m,t)=\int_{-\pi}^{\pi}F_k(y,t)e^{imy}\,dy.
\]
By one-dimensional Parseval on $\mathbb T$,
\[
\sum_{m\in\mathbb Z}|\hat\theta(k,m,t)|^2
=2\pi\int_{-\pi}^{\pi}|F_k(y,t)|^2\,dy
=S.
\]

Now let $m\neq0$. Integrating by parts in $y$ and using periodicity,
\[
\hat\theta(k,m,t)
=\int_{-\pi}^{\pi}F_k(y,t)e^{imy}\,dy
=-\frac1{im}\int_{-\pi}^{\pi}\partial_yF_k(y,t)e^{imy}\,dy,
\]
because the boundary term
\[
\Big[\frac{F_k(y,t)e^{imy}}{im}\Big]_{y=-\pi}^{y=\pi}
\]
vanishes. Therefore
\[
|\hat\theta(k,m,t)|
\le \frac{\|\partial_yF_k(\cdot,t)\|_{L^1(\mathbb T)}}{|m|}
\le \frac{A+Bt}{|m|}.
\]

Let
\[
V(t):=A+Bt.
\]
Then, for every integer $N\ge1$,
\[
\sum_{|m|>N}|\hat\theta(k,m,t)|^2
\le V(t)^2\sum_{|m|>N}\frac1{m^2}.
\]
Since
\[
\sum_{|m|>N}\frac1{m^2}
=2\sum_{m=N+1}^{\infty}\frac1{m^2}
\le 2\int_N^\infty \frac{dx}{x^2}
=\frac2N,
\]
we get
\[
\sum_{|m|>N}|\hat\theta(k,m,t)|^2
\le \frac{2V(t)^2}{N}.
\]

Choose
\[
N(t):=\max\left\{1,\left\lceil \frac{4V(t)^2}{S}\right\rceil\right\}.
\]
Then $N(t)\ge 1$ and
\[
\sum_{|m|>N(t)}|\hat\theta(k,m,t)|^2
\le \frac{2V(t)^2}{N(t)}
\le \frac{S}{2}.
\]
Combining this with $\sum_m |\hat\theta(k,m,t)|^2=S$, we obtain
\[
\sum_{|m|\le N(t)}|\hat\theta(k,m,t)|^2
\ge \frac{S}{2}.
\]

{\bf STEP 6}: A polynomial lower bound for $\|\theta\|_{H^{-1}}(t)$

Since the $H^{-1}$ norm is the sum of nonnegative terms, the contribution of the single fixed $x$-frequency $k$ gives
\[
\|\theta\|_{H^{-1}}^2(t)
\ge \sum_{m\in\mathbb Z}\frac{|\hat\theta(k,m,t)|^2}{k^2+m^2}.
\]
For the terms with $|m|\le N(t)$ we have $k^2+m^2\le k^2+N(t)^2$, hence
\[
\sum_{m\in\mathbb Z}\frac{|\hat\theta(k,m,t)|^2}{k^2+m^2}
\ge \frac1{k^2+N(t)^2}\sum_{|m|\le N(t)}|\hat\theta(k,m,t)|^2
\ge \frac{S}{2\bigl(k^2+N(t)^2\bigr)}.
\]
Therefore
\[
\|\theta\|_{\dot H^{-1}_{x,y}}^2(t)
\ge \frac{S}{2\bigl(k^2+N(t)^2\bigr)}.
\]

It remains to bound $N(t)$ from above. Since
\[
V(t)^2=(A+Bt)^2\le 2A^2+2B^2t^2\le 2(A^2+B^2)(1+t^2),
\]
the definition of $N(t)$ gives
\[
N(t)\le 1+\frac{4V(t)^2}{S}
\le 1+\frac{8(A^2+B^2)}{S}(1+t^2).
\]
Set
\[
D:=1+\frac{8(A^2+B^2)}{S}.
\]
Then
\[
N(t)\le D(1+t^2).
\]
Hence
\[
N(t)^2\le D^2(1+t^2)^2\le 2D^2(1+t^4),
\]
because $(1+t^2)^2=1+2t^2+t^4\le 2(1+t^4)$. Consequently,
\[
k^2+N(t)^2\le (k^2+2D^2)(1+t^4).
\]
Substituting this into the previous lower bound, we obtain
\[
\|\theta\|_{H^{-1}}^2(t)
\ge \frac{S}{2(k^2+2D^2)}\cdot \frac1{1+t^4}.
\]
Since $S>0$, the constant
\[
c_*^2:=\frac{S}{2(k^2+2D^2)}
\]
is strictly positive. Therefore
\[
\|\theta\|_{ H^{-1}}^2(t)\ge \frac{c_*^2}{1+t^4},
\]
and hence
\[
\|\theta\|_{H^{-1}}(t)\ge \frac{c_*}{1+t^2}
\qquad\text{for every }t\ge0,
\]
because $\sqrt{1+t^4}\le 1+t^2$.

\end{proof}

\begin{remark}
This proof was produced by QED, supported by a Codex coding agent integrated with ChatGPT 5.4. 
\end{remark}

\section{Explicit $L^2$ lower bound and mixing scale}\label{sec:mixing}

\noindent
The inviscid analysis in Section \ref{sec:inviscid} shows that, without diffusion, the mixing scale $\|\theta\|_{\dot H^{-1}}/\|\theta\|_{L^2}$ can only decay polynomially. Moreover, Huang and Xu~\cite{huang2025exponential} constructed a bounded shear flow and initial data for which the inviscid solution satisfies $\|\theta(t)\|_{H^{-1}} \lesssim e^{-\beta t}$, so that the mixing scale decays exponentially (see~\cite[Theorem~3.1]{huang2025exponential}). We now add a small diffusivity $\nu>0$ and consider a bounded shear $U(t,y)$. In this parabolic setting, diffusion strongly suppresses the formation of arbitrarily fine scales, and the mixing scale remains uniformly bounded away from zero for all time. We prove two explicit results.  First, the $L^2$ norm itself satisfies an explicit exponential lower bound, quantifying the main result of~\cite{huang2025exponential}. Second, we obtain a uniform positive lower bound on the mixing scale, with a constant $c_*$ that is explicit in the data and sharp in its $\nu$-scaling.  Both proofs are constructive and yield constants that depend explicitly on $\rho_0$, $U$, and $\nu$.

\begin{theorem}\label{thm:ads}
Consider the advection-diffusion equation on $\T^2$:
\begin{equation}\label{shearnu36}
    \partial_t \rho + U(t,y)\,\partial_x \rho = \nu \Delta \rho, \qquad \rho(0)=\rho_0 ,
\end{equation}
with $\rho_0\neq0$, $\rho_0\in C^\infty$, mean-zero, $0<\nu\ll1$.  Let $U$ be real-valued and $U\in L^{\infty}_{t,y}$. Then, we have the following lower bound for $\rho$:
\begin{equation}\label{eq:L2}
\|\rho\|_{L^2}(t)\geq \|\rho_0\|_{L^2}e^{-c_2t},
\end{equation}
with explicit constant $c_2$ obeying $c_2\leq C(\rho_0,U)\frac{1}{\nu}$, $C(\rho_0,U)>0$ and is explicit, $0<\nu\ll1$. 
\end{theorem}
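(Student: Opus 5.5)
The plan is to work mode by mode in the conserved $x$-Fourier variable, exactly as in the proof of the inviscid theorem of Section~\ref{sec:inviscid}, and to use a high/low splitting in $y$ to show that the dissipation rate $\|\nabla\rho\|_{L^2}^2/\|\rho\|_{L^2}^2$ stays bounded by a constant of order $\nu^{-2}$. Multiplied by $\nu$, this gives a rate of order $\nu^{-1}$.

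Write $\rho=\sum_k e^{-ikx}F_k(y,t)$, where each $F_k$ solves
\[
\partial_t F_k - ik\,U(t,y)F_k=\nu(\partial_y^2-k^2)F_k .
\]
The energy identities decouple across $k$:
\[
\tfrac{d}{dt}\|F_k\|^2=-2\nu\|\partial_yF_k\|^2-2\nu k^2\|F_k\|^2 .
\]
It is therefore enough to show, for each active mode, that $\|F_k(t)\|\ge \|F_k(0)\|e^{-c_2 t}$ with $c_2$ uniform in $k$. Summing then gives \eqref{eq:L2}.

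\textbf{Step 1: energy/enstrophy ratio for a single mode.} Consider the Dirichlet quotient $Q_k(t)=\|\partial_yF_k\|^2/\|F_k\|^2$. Differentiating, the diffusion term yields
\[
-2\nu\Bigl(\|\partial_y^2F_k\|^2-Q_k\|\partial_yF_k\|^2\Bigr)/\|F_k\|^2\le 0
\]
by Cauchy--Schwarz, because $\|\partial_yF\|^2\le\|F\|\,\|\partial_y^2F\|$. The only other contribution is the advection term $2\,\mathrm{Re}\langle \partial_y(ikUF_k),\partial_yF_k\rangle$. Since $U$ is merely $L^\infty$, this must be integrated by parts onto $\partial_y^2F_k$:
\[
\bigl|2\,\mathrm{Re}\langle ikUF_k,\partial_y^2F_k\rangle\bigr|\le 2|k|\|U\|_\infty\|F_k\|\,\|\partial_y^2F_k\| .
\]
To absorb this, I would use the gap term $\|\partial_y^2F\|^2-Q\|\partial_yF\|^2=\|\partial_y^2F+QF\|^2$, which is the standard identity for the Dirichlet quotient. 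Writing $\|\partial_y^2F\|\le\|\partial_y^2F+QF\|+Q\|F\|$, Young's inequality gives
\[
\dot Q_k\le \frac{k^2\|U\|_\infty^2}{\nu}+2|k|\|U\|_\infty Q_k .
\]
This estimate grows in time, so it is not yet good enough.

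\textbf{Step 2 (the main obstacle): obtaining a bound uniform in time.} Instead of working with the quotient directly, I would run the high/low decomposition described in the introduction. Fix $N\gg |k|\|U\|_\infty/\nu$ and split $F_k=P_{\le N}F_k+P_{>N}F_k$ in $y$-frequency. The high part obeys
\[
\tfrac{d}{dt}\|P_{>N}F\|^2\le -2\nu N^2\|P_{>N}F\|^2+2|k|\|U\|_\infty\|F\|\,\|P_{>N}F\| ,
\]
because the transfer term is bounded by $|k|\|U\|_\infty\|F\|$. Provided the total energy does not decay faster than rate $\nu N^2/2$, a Gronwall argument shows that the ratio $r=\|P_{>N}F\|/\|F\|$ becomes and remains at most $\tfrac12$ after a finite time $T_0\sim (\nu N^2)^{-1}\log(\cdot)$, which is determined by the smooth $\rho_0$. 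This requires a bootstrap. On any interval where $r\le 1/\sqrt2$, the mode's decay rate is at most
\[
2\nu\bigl(k^2+N^2+N^2 r^2\ldots\bigr) ,
\]
which still needs control of $\|\partial_yP_{>N}F\|$. To close, I would instead bound the decay rate on average. Integrating the energy identity gives
\[
\int_0^t\nu\|\partial_yF\|^2\le\ldots ,
\]
so I would rather track $\log\|F_k\|^2$ together with a weighted functional $\|F\|^2+\epsilon\|P_{>N}F\|^2$. I expect that choosing the low block size $N^2\simeq C(\rho_0,U)k^2/\nu^2$ makes the effective dissipation rate $\nu(k^2+N^2)\lesssim C/\nu$ (after normalizing the $k$-dependence), which is consistent with the claimed bound $c_2\le C(\rho_0,U)/\nu$.

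\textbf{Step 3: uniformity in $k$ and conclusion.} Large $|k|$ is harmful because the rate grows with $k$. Since $\rho_0$ is smooth, however, only the active modes matter. For the lower bound on the total $L^2$ norm it suffices to keep a single mode $k_0\ne0$ (or the $k=0$ mode, which is pure heat flow with rate $\nu m_0^2$), using $\|\rho(t)\|\ge\|F_{k_0}(t)\|$. To obtain the exact form $\|\rho_0\|e^{-c_2t}$ rather than $Ce^{-c_2t}$, I would choose $c_2$ large enough to also cover the initial transient on $[0,T_0]$, where Step 1 gives an explicit bound on $Q$. The constant $c_2$ then comes out explicit in terms of $\|U\|_\infty$, the dominant $k$, the ratio $\|\partial_y\rho_0\|/\|\rho_0\|$, and $1/\nu$.
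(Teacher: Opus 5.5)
\textbf{The gap is in your Step~2: as written it is circular and never closes.} Your high-block inequality gives
\[
\tfrac{d}{dt}\|P_{>N}F_k\|\le-\nu N^2\|P_{>N}F_k\|+|k|M\|F_k\|.
\]
To conclude from this that $\|P_{>N}F_k\|/\|F_k\|\le\frac12$ after some $T_0$, you must already know that $\|F_k\|$ does not decay faster than rate $\sim\nu N^2$. That is the statement being proved.

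Even granting the ratio bound, you correctly observe that the instantaneous rate $\nu\|\partial_yF_k\|^2/\|F_k\|^2$ still involves the uncontrolled quantity $\|\partial_yP_{>N}F_k\|$. Your proposed remedies (an averaged rate, a weighted functional $\|F\|^2+\epsilon\|P_{>N}F\|^2$) are only announced, not carried out.

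Step~1 cannot substitute for Step~2. Since $\operatorname{Re}\langle ikUF_k,Q_kF_k\rangle=0$, you may pair $UF_k$ with $\partial_y^2F_k+Q_kF_k$. This gives the sharper bound $\dot Q_k\le k^2M^2/(2\nu)$, but $Q_k$ still grows linearly, so you only get a Gaussian lower bound $e^{-ct^2}$, not an exponential one.

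Your opening reduction asking for a rate uniform in $k$ is also impossible: mode $k$ decays at rate at least $\nu k^2$ even when $U=0$. Step~3 rightly abandons it in favour of a single mode.

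\textbf{The idea that repairs your high/low route} is to compare the two blocks with each other, rather than with an assumed decay of the total. Set $L=\|P_{\le N}F_k\|^2$, $H=\|P_{>N}F_k\|^2$ and $Z=H-L$. Use three ingredients: the block energy identities, the spectral gap $2N+1$ across the cutoff, and $|T|\le|k|M\sqrt{LH}\le|k|MH$ on $\{Z\ge0\}$. Together they give, wherever $Z\ge0$,
\[
Z'\le-2\nu(k^2+N^2)Z+\bigl(4|k|M-2\nu(2N+1)\bigr)H\le0
\qquad\text{once } N\ge |k|M/\nu .
\]
Also take $N\ge J_k$, the initial half-energy cutoff of $g_k$, so that $Z(0)\le0$. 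Then $Z_+\equiv0$, i.e.\ $H\le L$ for all time. Consequently
\[
\tfrac12L'\ge-\nu(k^2+N^2)L-|k|M\sqrt{LH}\ge-\bigl(\nu(k^2+N^2)+|k|M\bigr)L,
\]
so $\|F_k(t)\|\ge 2^{-1/2}a_k e^{-(\nu(k^2+N^2)+|k|M)t}$. This rate is $O(1/\nu)$, and you never need to control $\|\partial_yP_{>N}F_k\|$. This invariant is exactly the all-time vertical retention step (STEP4) in the paper's proof of the mixing-scale theorem.

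\textbf{The paper proves the present theorem by a different mechanism.} It lifts $e^{\Lambda t}f_k$ to the whole time line and takes a Fourier transform in $t$. The spectral parameter $\Lambda=\nu(k^2+m^2+m+\frac12)$ sits halfway between heat eigenvalues, so every resolvent is at most $1/(\nu m)$ and the shear forcing costs $k^2M^2/(\nu^2m^2)$. The short-time bound on $[0,\delta_k]$ then converts the resulting inequality into an endpoint lower bound on $\|f_k(T)\|$.

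In both routes, the exact form $\|\rho_0\|e^{-c_2t}$ is obtained as you anticipated: a short-time estimate combined with absorbing $\log(\|\rho_0\|/a_k)$ into the exponent.
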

\begin{remark}
\noindent
\begin{enumerate}
\item The authors only edit the wording of this proof. To make it clear that the proof is done by AI, the authors keep all the format of AI's proof.
\item One may find that this proof is a bit tedious. The reason this proof is not as clear is the authors ask AI to optimize the estimate, and to use the lower regularity of $U$. To see a more reader-friendly proof with a relatively loose estimate and smooth $U$, one can check \cite{QEDp34}.
\item For the last example of the proof (STEP8), the authors asked AI to prove the bound it got is sharp in $\nu$. However, AI didn't fully understand the request. To keep it complete, we still keep its argument there.
\item From the classical literatures like \cite{bedrossian2020inviscid}, one could have for Couette flow $U=(y,0)$, one could expect the enhanced dissipation result
\[
\|\rho\|_{L^2}(t)\leq \|\rho_0\|_{L^2}e^{-c\nu^{1/3}t}.
\]
This theorem gives the lower bound for it in terms of $\nu$.
\end{enumerate}
\end{remark}
\begin{proof}
We work on the normalized torus, so that the Fourier functions $e^{ikx}e^{ily}$ are an orthonormal basis and
\[
\Delta(e^{ikx}e^{ily})=-(k^2+l^2)e^{ikx}e^{ily}.
\]
With the unnormalized Haar measure on $[0,2\pi]^2$, the same proof holds after changing only harmless universal constants. Throughout
\[
M:=\|U\|_{L^\infty_{t,y}},\qquad N:=\|\rho_0\|_{L^2_{x,y}}>0.
\]

\subsubsection*{STEP1: Admissible class and energy bound}

\noindent\\
\textbf{Claim:} Let $M:=\|U\|_{L^\infty_{t,y}}$. If $M<\infty$, then
\[
\|U\|_{L^\infty_tL^2_y}\le |\mathbb T|^{1/2}M<\infty.
\]
Hence $b(t,x,y)=(U(t,y),0)$ is a divergence-free drift in the class covered by the parabolic energy framework. Moreover, for every smooth mean-zero solution of the $\nu$-diffusive problem,
\[
\frac12\|\rho(t)\|_2^2+\nu\int_0^t\|\nabla\rho(s)\|_2^2\,ds
=\frac12\|\rho_0\|_2^2,
\qquad
\|\rho(t)\|_2\le e^{-\nu t}\|\rho_0\|_2.
\]
\noindent
\textbf{Proof:}
Since the torus has finite measure, $L^\infty_y\subset L^2_y$, and the displayed $L^\infty_tL^2_y$ estimate follows immediately. The drift $b=(U,0)$ is divergence-free in distributions because $U$ is independent of $x$. The standard parabolic framework gives existence, uniqueness, and the $L^2$ energy equality for such divergence-free drifts. For the details, one can check \cite{bonicatto2024weak}.

For smooth solutions, the energy identity is also obtained directly: multiply $\partial_t\rho+U\partial_x\rho=\nu\Delta\rho$ by $\overline{\rho}$, integrate over $\mathbb T^2$, and take real parts. Since $U$ is real and independent of $x$,
\[
\operatorname{Re}\int_{\mathbb T^2}U\,\partial_x\rho\,\overline{\rho}\,dxdy
=\frac12\int_{\mathbb T^2}U\,\partial_x|\rho|^2\,dxdy=0.
\]
Integration by parts gives
\[
\frac12\frac{d}{dt}\|\rho(t)\|_2^2=-\nu\|\nabla\rho(t)\|_2^2.
\]
After integration in time this is the displayed equality. The spatial mean is conserved, and the mean of $\rho_0$ is zero. Since every nonzero Fourier mode satisfies $k^2+l^2\ge1$,
\[
\|\nabla\rho(t)\|_2^2=\sum_{(k,l)\ne(0,0)}(k^2+l^2)|\rho_{k}^{l}(t)|^2
\ge \sum_{(k,l)\ne(0,0)}|\rho_{k}^{l}(t)|^2=\|\rho(t)\|_2^2.
\]
Thus $\frac{d}{dt}\|\rho(t)\|_2^2\le -2\nu\|\rho(t)\|_2^2$, and Gronwall's inequality gives $\|\rho(t)\|_2\le e^{-\nu t}\|\rho_0\|_2$.

\textbf{Dependencies:} Bonicatto-Ciampa-Crippa-parabolic-energy \cite{bonicatto2024weak}.

\subsubsection*{STEP2: Fourier mode reduction}
\noindent\\
\textbf{Claim:} Write
\[
\rho_0(x,y)=\sum_{k,l\in\mathbb Z}g_k^l e^{ikx}e^{ily},
\qquad
g_k(y):=\rho_{0,k}(y)=\sum_{l\in\mathbb Z}g_k^l e^{ily},
\qquad
a_k:=\|g_k\|_{L^2_y}.
\]
For each $k\in\mathbb Z$, the $x$-Fourier mode $f_k(t,y):=\rho_k(t,y)$ satisfies
\[
\partial_t f_k+\nu(k^2-\partial_{yy})f_k=-ikU(t,y)f_k,\qquad
f_k(0)=g_k,
\]
and
\[
\frac12\frac{d}{dt}\|f_k(t)\|_2^2
=-\nu\left(k^2\|f_k(t)\|_2^2+\|\partial_yf_k(t)\|_2^2\right).
\]
If $a_k=0$ for all $k\ne0$, then
\[
\rho(t,y)=\sum_{l\ne0}g_0^l e^{-\nu l^2t}e^{ily},
\qquad
\|\rho(t)\|_2^2=\sum_{l\ne0}|g_0^l|^2e^{-2\nu l^2t}.
\]
\noindent
\textbf{Proof:}
Expand
\[
\rho(t,x,y)=\sum_{k\in\mathbb Z}\rho_k(t,y)e^{ikx}.
\]
Because $U(t,y)$ is independent of $x$, multiplication by $U$ does not change the $x$-frequency. The coefficient of $e^{ikx}$ in the equation is
\[
\partial_t\rho_k+ikU(t,y)\rho_k
=\nu(\partial_{yy}-k^2)\rho_k,
\]
which is the stated equation for $f_k$.

Taking the $L^2_y$ inner product of this equation with $f_k$, taking real parts, and using real-valuedness of $U$, we obtain
\[
\operatorname{Re}\int_{\mathbb T}(-ikU)|f_k|^2\,dy=0
\]
and hence
\[
\frac12\frac{d}{dt}\|f_k(t)\|_2^2
=-\nu k^2\|f_k(t)\|_2^2-\nu\|\partial_yf_k(t)\|_2^2.
\]

If $a_k=0$ for all $k\ne0$, then the solution remains independent of $x$. The equation becomes $\partial_t\rho=\nu\partial_{yy}\rho$. Since the full spatial mean is zero, $g_0^0=0$. Solving the one-dimensional heat equation in Fourier series gives the displayed formula and the norm identity by Plancherel.

\textbf{Dependencies:} Direct Fourier calculation.

\subsubsection*{STEP3: Short-time lower bounds}
\noindent\\
\textbf{Claim:} Put
\[
N:=\|\rho_0\|_2>0,\qquad
L_0:=\nu\|\Delta\rho_0\|_2+M\|\partial_x\rho_0\|_2,\qquad
\beta_0:=\frac{2L_0}{N},\qquad
\delta_0:=\beta_0^{-1}.
\]
Then $L_0>0$, and for every $0\le t\le\delta_0$,
\[
\|\rho(t)\|_2\ge N e^{-\beta_0t}.
\]
For every $k\ne0$ with $a_k>0$, set
\[
A_k:=k^2-\partial_{yy},\qquad
L_k:=\nu\|A_kg_k\|_{L^2_y}+|k|M a_k,\qquad
\beta_k:=\frac{2L_k}{a_k},\qquad
\delta_k:=\beta_k^{-1}.
\]
Then for every $0\le t\le\delta_k$,
\[
\|f_k(t)\|_{L^2_y}\ge a_ke^{-\beta_kt},
\]
and for every $T>0$ and every $\Lambda\ge0$,
\[
\int_0^{\min\{T,\delta_k\}}e^{2\Lambda t}\|f_k(t)\|_{L^2_y}^2\,dt
\ge \frac{a_k^2}{4}\min\{T,\delta_k\}.
\]
\noindent
\textbf{Proof:}
First, $L_0>0$. If $L_0=0$, then $\Delta\rho_0=0$. On the torus the only harmonic functions are constants, and the mean-zero condition would force $\rho_0=0$, contrary to the hypothesis.

The mild formulation of the full equation is
\[
\rho(t)=e^{\nu t\Delta}\rho_0
-\int_0^t e^{\nu(t-s)\Delta}\bigl(U(s,\cdot)\partial_x\rho(s)\bigr)\,ds .
\]
The heat semigroup is an $L^2$-contraction, and the spectral theorem gives
\[
\|(e^{\nu t\Delta}-I)\rho_0\|_2\le \nu t\|\Delta\rho_0\|_2.
\]
Since $\partial_x\rho$ solves the same shear equation with initial datum $\partial_x\rho_0$, STEP1 applied to $\partial_x\rho$ gives
\[
\|\partial_x\rho(s)\|_2\le \|\partial_x\rho_0\|_2.
\]
Therefore
\[
\|\rho(t)-\rho_0\|_2
\le t\bigl(\nu\|\Delta\rho_0\|_2+M\|\partial_x\rho_0\|_2\bigr)=L_0t.
\]
For $0\le t\le\delta_0=N/(2L_0)$, let $r=L_0t/N$. Then $0\le r\le1/2$, and
\[
\|\rho(t)\|_2\ge N(1-r).
\]
For $0\le r\le1/2$, $1-r\ge e^{-2r}$. One way to verify this is to set $\phi(r)=\log(1-r)+2r$; then $\phi(0)=0$ and $\phi'(r)=2-(1-r)^{-1}\ge0$ on $[0,1/2]$. Thus
\[
\|\rho(t)\|_2\ge N e^{-2L_0t/N}=Ne^{-\beta_0t}.
\]

Now fix $k\ne0$ with $a_k>0$. The mode equation has the mild form
\[
f_k(t)=e^{-\nu tA_k}g_k
-ik\int_0^t e^{-\nu(t-s)A_k}\bigl(U(s,\cdot)f_k(s)\bigr)\,ds .
\]
The operator $A_k=k^2-\partial_{yy}$ is nonnegative self-adjoint, so
\[
\|(e^{-\nu tA_k}-I)g_k\|_2\le \nu t\|A_kg_k\|_2.
\]
The mode energy identity from STEP2 gives $\|f_k(s)\|_2\le a_k$. Consequently
\[
\|f_k(t)-g_k\|_2
\le t\bigl(\nu\|A_kg_k\|_2+|k|Ma_k\bigr)=L_kt.
\]
For $0\le t\le\delta_k=a_k/(2L_k)$, the same scalar inequality gives
\[
\|f_k(t)\|_2\ge a_k\left(1-\frac{L_kt}{a_k}\right)\ge a_ke^{-2L_kt/a_k}=a_ke^{-\beta_kt}.
\]
On the same interval, $\|f_k(t)\|_2\ge a_k/2$. Since $e^{2\Lambda t}\ge1$,
\[
\int_0^{\min\{T,\delta_k\}}e^{2\Lambda t}\|f_k(t)\|_2^2\,dt
\ge \int_0^{\min\{T,\delta_k\}}\frac{a_k^2}{4}\,dt
=\frac{a_k^2}{4}\min\{T,\delta_k\}.
\]

\textbf{Dependencies:} STEP1, STEP2.

\subsubsection*{STEP4: Finite-window lifted resolvent estimate (KEY STEP)}
\noindent\\
\textbf{Claim:} Fix $k\ne0$ with $a_k>0$. Let
\[
q_m:=m^2+m+\frac12,\qquad
\Lambda_{k,m}:=\nu(k^2+q_m),\qquad m\in\mathbb N,
\]
and define
\[
I_{k,m}(T):=\int_0^T e^{2\Lambda_{k,m}t}
\|f_k(t)\|_{L^2_y}^2\,dt.
\]
With $C_{\rm res}:=10^4$, set
\[
\mathcal A_{k,m}:=
C_{\rm res}\frac{k^2M^2}{\nu^2m^2},\qquad
\mathcal B_{k,m}:=
C_{\rm res}\frac1{\nu m},\qquad
\mathcal D_{k,m}:=
C_{\rm res}\frac1{\nu m}.
\]
Then for every $T>0$,
\[
I_{k,m}(T)
\le
\mathcal A_{k,m}I_{k,m}(T)
+\mathcal B_{k,m}a_k^2
+\mathcal D_{k,m}e^{2\Lambda_{k,m}T}
\|f_k(T)\|_{L^2_y}^2.
\]
\noindent
\textbf{Proof:}
\begin{key-original-step}
Fix $T>0$ and abbreviate $f=f_k$, $g=g_k$, $a=a_k$, $\Lambda=\Lambda_{k,m}$, and $q=q_m$. Define
\[
h(t,y)=
\begin{cases}
e^{\Lambda t}g(y),&t<0,\\
e^{\Lambda t}f(t,y),&0\le t\le T,\\
0,&t>T.
\end{cases}
\]
There is no jump at $t=0$, because $f(0)=g$. There is a jump of size $-e^{\Lambda T}f(T)$ at $t=T$. In distributions on $\mathbb R_t\times\mathbb T_y$,
\[
\partial_th+(\nu A_k-\Lambda)h
=F_1+F_2-H\delta_T,
\]
where
\[
F_1(t,y):=-ikU(t,y)h(t,y)\mathbf{1}_{(0,T)}(t),
\]
\[
F_2(t,y):=\nu A_kg(y)e^{\Lambda t}\mathbf{1}_{(-\infty,0)}(t),
\qquad
H(y):=e^{\Lambda T}f(T,y).
\]
Indeed, on $0<t<T$ this is just the mode equation multiplied by $e^{\Lambda t}$, while on $t<0$
\[
\partial_t(e^{\Lambda t}g)+(\nu A_k-\Lambda)e^{\Lambda t}g
=\nu A_kg\,e^{\Lambda t}.
\]

Use the unitary Fourier transform in time
\[
\widehat{\varphi}(\tau)=(2\pi)^{-1/2}\int_{\mathbb R}\varphi(t)e^{-it\tau}\,dt
\]
and the orthonormal Fourier series in $y$. For the $y$-frequency $l$, set
\[
\alpha_l:=\nu(l^2-q),\qquad R_l(\tau):=(i\tau+\alpha_l)^{-1}.
\]
Because $q=m^2+m+\frac12$, every integer $l$ satisfies
\[
|l^2-q|\ge m+\frac12\ge m.
\]
This follows by considering separately $|l|\le m$, where $q-l^2\ge q-m^2=m+\frac12$, and $|l|\ge m+1$, where $l^2-q\ge (m+1)^2-q=m+\frac12$. Hence
\[
|R_l(\tau)|\le \frac1{(\tau^2+\nu^2m^2)^{1/2}}\le\frac1{\nu m}.
\]

Taking the Fourier transform of the distributional equation gives, for each $l$,
\[
\widehat h^{\,l}(\tau)
=R_l(\tau)\widehat{F_1^{\,l}}(\tau)
+R_l(\tau)\frac{\nu(k^2+l^2)g^l}{(2\pi)^{1/2}(\Lambda-i\tau)}
-R_l(\tau)\frac{e^{-iT\tau}H^l}{(2\pi)^{1/2}}.
\]
The factors $(2\pi)^{-1/2}$ are at most $1$, and $|u+v+w|^2\le3(|u|^2+|v|^2+|w|^2)$. Therefore, by Plancherel,
\[
I_{k,m}(T)\le \|h\|_{L^2(\mathbb R_tL^2_y)}^2
\le 3(\mathcal F+\mathcal G+\mathcal H),
\]
where
\[
\mathcal F:=\int_{\mathbb R}\sum_l |R_l(\tau)|^2
|\widehat{F_1^{\,l}}(\tau)|^2\,d\tau,
\]
\[
\mathcal G:=\int_{\mathbb R}\sum_l |R_l(\tau)|^2
\frac{\nu^2(k^2+l^2)^2}{\Lambda^2+\tau^2}|g^l|^2\,d\tau,
\]
and
\[
\mathcal H:=\int_{\mathbb R}\sum_l |R_l(\tau)|^2|H^l|^2\,d\tau.
\]

For the forcing term, the uniform resolvent bound and Plancherel give
\[
\mathcal F
\le \frac1{\nu^2m^2}\int_{\mathbb R}\|F_1(t)\|_2^2\,dt.
\]
Since $F_1=-ikUh$ on $(0,T)$ and is zero otherwise,
\[
\int_{\mathbb R}\|F_1(t)\|_2^2\,dt
\le k^2M^2\int_0^T e^{2\Lambda t}\|f(t)\|_2^2\,dt
=k^2M^2I_{k,m}(T).
\]
Thus
\[
\mathcal F\le \frac{k^2M^2}{\nu^2m^2}I_{k,m}(T).
\]

For the initial extension term, put
\[
a_l:=\nu|l^2-q|,\qquad b:=\Lambda=\nu(k^2+q).
\]
Then $a_l\ge\nu m$ and $b\ge \nu q\ge\nu m$. Also
\[
\nu(k^2+l^2)=\nu(k^2+q)+\nu(l^2-q),
\]
so $\nu(k^2+l^2)\le a_l+b$. The elementary integral
\[
\int_{\mathbb R}\frac{d\tau}{(\tau^2+a_l^2)(\tau^2+b^2)}
=\frac{\pi}{a_lb(a_l+b)}
\]
is obtained by partial fractions if $a_l\ne b$, and by the limiting case if $a_l=b$. Hence
\[
\begin{aligned}
\int_{\mathbb R}|R_l(\tau)|^2
\frac{\nu^2(k^2+l^2)^2}{\Lambda^2+\tau^2}\,d\tau
&\le
(a_l+b)^2\frac{\pi}{a_lb(a_l+b)}\\
&=\pi\left(\frac1{a_l}+\frac1b\right)
\le \frac{2\pi}{\nu m}.
\end{aligned}
\]
Summing in $l$, we get
\[
\mathcal G\le \frac{2\pi}{\nu m}\sum_l|g^l|^2
=\frac{2\pi}{\nu m}a^2.
\]

For the terminal jump,
\[
\mathcal H
=\sum_l |H^l|^2\int_{\mathbb R}\frac{d\tau}{\tau^2+\nu^2(l^2-q)^2}
\le \frac{\pi}{\nu m}\|H\|_2^2
=\frac{\pi}{\nu m}e^{2\Lambda T}\|f(T)\|_2^2.
\]

Combining the three estimates and using $C_{\rm res}=10^4$, which is larger than the numerical constants $3$, $6\pi$, and $3\pi$, yields
\[
I_{k,m}(T)
\le
C_{\rm res}\frac{k^2M^2}{\nu^2m^2}I_{k,m}(T)
+C_{\rm res}\frac1{\nu m}a^2
+C_{\rm res}\frac1{\nu m}e^{2\Lambda T}\|f(T)\|_2^2.
\]
This is the claimed inequality.
\end{key-original-step}
\begin{heuristics}
The lifted function converts the evolution on $[0,T]$ into a resolvent equation on the whole time line. The value $\Lambda=\nu(k^2+q_m)$ places the spectral parameter halfway between two adjacent vertical heat eigenvalues, so every denominator has size at least $\nu m$. Since $U\in L^\infty$, the forcing is controlled directly in $L^2$, producing the factor $k^2M^2/(\nu^2m^2)$. The negative-time extension supplies a controlled initial-data term, and the only finite-window cost is the terminal jump at $T$.
\end{heuristics}

\textbf{Dependencies:} STEP2, STEP3.

\subsubsection*{STEP5: Global lower bound for one nonzero $x$-mode}
\noindent\\
\textbf{Claim:} For each $k\ne0$ with $a_k>0$, let $m_k$ be the least integer $m\ge1$ such that
\[
\mathcal A_{k,m}\le\frac14,\qquad
\mathcal B_{k,m}\le\frac{\delta_k}{16}.
\]
Define
\[
\Lambda_k:=\Lambda_{k,m_k},\qquad
D_k:=\mathcal D_{k,m_k},\qquad
\theta_k:=\min\left\{1,\left(\frac{\delta_k}{32D_k}\right)^{1/2}\right\},
\]
and
\[
\gamma_k:=
\max\left\{\beta_k,\,
\Lambda_k+\delta_k^{-1}\log(\theta_k^{-1})\right\}.
\]
Then for every $t\ge0$,
\[
\|f_k(t)\|_{L^2_y}\ge a_ke^{-\gamma_kt}.
\]
\noindent
\textbf{Proof:}
The integer $m_k$ exists because $\mathcal A_{k,m}\to0$ and $\mathcal B_{k,m}\to0$ as $m\to\infty$. Let $T\ge\delta_k$. STEP3 gives
\[
I_{k,m_k}(T)
\ge \int_0^{\delta_k}e^{2\Lambda_kt}\|f_k(t)\|_2^2\,dt
\ge \frac{a_k^2\delta_k}{4}.
\]
By STEP4 and the defining inequalities for $m_k$,
\[
(1-\mathcal A_{k,m_k})I_{k,m_k}(T)
\le \mathcal B_{k,m_k}a_k^2
+D_ke^{2\Lambda_kT}\|f_k(T)\|_2^2.
\]
Thus
\[
\begin{aligned}
D_ke^{2\Lambda_kT}\|f_k(T)\|_2^2
&\ge \frac34\cdot\frac{a_k^2\delta_k}{4}
-\frac{a_k^2\delta_k}{16}\\
&=\frac{a_k^2\delta_k}{8}
\ge \frac{a_k^2\delta_k}{32}.
\end{aligned}
\]
Therefore
\[
\|f_k(T)\|_2\ge
\left(\frac{\delta_k}{32D_k}\right)^{1/2}a_ke^{-\Lambda_kT}
\ge \theta_ka_ke^{-\Lambda_kT}.
\]

For $0\le t\le\delta_k$, STEP3 gives
\[
\|f_k(t)\|_2\ge a_ke^{-\beta_kt}\ge a_ke^{-\gamma_kt}.
\]
For $t\ge\delta_k$, the endpoint estimate gives
\[
\|f_k(t)\|_2\ge \theta_ka_ke^{-\Lambda_kt}.
\]
Since $0<\theta_k\le1$ and $t/\delta_k\ge1$,
\[
\theta_k=e^{-\log(\theta_k^{-1})}
\ge e^{-(t/\delta_k)\log(\theta_k^{-1})}.
\]
Hence
\[
\|f_k(t)\|_2
\ge a_k\exp\left[-\left(\Lambda_k+\delta_k^{-1}\log(\theta_k^{-1})\right)t\right]
\ge a_ke^{-\gamma_kt}.
\]

\textbf{Dependencies:} STEP3, STEP4.

\subsubsection*{STEP6: Explicit full-solution lower exponent}
\noindent\\
\textbf{Claim:} If there exists $k\ne0$ with $a_k>0$, define for every such $k$
\[
C_k:=
\max\left\{\beta_0,\,
\gamma_k+\delta_0^{-1}\log\left(\frac{N}{a_k}\right)\right\}.
\]
Since $\rho_0\in C^\infty$, the minimum
\[
C_x:=\min_{\substack{k\ne0\\a_k>0}}C_k
\]
is finite; choose $k_\ast$ attaining it and set $c_2:=C_x$. Then
\[
\|\rho(t)\|_{L^2_{x,y}}\ge N e^{-c_2t}\qquad(t\ge0).
\]
If $a_k=0$ for every $k\ne0$, then for each $l\ne0$ with $b_l:=|g_0^l|>0$, set
\[
C_l^{(0)}:=
\max\left\{\beta_0,\,
\nu l^2+\delta_0^{-1}\log\left(\frac{N}{b_l}\right)\right\},
\qquad
c_2:=\min_{\substack{l\ne0\\b_l>0}}C_l^{(0)}.
\]
Then again
\[
\|\rho(t)\|_{L^2_{x,y}}\ge N e^{-c_2t}\qquad(t\ge0).
\]
Together with STEP1,
\[
N e^{-c_2t}\le\|\rho(t)\|_2\le Ne^{-\nu t}\qquad(t\ge0).
\]
\noindent
\textbf{Proof:}
Assume first that some nonzero $x$-mode is present. For every $k\ne0$ with $a_k>0$, $a_k\le N$, so $\log(N/a_k)\ge0$. For $0\le t\le\delta_0$, STEP3 gives
\[
\|\rho(t)\|_2\ge Ne^{-\beta_0t}\ge Ne^{-C_kt}.
\]
For $t\ge\delta_0$, Plancherel and STEP5 give
\[
\|\rho(t)\|_2\ge\|f_k(t)\|_2\ge a_ke^{-\gamma_kt}.
\]
Since $t/\delta_0\ge1$,
\[
a_ke^{-\gamma_kt}
=N\exp\left[-\gamma_kt-\log\left(\frac{N}{a_k}\right)\right]
\ge
N\exp\left[-\left(\gamma_k+\delta_0^{-1}\log\left(\frac{N}{a_k}\right)\right)t\right]
\ge Ne^{-C_kt}.
\]
Thus each such $k$ gives an admissible exponent $C_k$.

It remains to justify that the displayed minimum is attained. If only finitely many $a_k$ are nonzero, this is immediate. Otherwise, since $\partial_x\rho_0\in L^2$,
\[
|k|a_k\le \|\partial_x\rho_0\|_2\qquad(k\ne0).
\]
Because at least one nonzero $x$-mode is present, $\|\partial_x\rho_0\|_2>0$, and therefore along all nonzero $a_k$,
\[
\log\left(\frac {N}{a_k}\right)\ge
\log\left(\frac{N|k|}{\|\partial_x\rho_0\|_2}\right)\to\infty
\qquad(|k|\to\infty).
\]
Hence $C_k\to\infty$ along the nonzero $x$-modes, so the minimum is achieved by some $k_\ast$. Taking $c_2=C_{k_\ast}$ proves the lower bound in the nonzero $x$-mode case.

Now assume $a_k=0$ for every $k\ne0$. STEP2 gives
\[
\|\rho(t)\|_2^2=\sum_{l\ne0}|g_0^l|^2e^{-2\nu l^2t}.
\]
For each $l\ne0$ with $b_l=|g_0^l|>0$,
\[
\|\rho(t)\|_2\ge b_le^{-\nu l^2t}.
\]
The same short-time argument as above handles $0\le t\le\delta_0$, while for $t\ge\delta_0$,
\[
b_le^{-\nu l^2t}
=N\exp\left[-\nu l^2t-\log\left(\frac N{b_l}\right)\right]
\ge
N\exp\left[-\left(\nu l^2+\delta_0^{-1}\log\left(\frac N{b_l}\right)\right)t\right].
\]
Thus each $C_l^{(0)}$ is admissible. Since $\nu l^2\to\infty$ as $|l|\to\infty$, the minimum over nonzero $b_l$ is attained and finite. The final upper bound is exactly STEP1.

\textbf{Dependencies:} STEP1, STEP2, STEP3, STEP5.

\subsubsection*{STEP7: Optimized $\nu$-dependence}
\noindent\\
\textbf{Claim:} For fixed $U\in L^\infty_{t,y}$, fixed smooth $\rho_0$, and a fixed admissible nonzero $x$-mode $k$ with $a_k>0$, there exists a finite constant $C(U,\rho_0,k)$, independent of $0<\nu\le1$, such that the constants in STEPS 3--6 satisfy
\[
m_k\le C(U,\rho_0,k)\nu^{-1},\qquad
\Lambda_k\le C(U,\rho_0,k)\nu^{-1},
\]
\[
\beta_0+\beta_k+\delta_k^{-1}\log(\theta_k^{-1})
\le C(U,\rho_0,k)(1+\nu^{-1}),
\]
and hence
\[
c_2\le C(U,\rho_0,k)\nu^{-1}.
\]
Thus the optimized negative-power $\nu$-scaling of the explicit $L^\infty$-based bound is
\[
f(\nu)=\nu^{-1}.
\]
In the $x$-independent heat branch with fixed $\rho_0$, the same formula gives $c_2\le C(\rho_0)\nu$; the $\nu^{-1}$ statement is the worst-case scaling needed for the full class when the datum may vary with $\nu$.
\noindent\\
\textbf{Proof:}
Fix $U$, $\rho_0$, and $k\ne0$ with $a_k>0$, and restrict to $0<\nu\le1$. Define the fixed finite quantities
\[
R_k:=\frac{\|A_kg_k\|_2}{a_k},\qquad
B_k^\ast:=2(R_k+|k|M),
\]
and
\[
B_0^\ast:=\frac{2(\|\Delta\rho_0\|_2+M\|\partial_x\rho_0\|_2)}{N}.
\]
Then
\[
\beta_k=\frac{2(\nu\|A_kg_k\|_2+|k|Ma_k)}{a_k}
\le B_k^\ast,
\qquad
\beta_0\le B_0^\ast.
\]

The defining inequalities for $m_k$ are
\[
C_{\rm res}\frac{k^2M^2}{\nu^2m^2}\le\frac14,
\qquad
C_{\rm res}\frac1{\nu m}\le\frac{\delta_k}{16}=\frac1{16\beta_k}.
\]
Both are guaranteed if
\[
m\ge \frac{2C_{\rm res}^{1/2}|k|M}{\nu},
\qquad
m\ge \frac{16C_{\rm res}\beta_k}{\nu}.
\]
Using $\beta_k\le B_k^\ast$, the least such integer satisfies
\[
m_k\le 1+\frac{2C_{\rm res}^{1/2}|k|M+16C_{\rm res}B_k^\ast}{\nu}
\le C(U,\rho_0,k)\nu^{-1}.
\]
Therefore
\[
\Lambda_k=\nu(k^2+m_k^2+m_k+\tfrac12)
\le C(U,\rho_0,k)\nu^{-1}.
\]

It remains to bound $\delta_k^{-1}\log(\theta_k^{-1})$. Since $\delta_k^{-1}=\beta_k\le B_k^\ast$, this term is zero when $\theta_k=1$. If $\theta_k<1$, then
\[
\log(\theta_k^{-1})
=\frac12\log\left(\frac{32D_k}{\delta_k}\right)
=\frac12\log\left(\frac{32C_{\rm res}\beta_k}{\nu m_k}\right).
\]
Because $m_k\ge1$, $\beta_k\le B_k^\ast$, and $0<\nu\le1$,
\[
\log(\theta_k^{-1})
\le \frac12\log\left(\frac{32C_{\rm res}B_k^\ast}{\nu}\right)_+
\le C(U,\rho_0,k)(1+\nu^{-1}),
\]
where $x_+=\max\{x,0\}$ and we used $\log(1/\nu)\le\nu^{-1}$. Multiplying by $\beta_k\le B_k^\ast$ gives
\[
\delta_k^{-1}\log(\theta_k^{-1})
\le C(U,\rho_0,k)(1+\nu^{-1}).
\]
Combining these bounds gives $\gamma_k\le C(U,\rho_0,k)\nu^{-1}$. Since the $C_k$ of STEP6 also contains only $\beta_0$ and the fixed factor $\delta_0^{-1}\log(N/a_k)=\beta_0\log(N/a_k)$, we get
\[
C_k\le C(U,\rho_0,k)\nu^{-1}.
\]
The optimized $c_2$ in STEP6 is no larger than this $C_k$, proving $c_2\le C(U,\rho_0,k)\nu^{-1}$.

If the datum is $x$-independent and fixed, then $\partial_x\rho_0=0$, so
\[
\beta_0=\frac{2\nu\|\Delta\rho_0\|_2}{N}=O_{\rho_0}(\nu).
\]
Choosing any fixed $l\ne0$ with $g_0^l\ne0$ in STEP6 gives
\[
C_l^{(0)}
=\max\left\{\beta_0,\nu l^2+\beta_0\log(N/|g_0^l|)\right\}
\le C(\rho_0)\nu.
\]
For the whole class, however, $\rho_0$ may depend on $\nu$, and the worst-case power supplied by the explicit formula is $\nu^{-1}$.

\textbf{Dependencies:} STEP3, STEP4, STEP5, STEP6.

\subsubsection*{STEP8: Sharpness construction (KEY STEP)}
\noindent\\
\textbf{Claim:} For each $0<\nu\le1$, set
\[
n_\nu:=\lceil\nu^{-1}\rceil,\qquad U_\nu(t,y):=0,\qquad
\rho_{0,\nu}(x,y):=\cos(n_\nu y).
\]
Then $U_\nu\in L^\infty_{t,y}$, $\rho_{0,\nu}\in C^\infty$, $\rho_{0,\nu}$ is mean-zero and nonzero, and the solution is
\[
\rho_\nu(t,x,y)=e^{-\nu n_\nu^2t}\cos(n_\nu y).
\]
Since
\[
\nu^{-1}\le \nu n_\nu^2\le4\nu^{-1},
\]
for every $t\ge0$,
\[
\|\rho_\nu(t)\|_2
=\|\rho_{0,\nu}\|_2e^{-\nu n_\nu^2t}
\le \|\rho_{0,\nu}\|_2e^{-\nu^{-1}t}.
\]
Hence the sharpness statement holds with
\[
f(\nu)=\nu^{-1},\qquad \widetilde C=1.
\]
\noindent
\textbf{Proof:}
\begin{key-original-step}
For $0<\nu\le1$, $n_\nu=\lceil\nu^{-1}\rceil$ is a positive integer. The function $\cos(n_\nu y)$ is smooth, nonzero, and has zero mean on $\mathbb T$. The choice $U_\nu=0$ belongs to every regularity class listed in the problem, including $L^\infty_{t,y}$.

With $U_\nu=0$, the equation is the heat equation
\[
\partial_t\rho=\nu\Delta\rho.
\]
Since $\rho_{0,\nu}$ is independent of $x$ and
\[
\partial_{yy}\cos(n_\nu y)=-n_\nu^2\cos(n_\nu y),
\]
the unique solution is
\[
\rho_\nu(t,x,y)=e^{-\nu n_\nu^2t}\cos(n_\nu y).
\]
Taking $L^2$ norms gives
\[
\|\rho_\nu(t)\|_2=\|\rho_{0,\nu}\|_2e^{-\nu n_\nu^2t}.
\]

Because $n_\nu=\lceil\nu^{-1}\rceil$,
\[
\nu^{-1}\le n_\nu\le\nu^{-1}+1\le2\nu^{-1}
\]
for $0<\nu\le1$. Multiplying the squared inequality by $\nu$ gives
\[
\nu^{-1}\le \nu n_\nu^2\le4\nu^{-1}.
\]
Therefore $\nu n_\nu^2\ge\nu^{-1}$, and for every $t\ge0$,
\[
e^{-\nu n_\nu^2t}\le e^{-\nu^{-1}t}.
\]
Thus
\[
\|\rho_\nu(t)\|_2
\le\|\rho_{0,\nu}\|_2e^{-\nu^{-1}t}.
\]
\end{key-original-step}
\begin{heuristics}
Diffusion damps a vertical Fourier mode $e^{iny}$ at the exact rate $\nu n^2$. The construction chooses a smooth datum for each fixed $\nu$, but places it at frequency $n\sim\nu^{-1}$. This makes the heat decay rate $\nu n^2$ exactly of order $\nu^{-1}$, so no general statement allowing $\rho_0$ to depend on $\nu$ can improve the negative power $f(\nu)=\nu^{-1}$.
\end{heuristics}

\textbf{Dependencies:} STEP2, STEP7.

\subsubsection*{GOAL: Main Result}
\noindent\\
\textbf{Claim:} The assertion in the problem statement holds.
\noindent\\
\textbf{Proof:}
STEPS 2--6 give an explicit admissible lower-decay exponent $c_2$. Namely, if a nonzero $x$-mode is present, define $c_2$ by
\[
c_2=\min_{\substack{k\ne0\\a_k>0}}
\max\left\{\beta_0,\,
\gamma_k+\delta_0^{-1}\log\left(\frac{N}{a_k}\right)\right\},
\]
with
\[
\beta_0=\frac{2(\nu\|\Delta\rho_0\|_2+M\|\partial_x\rho_0\|_2)}{N},
\quad
\delta_0=\beta_0^{-1},
\]
\[
\gamma_k=
\max\left\{\beta_k,\,
\Lambda_k+\delta_k^{-1}\log(\theta_k^{-1})\right\},
\quad
\beta_k=\frac{2(\nu\|A_kg_k\|_2+|k|Ma_k)}{a_k},
\quad
\delta_k=\beta_k^{-1},
\]
\[
\Lambda_k=\nu(k^2+m_k^2+m_k+\tfrac12),
\quad
\theta_k=\min\left\{1,\left(\frac{\delta_k}{32D_k}\right)^{1/2}\right\},
\quad
D_k=\frac{10^4}{\nu m_k},
\]
where $m_k$ is the least integer $m\ge1$ satisfying
\[
10^4\frac{k^2M^2}{\nu^2m^2}\le\frac14,
\qquad
10^4\frac1{\nu m}\le\frac{\delta_k}{16}.
\]
Then
\[
\|\rho(t)\|_2\ge \|\rho_0\|_2e^{-c_2t}\qquad(t\ge0).
\]

If no nonzero $x$-mode is present, define
\[
c_2=\min_{\substack{l\ne0\\|g_0^l|>0}}
\max\left\{\beta_0,\,
\nu l^2+\delta_0^{-1}\log\left(\frac{N}{|g_0^l|}\right)\right\}.
\]
STEP6 proves the same lower bound in that case. STEP1 gives the upper heat-scale bound
\[
\|\rho(t)\|_2\le\|\rho_0\|_2e^{-\nu t}.
\]
Thus part (a) is proved.
\noindent\\
STEP7 shows that, for fixed $U,\rho_0$ and any fixed admissible nonzero $x$-mode, this explicit $L^\infty$-based formula has worst-case negative-power scaling
\[
f(\nu)=\nu^{-1}.
\]
STEP8 constructs $U_\nu=0$ and $\rho_{0,\nu}=\cos(\lceil\nu^{-1}\rceil y)$ for which
\[
\|\rho_\nu(t)\|_2\le \|\rho_{0,\nu}\|_2e^{-\nu^{-1}t}.
\]
This proves the requested sharpness of the $\nu$-dependence with $\widetilde C=1$, completing part (b).

\textbf{Dependencies:} STEP1, STEP6, STEP7, STEP8.
\end{proof}

\begin{remark}
This proof was produced by QED, supported by a Codex coding agent integrated with ChatGPT 5.5. 
\end{remark}

Next, by the theorem above, we find the explicit estimate of the mixing scale for the advection diffusion equation \eqref{shearnu36}.

\begin{theorem}
Consider the advection-diffusion equation on $\mathbb{T}^2$:
\begin{equation}
    \partial_t \rho + U(t,y)\,\partial_x \rho = \nu \Delta \rho, \qquad \rho(0)=\rho_0 .
\end{equation}
Assume $\rho_0\neq 0$, $\rho_0\in C^\infty$, $\int \rho_0 =0$, $\nu>0$, $U$ is real-valued and
$\|U\|_{L^\infty_{t,y}} < \infty$, $0<\nu\ll1$. Then, we have
\[
\frac{\|\rho\|_{H^{-1}}}{\|\rho\|_{L^2}}\geq c_*.
\] 
For some $c_*=c_*(\rho_0,U,\nu)$ explicitly. 
\end{theorem}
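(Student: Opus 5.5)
The plan is to work mode by mode in the conserved $x$-frequency, as in Theorem~\ref{thm:ads}. First I show that each relevant mode keeps at least half of its energy at bounded $y$-frequency for all time. Then I show that only finitely many $x$-modes carry a non-negligible share of $\|\rho(t)\|_2^2$. Write $f_k(t,y)$ for the $x$-modes as in STEP2, $E_k(t)=\|f_k(t)\|_2^2$, and let $P_N$ (resp. $Q_N$) be the projection onto $y$-frequencies $|l|>N$ (resp. $|l|\le N$). Set $E_k^h=\|P_Nf_k\|_2^2$ and $r_k=E_k^h/E_k$. Since
\[
\|\rho\|_{H^{-1}}^2\ge\sum_k\frac{\|Q_Nf_k\|_2^2}{k^2+N^2},
\]
it suffices to bound $\|Q_{N_k}f_k\|_2^2\ge \tfrac12E_k$ for $|k|<K$, and to show that the modes $|k|\ge K$ carry at most a quarter of the total energy.

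\textbf{Step 1 (invariance of $r_k\le 1/2$).} From the mode equation of STEP2, using that $U$ is real so the advection term drops out of $E_k'$, I get
\[
E_k'=-2\nu\bigl(k^2E_k+\|\partial_yf_k\|_2^2\bigr),\qquad
(E_k^h)'\le -2\nu\bigl(k^2E_k^h+\|\partial_yP_Nf_k\|_2^2\bigr)+2|k|M\sqrt{E_k^hE_k}.
\]
The key point is to use the spectral gap between the two blocks: $\|\partial_yP_Nf\|^2\ge (N+1)^2E^h$, while $\|\partial_yQ_Nf\|^2\le N^2(E-E^h)$. The $k^2$ terms cancel in $r_k'=(E_k^h)'/E_k-r_kE_k'/E_k$, and the remaining terms give
\[
r_k'\le -2\nu(2N+1)\,r_k(1-r_k)+2|k|M\sqrt{r_k}.
\]
At $r_k=\tfrac12$ the right side is $-\tfrac{\nu(2N+1)}2+\sqrt2|k|M<0$ as soon as $N\ge N_k:=\lceil 2|k|M/\nu\rceil$. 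Since $\rho_0\in C^\infty$, I can also enlarge $N_k$ (explicitly via $\|\partial_y g_k\|_2/a_k$) so that $r_k(0)\le\tfrac12$. Hence $r_k(t)\le\tfrac12$ for all $t$. I expect this differential inequality for the ratio to be the main obstacle. A naive estimate without the gap $2N+1$ makes the diffusive terms cancel exactly, so the choice of the two thresholds $N$ and $N+1$ is essential. I would make it rigorous for smooth $U$ and pass to $U\in L^\infty$ by the weak-solution framework of STEP1, or argue directly with absolutely continuous $E_k,E_k^h$.

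\textbf{Step 2 (tail in $k$).} Each mode decays at least at the heat rate, $E_k(t)\le a_k^2e^{-2\nu k^2t}$, by the mode energy identity of STEP2. Theorem~\ref{thm:ads} gives $\|\rho(t)\|_2^2\ge N_0^2e^{-2c_2t}$ with $N_0=\|\rho_0\|_2$. I then choose $K$ so that $\nu K^2\ge c_2$ and $\sum_{|k|\ge K}a_k^2\le N_0^2/4$; this is possible and explicit because $\sum_k k^2a_k^2=\|\partial_x\rho_0\|_2^2$. It follows that
\[
\sum_{|k|\ge K}E_k(t)\le \tfrac14N_0^2e^{-2c_2t}\le\tfrac14\|\rho(t)\|_2^2,
\]
so the modes $|k|<K$ carry at least $\tfrac34\|\rho(t)\|_2^2$.

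\textbf{Step 3 (conclusion).} The $k=0$ mode is pure heat flow, so its ratio $r_0$ is nonincreasing and Step 1 applies to it trivially. Setting $\bar N:=\max_{|k|<K}N_k$, Step 1 gives
\[
\|\rho(t)\|_{H^{-1}}^2\ge\sum_{|k|<K}\frac{E_k(t)}{2(K^2+\bar N^2)}\ge \frac{3}{8(K^2+\bar N^2)}\|\rho(t)\|_2^2 .
\]
This yields
\[
c_*=\Bigl(\tfrac{3}{8(K^2+\bar N^2)}\Bigr)^{1/2},
\]
which is explicit in $\rho_0$, $M$ and $\nu$, with $\bar N\sim KM/\nu$ in the regime $0<\nu\ll1$.
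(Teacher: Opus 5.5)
Your proposal is correct and follows essentially the same route as the paper. The $x$-tail is handled exactly as in STEP3 of the paper's proof: the heat-rate decay $E_k\le a_k^2e^{-2\nu k^2t}$ is played against the $L^2$ lower bound $e^{-c_2t}$ from Theorem~\ref{thm:ads}. The vertical retention is the same spectral-gap barrier as STEP4, with the gap $2N+1$ beating the shear transfer of size $|k|M$.

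The differences are cosmetic. You run the barrier on the ratio $r_k=E_k^h/E_k$ rather than on $Z=H-L$; this needs $E_k>0$, which follows from STEP5 in the proof of Theorem~\ref{thm:ads}, or you can stop once $E_k$ vanishes. Your cruder transfer bound $|k|M\sqrt{E_k^hE_k}$ costs a factor $2$ in the cutoff $N_k\ge 2|k|M/\nu$. Finally, you keep a $1/4$ tail instead of $1/2$.
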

\begin{remark}
\noindent
\begin{enumerate}
\item The authors only edit the wording of this proof, and a few typos. Because of the notation, the AI originally produced the proof for the inhomogeneous $H^{-1}$ norm. Only a few constants change by a factor of $\sqrt{2}$ when converting to the homogeneous norm. To make it clear that the proof is done by AI, the authors keep all the format of AI's proof.
\item One may find that this proof is a bit tedious. The reason this proof is not as clear is the authors ask AI to optimize the estimate, and to use the lower regularity of $U$. To see a more reader-friendly proof with a relatively loose estimate and smooth $U$, one can check \cite{QED2ndp4}, which is a self-contain proof only assuming Theorem \ref{thm:ads}.
\item The last example of $U,\rho_0$ of this theorem shows that the scaling of $c_*$ is not very clear. One may expect a better physical explanation for the future work. 
\item The proof of the existence of such $c_*$ (which can be seen as Batchelor Scale as discussed in \cite{miles2018diffusion}) can get directly from exponential lower bound as in \eqref{eq:L2}, but regardless the explicit expression of the rate or not.
\end{enumerate}
\end{remark}
\begin{proof}
We use the normalized Fourier convention on $\mathbb T^2$, so that Parseval has no extra volume factors. The same ratios are obtained with the unnormalized convention because the common $L^2$ volume factor cancels. Set
\[
M:=\|U\|_{L^\infty_{t,y}},\qquad \mathcal N:=\|\rho_0\|_{L^2}>0.
\]
All norms below are taken on the appropriate torus unless the variables are displayed.

\subsubsection*{STEP1: Fourier decomposition and mode identities}
\noindent\\
\textbf{Claim:} Let
\[
  \rho(t,x,y)=\sum_{k\in\mathbb Z}f_k(t,y)e^{ikx},\qquad
  f_k(t,y)=\sum_{l\in\mathbb Z}\widehat f_k(t,l)e^{ily},
\]
with $g_k^l=\widehat f_k(0,l)$, $g_k(y)=\sum_lg_k^le^{ily}$,
\[
  E_k(t)=\|f_k(t)\|_{L^2_y}^2,\qquad
  a_k=E_k(0)^{1/2},\qquad
  \mathcal N=\|\rho_0\|_2=\left(\sum_k a_k^2\right)^{1/2},
\]
and
\[
  F_k(t)=
  \begin{cases}
  \displaystyle\sum_{l\in\mathbb Z}\frac{|\widehat f_k(t,l)|^2}{k^2+l^2},&k\ne0,\\[1ex]
  \displaystyle\sum_{l\ne0}\frac{|\widehat f_0(t,l)|^2}{l^2},&k=0.
  \end{cases}
\]
Then
\[
  \|\rho(t)\|_2^2=\sum_kE_k(t),\qquad
  \|\rho(t)\|_{\dot H^{-1}}^2=\sum_kF_k(t),
\]
and each mode satisfies
\[
  \partial_t f_k+\nu(k^2-\partial_{yy})f_k=-ikU(t,y)f_k,
\]
\[
  \frac12E_k'(t)=-\nu\left(k^2E_k(t)+\|\partial_yf_k(t)\|_2^2\right).
\]
If $P_{\le N}$ is projection to $|l|\le N$, $L_{k,N}(t)=\|P_{\le N}f_k(t)\|_2^2$, and
$H_{k,N}(t)=E_k(t)-L_{k,N}(t)$, then the low-high transfer term satisfies
\[
  |T_{k,N}(t)|\le |k|\,M\,L_{k,N}(t)^{1/2}H_{k,N}(t)^{1/2},
  \qquad M=\|U\|_{L^\infty_{t,y}}.
\]
\noindent
\textbf{Proof:}
On every finite interval $[0,T]$, the drift $b(t,x,y)=(U(t,y),0)$ is divergence-free and belongs to
$L^1([0,T];L^2(\mathbb T^2))\cap L^2([0,T];L^2(\mathbb T^2))$.
Thus the parabolic solution class is available and unique for bounded shear drifts. See \cite{bonicatto2024weak}. 

The Fourier identities below are obtained first for smooth $U$. For merely bounded $U$, take smooth $U^n\to U$ in $L^2([0,T]\times\mathbb T)$ with $\|U^n\|_\infty\le M$, pass to the parabolic limit using uniqueness, and use the boundedness of the finite Fourier projections on $L^2_y$ to pass the projected identities and inequalities to the limit on $[0,T]$. Since $T$ is arbitrary, the identities hold on every finite time interval.

Since $U$ is independent of $x$, multiplication by $U$ does not couple different $x$-Fourier modes. Taking the coefficient of $e^{ikx}$ in $\partial_t\rho+U\partial_x\rho=\nu(\partial_{xx}+\partial_{yy})\rho$ gives
\[
\partial_t f_k+ikUf_k=\nu(\partial_{yy}-k^2)f_k,
\]
which is the stated mode equation.

Parseval gives
\[
\|\rho(t)\|_2^2=\sum_{k,l}|\widehat f_k(t,l)|^2=\sum_kE_k(t).
\]
The full mean is conserved: integrating the equation over $\mathbb T^2$ gives
\[
\frac{d}{dt}\int_{\mathbb T^2}\rho(t,x,y)\,dx\,dy=0,
\]
because $\int U(t,y)\partial_x\rho\,dxdy=0$ by periodicity in $x$, and $\int\Delta\rho\,dxdy=0$. Since $\int\rho_0=0$, the $(0,0)$ coefficient is absent for every $t\ge0$. Therefore the homogeneous negative Sobolev norm is
\[
\|\rho(t)\|_{\dot H^{-1}}^2
=\sum_{(k,l)\ne(0,0)}\frac{|\widehat f_k(t,l)|^2}{k^2+l^2}
=\sum_kF_k(t).
\]

Taking the real part of the $L^2_y$ inner product of the mode equation with $f_k$ gives
\[
\frac12\frac{d}{dt}\|f_k\|_2^2+\nu k^2\|f_k\|_2^2+\nu\|\partial_yf_k\|_2^2
=\operatorname{Re}\int_{\mathbb T}(-ikU)|f_k|^2\,dy.
\]
The last term is zero because $U$ is real-valued and $-ik\int U|f_k|^2$ is purely imaginary. This proves the mode energy identity.

Now write $f^L=P_{\le N}f_k$ and $f^H=P_{>N}f_k$. The transfer term appearing in the low/high energy identities is
\[
T_{k,N}(t):=\operatorname{Re}\langle -ik\,U(t,\cdot)f^L(t),f^H(t)\rangle_{L^2_y}.
\]
By Cauchy's inequality and the $L^\infty$ bound on $U$,
\[
|T_{k,N}(t)|
\le |k|\,M\,\|f^L(t)\|_2\|f^H(t)\|_2
=|k|\,M\,L_{k,N}(t)^{1/2}H_{k,N}(t)^{1/2}.
\]

\textbf{Dependencies:} \cite{bonicatto2024weak}.

\subsubsection*{STEP2: The $L^2$ lower exponent and mode upper bounds}
\noindent\\
\textbf{Claim:} Let $c_2$ be the explicit admissible exponent from Theorem \ref{thm:ads}. Then
\[
  \|\rho(t)\|_2\ge \mathcal N e^{-c_2t}\qquad(t\ge0).
\]
In addition, for every $k\in\mathbb Z$,
\[
  E_k(t)\le a_k^2e^{-2\nu k^2t}\qquad(t\ge0).
\]
\noindent
\textbf{Proof:}
For explicitness, the exponent $c_2$ used below is the following one from Theorem \ref{thm:ads}. For $k\ne0$ with $a_k>0$, set
\[
A_k=k^2-\partial_{yy},\qquad
L_k=\nu\|A_kg_k\|_2+|k|Ma_k,\qquad
\beta_k=\frac{2L_k}{a_k},\qquad \delta_k=\beta_k^{-1}.
\]
Let $C_{\rm res}=10^4$. Let $m_k$ be the least integer $m\ge1$ satisfying
\[
C_{\rm res}\frac{k^2M^2}{\nu^2m^2}\le\frac14,\qquad
C_{\rm res}\frac1{\nu m}\le\frac{\delta_k}{16}.
\]
Define
\[
\Lambda_k=\nu\left(k^2+m_k^2+m_k+\frac12\right),\qquad
D_k=\frac{C_{\rm res}}{\nu m_k},\qquad
\theta_k=\min\left\{1,\left(\frac{\delta_k}{32D_k}\right)^{1/2}\right\},
\]
\[
\gamma_k=\max\left\{\beta_k,\,
\Lambda_k+\delta_k^{-1}\log(\theta_k^{-1})\right\}.
\]
Also set
\[
L_0=\nu\|\Delta\rho_0\|_2+M\|\partial_x\rho_0\|_2,\qquad
\beta_0=\frac{2L_0}{\mathcal N},\qquad \delta_0=\beta_0^{-1}.
\]
If some nonzero $x$-mode is present, define
\[
c_2=
\min_{\substack{k\ne0\\a_k>0}}
\max\left\{\beta_0,\gamma_k+\delta_0^{-1}\log\left(\frac{\mathcal N}{a_k}\right)\right\}.
\]
If $a_k=0$ for all $k\ne0$, define instead
\[
c_2=
\min_{\substack{l\ne0\\|g_0^l|>0}}
\max\left\{\beta_0,\nu l^2+\delta_0^{-1}\log\left(\frac{\mathcal N}{|g_0^l|}\right)\right\}.
\]
The local proof establishes exactly that this $c_2$ is admissible:
\[
\|\rho(t)\|_2\ge\mathcal N e^{-c_2t}\qquad(t\ge0).
\]

For the mode upper bound, the energy identity from STEP1 gives
\[
\frac12E_k'(t)
=-\nu\left(k^2E_k(t)+\|\partial_yf_k(t)\|_2^2\right)
\le-\nu k^2E_k(t).
\]
Thus $E_k'(t)\le-2\nu k^2E_k(t)$. Gronwall's inequality yields
\[
E_k(t)\le E_k(0)e^{-2\nu k^2t}=a_k^2e^{-2\nu k^2t}.
\]

\textbf{Dependencies:} Theorem \ref{thm:ads}, STEP1.

\subsubsection*{STEP3: All-time finite $x$-frequency retention}

\textbf{Claim:} Define
\[
  K_c=\left\lceil\left(\frac{2c_2}{\nu}\right)^{1/2}\right\rceil,\qquad
  K_0=\min\left\{J\in\mathbb N_0:\ \sum_{|k|>J}a_k^2\le\frac{\mathcal N^2}{2}\right\},
\]
and set
\[
  K=\max\{K_c,K_0\},\qquad
  \mathcal K=\{k\in\mathbb Z:\ |k|\le K,\ a_k>0\}.
\]
Then $K<\infty$, $\mathcal K\ne\varnothing$, and for every $t\ge0$,
\[
  \sum_{|k|>K}E_k(t)\le \frac12\|\rho(t)\|_2^2,
  \qquad
  \sum_{k\in\mathcal K}E_k(t)\ge \frac12\|\rho(t)\|_2^2.
\]

\textbf{Proof:}
Since $\sum_ka_k^2=\mathcal N^2<\infty$, the tails $\sum_{|k|>J}a_k^2$ tend to zero as $J\to\infty$. Hence $K_0<\infty$, and $K_c<\infty$ because $c_2<\infty$ and $\nu>0$. Thus $K<\infty$.

For $|k|>K$, one has $|k|\ge K+1$. By STEP2,
\[
\sum_{|k|>K}E_k(t)
\le e^{-2\nu(K+1)^2t}\sum_{|k|>K}a_k^2.
\]
Because $K\ge K_0$,
\[
\sum_{|k|>K}a_k^2\le \sum_{|k|>K_0}a_k^2\le\frac{\mathcal N^2}{2}.
\]
Because $K\ge K_c$,
\[
\nu(K+1)^2>c_2.
\]
Therefore, using the $L^2$ lower bound from STEP2,
\[
\sum_{|k|>K}E_k(t)
\le\frac{\mathcal N^2}{2}e^{-2\nu(K+1)^2t}
\le\frac{\mathcal N^2}{2}e^{-2c_2t}
\le \frac12\|\rho(t)\|_2^2.
\]

If $\mathcal K$ were empty, then $a_k=0$ for every $|k|\le K$, so at $t=0$
\[
\mathcal N^2=\sum_{|k|>K}a_k^2\le\frac{\mathcal N^2}{2},
\]
contradicting $\mathcal N>0$. Thus $\mathcal K\ne\varnothing$. Finally, modes with $a_k=0$ have zero initial data; by uniqueness for the linear mode equation, $E_k(t)=0$ for all $t\ge0$. Hence
\[
\sum_{k\in\mathcal K}E_k(t)=\sum_{|k|\le K}E_k(t)
=\|\rho(t)\|_2^2-\sum_{|k|>K}E_k(t)
\ge\frac12\|\rho(t)\|_2^2.
\]

\textbf{Dependencies:} STEP2.

\subsubsection*{STEP4: All-time vertical finite-window retention (KEY STEP)}
\noindent\\
\textbf{Claim:} For every $k\in\mathcal K$, define an initial vertical half-energy cutoff
\[
  J_k=\min\left\{J\in\mathbb N:\ \sum_{|l|>J}|g_k^l|^2\le\frac{a_k^2}{2}\right\}.
\]
For $k=0$, put $N_0=J_0$. For $k\ne0$, put
\[
  N_k=\max\left\{J_k,\left\lceil\frac{|k|M}{\nu}\right\rceil,1\right\}.
\]
Then $J_k,N_k<\infty$, and for every $t\ge0$:
\[
  L_{0,N_0}(t)\ge\frac12E_0(t),\qquad
  F_0(t)\ge\frac{1}{2N_0^2}E_0(t)
\]
whenever $0\in\mathcal K$, and
\[
  L_{k,N_k}(t)\ge\frac12E_k(t),\qquad
  F_k(t)\ge\frac{1}{2(k^2+N_k^2)}E_k(t)
\]
for every $k\in\mathcal K$ with $k\ne0$.
\noindent\\
\textbf{Proof:}
\begin{key-original-step}
Fix $k\in\mathcal K$. Since $g_k\in L^2(\mathbb T_y)$, its Fourier tail tends to zero. Because $a_k>0$, there exists $J\in\mathbb N$ for which
\[
\sum_{|l|>J}|g_k^l|^2\le\frac{a_k^2}{2}.
\]
Thus $J_k<\infty$, and $N_k<\infty$ follows immediately from the definition.

First consider $k=0$. Since the full spatial mean of $\rho_0$ is zero, $g_0^0=0$, and the $k=0$ mode solves the one-dimensional heat equation
\[
\partial_t f_0=\nu\partial_{yy}f_0,\qquad
\widehat f_0(t,l)=e^{-\nu l^2t}g_0^l.
\]
Set $N=N_0=J_0$. Then
\[
H_{0,N}(t)=\sum_{|l|>N}|g_0^l|^2e^{-2\nu l^2t}
\le e^{-2\nu(N+1)^2t}\sum_{|l|>N}|g_0^l|^2,
\]
while, because $g_0^0=0$,
\[
L_{0,N}(t)=\sum_{0<|l|\le N}|g_0^l|^2e^{-2\nu l^2t}
\ge e^{-2\nu N^2t}\sum_{0<|l|\le N}|g_0^l|^2.
\]
The definition of $J_0=N$ gives $H_{0,N}(0)\le a_0^2/2$, hence
\[
L_{0,N}(0)=a_0^2-H_{0,N}(0)\ge a_0^2/2\ge H_{0,N}(0).
\]
Therefore
\[
H_{0,N}(t)\le e^{-2\nu(N+1)^2t}H_{0,N}(0)
\le e^{-2\nu N^2t}L_{0,N}(0)
\le L_{0,N}(t).
\]
Since $E_0=L_{0,N}+H_{0,N}$, this proves
\[
L_{0,N_0}(t)\ge\frac12E_0(t).
\]
Moreover,
\[
F_0(t)
=\sum_{l\ne0}\frac{|\widehat f_0(t,l)|^2}{l^2}
\ge \sum_{0<|l|\le N_0}\frac{|\widehat f_0(t,l)|^2}{l^2}
\ge \frac{1}{N_0^2}L_{0,N_0}(t)
\ge \frac{1}{2N_0^2}E_0(t).
\]

Now fix $k\in\mathcal K$ with $k\ne0$, and write $N=N_k$, $L=L_{k,N}$, $H=H_{k,N}$. At $t=0$,
\[
H(0)=\sum_{|l|>N}|g_k^l|^2
\le\sum_{|l|>J_k}|g_k^l|^2
\le\frac{a_k^2}{2},
\]
so $L(0)=a_k^2-H(0)\ge H(0)$.

Let $f^L=P_{\le N}f_k$, $f^H=P_{>N}f_k$, and
\[
Z(t):=H(t)-L(t).
\]
The low/high energy identities obtained by projecting the mode equation are
\[
\frac12H'(t)+\nu\langle (k^2-\partial_{yy})f^H(t),f^H(t)\rangle=T(t),
\]
\[
\frac12L'(t)+\nu\langle (k^2-\partial_{yy})f^L(t),f^L(t)\rangle=-T(t),
\]
where
\[
T(t)=\operatorname{Re}\langle -ikU(t,\cdot)f^L(t),f^H(t)\rangle.
\]
These identities hold for a.e. $t$, and $L,H,Z$ are absolutely continuous. Subtracting the low identity from the high identity gives
\[
Z'(t)=-2\nu\Bigl(
\langle (k^2-\partial_{yy})f^H,f^H\rangle
-\langle (k^2-\partial_{yy})f^L,f^L\rangle
\Bigr)+4T(t)
\]
for a.e. $t$.

On the set where $Z(t)\ge0$, one has $L(t)\le H(t)$. The high block contains only $|l|\ge N+1$, while the low block contains only $|l|\le N$. Hence
\[
\langle (k^2-\partial_{yy})f^H,f^H\rangle
\ge (k^2+(N+1)^2)H,
\]
\[
\langle (k^2-\partial_{yy})f^L,f^L\rangle
\le (k^2+N^2)L.
\]
Therefore
\[
\begin{aligned}
&\langle (k^2-\partial_{yy})f^H,f^H\rangle
-\langle (k^2-\partial_{yy})f^L,f^L\rangle\\
&\qquad\ge (k^2+N^2)(H-L)+(2N+1)H
=(k^2+N^2)Z+(2N+1)H.
\end{aligned}
\]
Also, by STEP1,
\[
|T(t)|\le |k|M\sqrt{L(t)H(t)}\le |k|M\,H(t)
\]
on the same set $Z(t)\ge0$. Combining the last two estimates gives, for a.e. $t$ such that $Z(t)\ge0$,
\[
Z'(t)\le
-2\nu(k^2+N^2)Z(t)+\bigl[-2\nu(2N+1)+4|k|M\bigr]H(t).
\]
Since $N=N_k\ge |k|M/\nu$,
\[
-2\nu(2N+1)+4|k|M
=-4\nu N-2\nu+4|k|M
\le -2\nu.
\]
Thus, for a.e. $t$ with $Z(t)\ge0$,
\[
Z'(t)\le -2\nu(k^2+N^2)Z(t)-2\nu H(t)\le0.
\]

Let $Z_+(t)=\max\{Z(t),0\}$. Since $Z$ is absolutely continuous,
\[
\frac{d}{dt}Z_+(t)=\mathbf 1_{\{Z(t)>0\}}Z'(t)
\]
for a.e. $t$. The preceding inequality implies $(Z_+)'(t)\le0$ for a.e. $t$. As $Z_+(0)=0$, it follows that $Z_+(t)=0$ for every $t\ge0$. Hence $H(t)\le L(t)$ for all $t\ge0$, and therefore
\[
L_{k,N_k}(t)\ge\frac12E_k(t).
\]
Finally,
\[
F_k(t)
\ge \sum_{|l|\le N_k}\frac{|\widehat f_k(t,l)|^2}{k^2+l^2}
\ge\frac{1}{k^2+N_k^2}L_{k,N_k}(t)
\ge\frac{1}{2(k^2+N_k^2)}E_k(t).
\]
\end{key-original-step}
\begin{heuristics}
The low/high boundary is stable because diffusion sees a spectral gap $2N+1$ across the vertical cutoff. If the high block is at least as large as the low block, then the high block dissipates faster by at least $\nu(2N+1)H$, while the shear can transfer energy across the cutoff at rate no more than $|k|M\sqrt{LH}\le |k|MH$. Choosing $N\ge |k|M/\nu$ makes this net flux point from high dominance back toward low dominance. The positive-part argument turns this boundary computation into a genuine all-time invariant.
\end{heuristics}

\textbf{Dependencies:} STEP1, STEP3.

\subsubsection*{STEP5: The explicit all-time $H^{-1}/L^2$ lower bound}
\noindent\\
\textbf{Claim:} Define
\[
  R_*^2=
  \max\left(
  \{k^2+N_k^2:\ k\in\mathcal K,\ k\ne0\}
  \cup
  \{N_0^2:\ 0\in\mathcal K\}
  \right),
\]
with the $N_0$-term omitted if $0\notin\mathcal K$. Then $0<R_*<\infty$ and for every
$t\ge0$,
\[
  \frac{\|\rho(t)\|_{\dot H^{-1}}}{\|\rho(t)\|_{L^2}}\ge \frac{1}{2R_*}.
\]
Consequently, for the homogeneous convention
\[
  \|h\|_{H^{-1}}^2=\sum_{(k,l)\ne(0,0)}\frac{|\widehat h(k,l)|^2}{k^2+l^2},
\]
one has for every $t\ge0$
\[
  \frac{\|\rho(t)\|_{H^{-1}}}{\|\rho(t)\|_{L^2}}
  \ge c_*:=\frac{1}{2\,R_*}.
\]
\noindent
\textbf{Proof:}
The set $\mathcal K$ is finite and nonempty by STEP3. For $k\in\mathcal K$, the corresponding quantity $N_0^2$ or $k^2+N_k^2$ is a positive finite number: $N_0=J_0\ge1$, and for $k\ne0$ the term $k^2+N_k^2\ge1$. Hence $0<R_*<\infty$.

By STEP4, for every $k\in\mathcal K$,
\[
F_k(t)\ge \frac{1}{2R_*^2}E_k(t).
\]
Summing over $k\in\mathcal K$ and then using STEP3 gives
\[
\|\rho(t)\|_{\dot H^{-1}}^2
=\sum_kF_k(t)
\ge\sum_{k\in\mathcal K}F_k(t)
\ge\frac{1}{2R_*^2}\sum_{k\in\mathcal K}E_k(t)
\ge\frac{1}{4R_*^2}\|\rho(t)\|_2^2.
\]
Taking square roots proves
\[
\frac{\|\rho(t)\|_{\dot H^{-1}}}{\|\rho(t)\|_2}\ge\frac1{2R_*}.
\]

\textbf{Dependencies:} STEP3, STEP4.

\subsubsection*{STEP6: Explicit $\nu$-dependence of the admissible exponent (KEY STEP)}
\noindent\\
\textbf{Claim:} Let
\[
  J_*=\max_{k\in\mathcal K}J_k.
\]
Then
\[
  K=\max\left\{
  \left\lceil\left(\frac{2c_2}{\nu}\right)^{1/2}\right\rceil,
  \min\left\{J\in\mathbb N_0:\ \sum_{|k|>J}a_k^2\le\frac{\mathcal N^2}{2}\right\}
  \right\},
\]
and
\[
  R_*\le 1+K+J_*+\frac{KM}{\nu}.
\]
Hence the explicit all-time admissible exponent satisfies
\[
  c_*=\frac{1}{2\,R_*}
  \ge
  \frac{1}{2\left(1+K+J_*+KM/\nu\right)}.
\]
More explicitly, for $0<\nu\le1$, if
\[
  C_K:=1+\sqrt{2C_2}+C_x,\qquad
  M=0,\qquad c_2\le C_2\nu^{-1},\qquad
  K_0\le C_x\nu^{-1},\qquad J_*\le C_y\nu^{-1},
\]
then
\[
  K\le C_K\nu^{-1},\qquad
  c_*\ge \frac{\nu}{2(1+C_K+C_y)}.
\]
If instead
\[
  M\le C_M,\qquad c_2\le C_2\nu^{-1},\qquad
  K_0\le C_x\nu^{-1},\qquad J_*\le C_y\nu^{-1},
\]
with the same $C_K$, then
\[
  K\le C_K\nu^{-1},\qquad
  c_*\ge \frac{\nu^2}{2(1+C_K+C_y+C_KC_M)}.
\]
\noindent
\textbf{Proof:}
\begin{key-original-step}
The displayed formula for $K$ is exactly the definition from STEP3, with $K_0$ written out. Since $\mathcal K$ is finite and $J_k<\infty$ for every $k\in\mathcal K$, $J_*<\infty$.

Fix $k\in\mathcal K$ with $k\ne0$. By definition,
\[
N_k=\max\left\{J_k,\left\lceil\frac{|k|M}{\nu}\right\rceil,1\right\}.
\]
Since $J_k\le J_*$ and $|k|\le K$,
\[
\left\lceil\frac{|k|M}{\nu}\right\rceil\le \frac{KM}{\nu}+1.
\]
Thus
\[
N_k\le J_*+\frac{KM}{\nu}+1.
\]
Consequently
\[
\sqrt{k^2+N_k^2}\le |k|+N_k
\le K+J_*+\frac{KM}{\nu}+1.
\]
If $0\in\mathcal K$, then
\[
N_0=J_0\le J_*\le 1+K+J_*+\frac{KM}{\nu}.
\]
Taking the maximum over the finitely many terms in the definition of $R_*$ gives
\[
R_*\le 1+K+J_*+\frac{KM}{\nu}.
\]
The lower bound for $c_*$ follows immediately from STEP5:
\[
c_*=\frac1{2R_*}
\ge
\frac{1}{2\left(1+K+J_*+KM/\nu\right)}.
\]

Now assume $0<\nu\le1$ and $c_2\le C_2\nu^{-1}$, $K_0\le C_x\nu^{-1}$. Then
\[
K_c=\left\lceil\left(\frac{2c_2}{\nu}\right)^{1/2}\right\rceil
\le \sqrt{2C_2}\,\nu^{-1}+1
\le (1+\sqrt{2C_2})\nu^{-1}.
\]
Therefore
\[
K=\max\{K_c,K_0\}
\le (1+\sqrt{2C_2}+C_x)\nu^{-1}
=C_K\nu^{-1}.
\]

If $M=0$ and $J_*\le C_y\nu^{-1}$, then
\[
1+K+J_*
\le 1+(C_K+C_y)\nu^{-1}
\le(1+C_K+C_y)\nu^{-1}.
\]
Substitution into the preceding estimate for $c_*$ gives
\[
c_*\ge \frac{\nu}{2(1+C_K+C_y)}.
\]

If $M\le C_M$ and $J_*\le C_y\nu^{-1}$, then
\[
1+K+J_*+\frac{KM}{\nu}
\le
1+C_K\nu^{-1}+C_y\nu^{-1}+C_KC_M\nu^{-2}.
\]
Since $0<\nu\le1$, both $1$ and $\nu^{-1}$ are bounded above by $\nu^{-2}$. Hence
\[
1+K+J_*+\frac{KM}{\nu}
\le (1+C_K+C_y+C_KC_M)\nu^{-2},
\]
and therefore
\[
c_*\ge \frac{\nu^2}{2(1+C_K+C_y+C_KC_M)}.
\]
\end{key-original-step}
\begin{heuristics}
The exact bound is $R_*^{-1}$, where $R_*$ is the largest resolved Fourier radius needed to keep at least half of the relevant energy. The estimate $R_*\le 1+K+J_*+KM/\nu$ separates the three costs: retaining enough $x$-modes, retaining the initial $y$-frequencies, and adding a shear-transfer barrier of size $|k|M/\nu$. The $\nu$-power corollaries are only consequences of additional assumptions on these three quantities.
\end{heuristics}

\textbf{Dependencies:} STEP3, STEP4, STEP5.

\subsubsection*{STEP7: Heat eigenfunction sharpness construction (KEY STEP)}
\noindent\\
\textbf{Claim:} For every $p>0$ and $0<\nu\le1$, set
\[
  n_\nu=\lceil\nu^{-p}\rceil,\qquad
  U_\nu(t,y)=0,\qquad
  \rho_{0,\nu}(x,y)=\cos(n_\nu y).
\]
Then $\rho_{0,\nu}\in C^\infty$, $\int_{\mathbb T^2}\rho_{0,\nu}=0$, and
\[
  \rho_\nu(t,x,y)=e^{-\nu n_\nu^2t}\cos(n_\nu y).
\]
For every $t\ge0$,
\[
  \frac{\|\rho_\nu(t)\|_{ H^{-1}}}{\|\rho_\nu(t)\|_{L^2}}=\frac1{n_\nu}\le \nu^p,
\]

For this same family, the cutoffs from STEP3--STEP5 satisfy
\[
  \mathcal K_\nu=\{0\},\qquad
  J_{*,\nu}=J_{0,\nu}=N_{0,\nu}=R_{*,\nu}=n_\nu,\qquad
  c_{*,\nu}=\frac1{2\,n_\nu}.
\]
Taking $p=1$ gives
\[
  M=0,\qquad K_{0,\nu}=0,\qquad J_{*,\nu}\le2\nu^{-1},\qquad c_{2,\nu}\le C\nu^{-1},
\]
with $C>0$ independent of $\nu$, while
\[
  \frac{\|\rho_\nu(t)\|_{H^{-1}}}{\|\rho_\nu(t)\|_{L^2}}\le \nu
  \qquad(t\ge0),
\]
and for every function $g:(0,1]\to(0,\infty)$ satisfying
\[
  \lim_{\nu\downarrow0}\frac{g(\nu)}{\nu}=+\infty,
\]
there exists $\nu_g>0$ such that for all $0<\nu<\nu_g$,
\[
  \frac{\|\rho_\nu(t)\|_{H^{-1}}}{\|\rho_\nu(t)\|_{L^2}}<g(\nu)
  \qquad(t\ge0).
\]
More generally, for every $p>0$ and every $g_p:(0,1]\to(0,\infty)$ satisfying
\[
  \lim_{\nu\downarrow0}\frac{g_p(\nu)}{\nu^p}=+\infty,
\]
the same family satisfies
\[
  \nu^{-p}\le R_{*,\nu}\le2\nu^{-p},\qquad
  \frac{\nu^p}{4}\le c_{*,\nu}\le\frac{\nu^p}{2},
\]
and
\[
  \frac{\|\rho_\nu(t)\|_{H^{-1}}}{\|\rho_\nu(t)\|_{L^2}}<g_p(\nu)
  \qquad(t\ge0)
\]
for all sufficiently small $\nu$.
\noindent\\
\textbf{Proof:}
\begin{key-original-step}
The datum $\rho_{0,\nu}(x,y)=\cos(n_\nu y)$ is smooth and has zero spatial mean. Since $U_\nu=0$, the equation is the heat equation, and
\[
\Delta\cos(n_\nu y)=-n_\nu^2\cos(n_\nu y).
\]
Thus
\[
\rho_\nu(t,x,y)=e^{-\nu n_\nu^2t}\cos(n_\nu y).
\]

The only nonzero Fourier modes are $(k,l)=(0,n_\nu)$ and $(0,-n_\nu)$. The common heat factor cancels from the ratio of any homogeneous norm to the $L^2$ norm. Therefore
\[
\|\rho_\nu(t)\|_{\dot H^{-1}}^2
=\frac1{n_\nu^2}\|\rho_\nu(t)\|_2^2.
\]
Taking square roots gives
\[
\frac{\|\rho_\nu(t)\|_{H^{-1}}}{\|\rho_\nu(t)\|_2}
=\frac1{n_\nu}.
\]
Since $n_\nu=\lceil\nu^{-p}\rceil\ge\nu^{-p}$, both ratios are bounded above by $\nu^p$.

For the cutoffs, only the $x$-mode $k=0$ has nonzero initial energy. Hence
\[
\mathcal K_\nu=\{0\}.
\]
The initial $y$-energy is supported exactly at $|l|=n_\nu$. If $J<n_\nu$, then
\[
\sum_{|l|>J}|g_0^l|^2=a_0^2>\frac{a_0^2}{2},
\]
whereas if $J\ge n_\nu$, that tail is zero. Thus
\[
J_{0,\nu}=n_\nu.
\]
Since $N_0=J_0$, and $R_*^2=N_0^2$ for the singleton set $\mathcal K_\nu=\{0\}$,
\[
J_{*,\nu}=J_{0,\nu}=N_{0,\nu}=R_{*,\nu}=n_\nu.
\]
The STEP5 constant is therefore
\[
c_{*,\nu}=\frac1{2\,R_{*,\nu}}=\frac1{2\,n_\nu}.
\]

For $0<\nu\le1$, $\nu^{-p}\ge1$, so
\[
\nu^{-p}\le n_\nu=\lceil\nu^{-p}\rceil\le \nu^{-p}+1\le2\nu^{-p}.
\]
Consequently
\[
\nu^{-p}\le R_{*,\nu}\le2\nu^{-p}
\]
and
\[
\frac{\nu^p}{4}\le
\frac1{2\,R_{*,\nu}}
\le\frac{\nu^p}{2}.
\]

Now take $p=1$. Then $M=0$, $K_{0,\nu}=0$, and
\[
J_{*,\nu}=n_\nu=\lceil\nu^{-1}\rceil\le2\nu^{-1}.
\]
The exact heat decay is
\[
\|\rho_\nu(t)\|_2=\|\rho_{0,\nu}\|_2e^{-\nu n_\nu^2t},
\]
so $c=\nu n_\nu^2$ is an admissible $L^2$ lower-decay exponent. Also,
\[
\nu n_\nu^2\le \nu(2\nu^{-1})^2=4\nu^{-1}.
\]
The explicit $c_{2,\nu}$ from STEP2 is likewise $O(\nu^{-1})$: in the $x$-independent branch, its only active vertical frequencies are $l=\pm n_\nu$, the ratio $\mathcal N/|g_0^{\pm n_\nu}|$ is independent of $\nu$, and $\beta_0=2\nu n_\nu^2\le8\nu^{-1}$. Thus $c_{2,\nu}\le C\nu^{-1}$ for a constant $C$ independent of $\nu$.

Finally, if $g_p(\nu)/\nu^p\to+\infty$, then there is $\nu_{g_p}>0$ such that $g_p(\nu)>\nu^p$ whenever $0<\nu<\nu_{g_p}$. For those $\nu$,
\[
\frac{\|\rho_\nu(t)\|_{H^{-1}}}{\|\rho_\nu(t)\|_2}
\le\nu^p<g_p(\nu)
\]
for every $t\ge0$. The stated $p=1$ assertion is the special case $g_p=g$.
\end{key-original-step}
\begin{heuristics}
This construction eliminates shear and isolates the unavoidable frequency dependence. A single heat eigenfunction never changes its normalized negative Sobolev length scale: diffusion multiplies $L^2$ and $H^{-1}$ by the same exponential factor. Thus placing the initial datum at frequency $n_\nu$ forces the ratio to be exactly $n_\nu^{-1}$ in the homogeneous norm and $(1+n_\nu^2)^{-1/2}$ in the inhomogeneous norm. Since the proof's $R_*$ is exactly $n_\nu$ for this family, the lower-bound scale $R_*^{-1}$ cannot be improved beyond constants.
\end{heuristics}

\textbf{Dependencies:} STEP1, STEP6.

\subsubsection*{STEP8: Assembly of the explicit bound and sharpness statement}
\noindent\\
\textbf{Claim:} For every admissible $(\rho_0,U,\nu)$, with $K,J_k,N_k,R_*$ as defined in STEP3--STEP5,
\[
  \inf_{t\ge0}\frac{\|\rho(t)\|_{H^{-1}}}{\|\rho(t)\|_{L^2}}
  \ge
  \frac1{2\,R_*}
  \ge
  \frac{1}{2\left(1+K+J_*+KM/\nu\right)}.
\]
For every $p>0$, the STEP7 family satisfies, for $0<\nu\le1$,
\[
  \nu^{-p}\le R_{*,\nu}=n_\nu\le2\nu^{-p},
\]
hence
\[
  \frac{\nu^p}{4}\le c_{*,\nu}=\frac1{2R_{*,\nu}}\le\frac{\nu^p}{2},
\]
while for every $t\ge0$,
\[
  \frac{\|\rho_\nu(t)\|_{H^{-1}}}{\|\rho_\nu(t)\|_{L^2}}\le\nu^p.
\]
For every function $g_p:(0,1]\to(0,\infty)$ with
\[
  \lim_{\nu\downarrow0}\frac{g_p(\nu)}{\nu^p}=+\infty,
\]
this implies
\[
  \frac{\|\rho_\nu(t)\|_{H^{-1}}}{\|\rho_\nu(t)\|_{L^2}}<g_p(\nu)
  \qquad(t\ge0)
\]
for all sufficiently small $\nu$. In the special case $p=1$, the same family satisfies
\[
  M=0,\qquad K_{0,\nu}=0,\qquad J_{*,\nu}\le2\nu^{-1},\qquad c_{2,\nu}\le C\nu^{-1},
\]
and
\[
  \frac{\|\rho_\nu(t)\|_{H^{-1}}}{\|\rho_\nu(t)\|_{L^2}}\le\nu
  \qquad(t\ge0).
\]
\noindent
\textbf{Proof:}
The first inequality is STEP5, and the second inequality is STEP6. The estimates for the heat eigenfunction family, including the bounds on $R_{*,\nu}$, $c_{*,\nu}$, and the actual $H^{-1}/L^2$ ratio, are exactly STEP7. Thus the explicit lower-bound scale produced by the proof is $R_*^{-1}$, and the family $U_\nu=0$, $\rho_{0,\nu}=\cos(\lceil\nu^{-p}\rceil y)$ shows that this $R_*^{-1}$ dependence is sharp (the lower bound cannot be improved beyond universal constants).

\textbf{Dependencies:} STEP5, STEP6, STEP7.

\subsubsection*{GOAL: Main Result}
\noindent\\
\textbf{Claim:} The original problem holds with the explicit admissible exponent
\[
c_*=\frac1{2\,R_*},
\]
where $R_*$ is constructed from $\rho_0,U,\nu$ by STEPS 2--5. More explicitly,
\[
c_*\ge
\frac{1}{2\left(1+K+J_*+KM/\nu\right)}.
\]
The exact frequency-dependent $\nu$-scale $R_*^{-1}$ is sharp.
\noindent\\
\textbf{Proof:}
Given $\rho_0,U,\nu$, first compute $c_2$ from STEP2. Then define $K$ and $\mathcal K$ by STEP3, $J_k,N_k$ by STEP4, $R_*$ by STEP5, and $J_*=\max_{k\in\mathcal K}J_k$. STEP5 proves, for every $t\ge0$,
\[
\frac{\|\rho(t)\|_{H^{-1}}}{\|\rho(t)\|_{L^2}}
\ge\frac1{2\,R_*}.
\]
STEP6 gives the displayed more explicit lower estimate in terms of $K,J_*,M,\nu$.

For sharpness, STEP7 constructs, for every $p>0$, the admissible family
\[
U_\nu=0,\qquad \rho_{0,\nu}(x,y)=\cos(\lceil\nu^{-p}\rceil y),
\]
for which $R_{*,\nu}=\lceil\nu^{-p}\rceil$, so
\[
c_{*,\nu}\asymp \nu^p,
\]
while
\[
\frac{\|\rho_\nu(t)\|_{H^{-1}}}{\|\rho_\nu(t)\|_{L^2}}
=\frac1{\lceil\nu^{-p}\rceil}
\le\nu^p
\qquad(t\ge0).
\]
Thus the dependence on the resolved frequency radius $R_*$, and hence on any prescribed $\nu$-dependent frequency scale $R_*\asymp\nu^{-p}$, cannot be improved beyond universal constants. In particular, under the natural heat scaling $p=1$, the example has $M=0$, $K_{0,\nu}=0$, $J_{*,\nu}\lesssim\nu^{-1}$, $c_{2,\nu}\lesssim\nu^{-1}$, and the actual ratio is at most $\nu$ for all $t\ge0$.

\textbf{Dependencies:} STEP8.
\end{proof}

\begin{remark}
This proof was produced by QED, supported by a Codex coding agent integrated with ChatGPT 5.5.
\end{remark}

\section{Fast time-periodic oscillations}\label{sec:fast}
\noindent
In the previous sections we considered on the special structure of shear flows. We now 
return to general divergence-free velocity fields, but consider them to be time-periodic 
with a large frequency $A$ (that is, $u(At,x,y)$ with $u$ having period $L$). The fast 
oscillation averages the advection, restoring a spectral gap and blocking the 
mode-to-mode transfer that could lead to superexponential decay. We prove an exponential 
lower bound for all sufficiently large $A$, with constants that are explicit in the data. 
The method uses a finite-dimensional adjoint bundle constructed from the time-averaged 
operator and a fast-phase averaging estimate.
\begin{theorem}
Consider on $\mathbb{T}^2$,
\begin{equation}\label{eq:nonshearnu33}
    \partial_t \rho + u(At,x,y)\!\cdot\!\nabla \rho = \nu \Delta \rho, \qquad \rho(0)=\rho_0 .
\end{equation}
Assume $\rho_0\neq0$, $\rho_0\in C^\infty$, $\int\rho_0=0$, $0<\nu\ll1$.  The velocity $u$ is real-valued,
divergence-free, $u\in L^\infty_t W^{1,\infty}_{x,y}$, and $u(t+L,\cdot)=u(t,\cdot)$ (period $L$).

Then we have explicit admissible thresholds: an upper bound on a sufficient $A_0$ and an upper bound on the
corresponding exponent $c_A>0$ (i.e.\ an admissible value) so that for all $A>A_0$,
$\|\rho(t)\|_{L^2}\ge C e^{-c_A t}$ with $C>0$. The constants may depend on $\rho_0$, $u$, $L$ and $\nu$.
\end{theorem}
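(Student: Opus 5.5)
The plan is to prove the bound by duality against a finite-dimensional family of adjoint observables built from the time-averaged problem. The fast oscillation is removed by a near-identity averaging transformation with an $O(A^{-1})$ error.

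\emph{Setup.} Write $\bar u=L^{-1}\int_0^L u(s,\cdot)\,ds$, which is again divergence-free and Lipschitz, and let $\tilde u(\theta,\cdot)=u(\theta,\cdot)-\bar u$, which has zero mean over a period in $\theta$. Consider the averaged adjoint operator $B_\nu^*=\nu\Delta+\bar u\cdot\nabla$ on mean-zero $L^2(\mathbb T^2)$. It is a compact perturbation of $\nu\Delta$ with compact resolvent, so its spectrum is discrete and each eigenvalue has a finite-dimensional generalized root space.

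\emph{Choice of spectral cut.} Since $\|\rho_0\|_{L^2}>0$, I will pick $\Lambda$ so that the Riesz projection $P$ onto the root spaces of $B_\nu^*$ with real part $\ge-\Lambda$ satisfies $P^*\rho_0\neq0$. I will also choose $\Lambda$ so that there is a spectral gap: no eigenvalue has real part in $(-\Lambda-2g,-\Lambda)$ for some explicit $g>0$. Such a $\Lambda$ exists because the eigenvalue real parts are discrete; its size is bounded by Weyl-type counting for $\nu\Delta$ perturbed by $\|\bar u\|_{W^{1,\infty}}$. On the range of $P$, the matrix of $B_\nu^*$ gives the adjoint evolution growth $e^{-\Lambda t}$ up to polynomial Jordan factors. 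On the range of $I-P$, the semigroup of the averaged forward operator $\nu\Delta-\bar u\cdot\nabla$ decays at rate at least $\Lambda+2g$ (with a constant $C_Q$ depending on resolvent bounds along the line $\operatorname{Re}z=-\Lambda-g$). This is the ``detecting bundle.''

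\emph{Averaging transformation (the main obstacle).} The perturbation $\tilde u(At)\cdot\nabla$ is not small and not bounded on $L^2$, so the gap cannot be used directly. I would first try the change of unknown $\rho=(I+A^{-1}\mathcal S(At))\sigma$. Here $\mathcal S(\theta)$ solves the zero-phase equation $(\partial_\theta+A^{-1}\nu\Delta)\mathcal S=\tilde u(\theta)\cdot\nabla\,P_{\le N}$ with zero $\theta$-mean, and $P_{\le N}$ is a Fourier cutoff. Inverting $\partial_\theta+A^{-1}\nu\Delta$ is well-defined on zero-mean periodic data and gains smoothing at high modes. This makes $A^{-1}\mathcal S$ a bounded operator of size $\lesssim A^{-1}NL\|u\|_{L^\infty}$.

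In the transformed equation $\sigma$ solves $\partial_t\sigma=(\nu\Delta-\bar u\cdot\nabla)\sigma+\mathcal R\sigma$. The remainder $\mathcal R$ has two parts. The first is an $O(A^{-1})$ bounded piece involving $\nabla u$, which is where $u\in W^{1,\infty}$ is used. The second is the high-mode part $\tilde u\cdot\nabla P_{>N}$. That piece is absorbed by the dissipation $\nu N^2$ via the energy identity; divergence-freeness kills the diagonal term, so only $\|u\|_\infty\|\nabla\rho\|$-type cross terms remain. Choosing $N\sim\|u\|_{\infty}/\nu$ times a constant, and then $A_0\gtrsim N L\|u\|_{W^{1,\infty}}\cdot C_Q/g$, makes the effective perturbation smaller than the gap $g$. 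Checking that this remainder is genuinely relatively small with respect to the spectral gap, uniformly in the non-normal projection norms, is the step I expect to be hardest.

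\emph{Cone invariance and conclusion.} With the remainder small relative to the gap, I would prove that the cone $\{\|(I-P^*)\sigma\|\le K\|P^*\sigma\|\}$ is forward invariant for suitable explicit $K$. This is the same positive-part (Gronwall on $Z_+$) device as in STEP4 of the mixing-scale proof. If $\sigma(0)$ starts outside the cone, the $(I-P^*)$ part either decays into the cone in an explicit time or the whole solution retains a lower bound. Inside the cone, $\frac{d}{dt}\|P^*\sigma\|\ge-(\Lambda+\epsilon)\|P^*\sigma\|$ with $\epsilon=O(A^{-1})$ plus the finite Jordan-block constant. This gives $\|\sigma(t)\|\ge c\,e^{-(\Lambda+1)t}$.

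Transforming back costs a factor $(1+O(A^{-1}))^{-1}\ge\frac12$ for $A>A_0$. The short-time interval before entering the cone is handled by the crude estimate $\|\rho(t)-\rho_0\|\le t(\nu\|\Delta\rho_0\|+\|u\|_\infty\|\nabla\rho_0\|)$, exactly as in STEP3 of Theorem \ref{thm:ads}. Collecting constants gives explicit $A_0$, $c_A=\Lambda+1+\beta_0$, and $C$ in terms of $\rho_0,u,L,\nu$. For time-independent $u$ we have $\tilde u=0$, so no averaging is needed and the argument applies for every $A$.
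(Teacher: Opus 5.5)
\textbf{There are two gaps, and the second is exactly the step you flagged as hardest.}

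\emph{Detecting root space.} That $P^*\rho_0\neq0$ for some cut $\Lambda$ does not follow from $\|\rho_0\|_{L^2}>0$. The operator $B_\nu^*$ is not normal, and a compact resolvent alone does not stop $\rho_0$ from annihilating every generalized root vector; an operator with compact resolvent can even have empty spectrum. You need completeness of the root vectors. The paper gets this from Keldysh's theorem, writing $(\alpha-B_\nu^*)^{-1}=R_0(I+\delta A)$ with $R_0$ the self-adjoint finite-order heat resolvent and $\delta A$ compact.

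\emph{Cone invariance.} This step is not established, and the absorption mechanism you borrow from the shear proof does not survive for general $u$. Because you average the forward equation, every remainder acts on all of $\sigma$. You must therefore show that $(I-P^*)\sigma$ decays faster than $e^{-\Lambda t}$ despite the unbounded term $\tilde u(At)\cdot\nabla P_{>N}$. In the $L^2$ energy the diagonal part vanishes, but the cross term only obeys
\[
|\operatorname{Re}\langle \tilde u\cdot\nabla P_{>N}\sigma,P_{\le N}\sigma\rangle|\le \|\tilde u\|_\infty\|\nabla P_{>N}\sigma\|_2\|P_{\le N}\sigma\|_2\le \frac{\nu}{2}\|\nabla P_{>N}\sigma\|_2^2+\frac{\|\tilde u\|_\infty^2}{2\nu}\|P_{\le N}\sigma\|_2^2 .
\]
This leaves a rate $\|\tilde u\|_\infty^2/\nu$ that does not shrink as $N$ grows. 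It is large for $0<\nu\ll1$ and has no reason to be below $g$. Putting the derivative on the low block instead gives a transfer of size $\|\tilde u\|_\infty N$. The dissipative gap $\nu(2N+1)$ across the cutoff beats this only if $\|\tilde u\|_\infty\lesssim\nu$. In the mixing-scale proof, $N\ge |k|M/\nu$ worked only because $U\partial_x$ acts on a fixed $x$-mode as the bounded multiplier $ikU$.

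Moreover, the $L^2$ energy identity does not see your gap at all. It yields decay of $(I-P^*)\sigma$ only at the heat rate dictated by $\operatorname{Re}\langle Bw,w\rangle=-\nu\|\nabla w\|_2^2$, which need not exceed $\Lambda$. The rate $\Lambda+2g$ is visible only through the non-normal constant $C_Q$ or an adapted norm. In that norm $\tilde u\cdot\nabla$ is no longer skew, so the divergence-free cancellation is lost precisely where you need it.

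\textbf{How the paper avoids this.} It never touches the infinite-dimensional complement, because it does the averaging on the adjoint side and makes it exact. It constructs a $T_A$-periodic family $\Phi_A(t)$ and a matrix $G_A$ with
\[
\partial_t\Phi_A=-(\nu\Delta+b_A(t)\cdot\nabla)\Phi_A+\Phi_AG_A,\qquad \sup_t\|\Phi_A(t)-\Phi_\nu\|+\|G_A-G_\nu\|\le\frac{\mathcal K_\nu}{A}.
\]
This is done by a contraction. The zero-phase part is inverted by $(\partial_\theta+\varepsilon\nu\Delta)^{-1}$. The phase-mean part is solved by a Sylvester resolvent, with the matrix correction $\mathcal H=G_A-G_\nu$ absorbing the component in $E_\nu$.

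Then $q_A(t)=\langle\rho(t),\Phi_A(t)\rangle$ satisfies the closed equation $q_A'=G_A^{\mathsf T}q_A$, with no remainder depending on $\rho$. Hence
\[
\|\rho(t)\|_2\ge \frac{\|q_A(0)\|}{(K_{0,\nu}+1)\,\|e^{-G_A^{\mathsf T}t}\|}.
\]
No spectral gap, cone, or decay of the complement is ever needed, and any detecting eigenvalue would do.

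Your ingredients are the right ones: averaged root spaces and the inverse of $\partial_\theta+A^{-1}\nu\Delta$. But the observables must be time-dependent and solve the oscillating adjoint equation exactly. Otherwise the unbounded $O(1)$ remainder lands on $\rho$, and your argument does not control it.
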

\begin{remark}
\noindent
\begin{enumerate}
\item The authors only edit the wording of this proof. To make it clear that the proof is done by AI, the authors keep all the format of AI's proof.
\item One may find that this proof is a bit tedious. The reason this proof is not as clear is the authors ask AI to optimize the estimate. To see a more reader-friendly proof with a relatively loose estimate, one can check \cite{QEDp5}.
\item From \cite{rowan2025superexponential}, one cannot expect for an arbitrary velocity field, the $L^2$ norm of the solution of advection-diffusion equation will only decay exponentially. However, for time independent flow, one could use the method of Keldysh theorem to prove it has such an exponental bound \cite{huang2025}. But if one move to the time period flow with period large, then it should be close to the time-depent flow. Hence, we can only expect the fast time-periodic flow to be close to the time-independent flow. 
\item As an easy corollary, one directly get that for time-independent flow, the $L^2$ norm of the solution has an exponential lower bound.
\item This proof is based on the application of Keldysh theorem and Floquet theory.
\item One may also check that instead of $L^{\infty}_tW_{x,y}^{1,\infty}$, if one only has $u\in L^{\infty}$, the argument still work. For the sake of complicated functional analysis argument, we only discuss the situation when $u$ is Lipschitz in space.
\end{enumerate}
\end{remark}
\begin{proof}
We work in the complexification
\[
H=L^2_0(\mathbb T^2;\mathbb C)=\left\{f\in L^2(\mathbb T^2;\mathbb C):\int_{\mathbb T^2}f=0\right\}.
\]
The real solution is included by complexification. The norm $\|\cdot\|_2$ is the usual Hermitian
$L^2$ norm. Pairings with adjoint test functions use the bilinear pairing
\[
\langle f,g\rangle=\int_{\mathbb T^2}f(x)g(x)\,dx,
\]
which is the pairing naturally preserved by the forward equation and its adjoint. For a vector
$V=(v_1,\ldots,v_d)$ with entries in a Banach space $X$, write
\[
\|V\|_{X^d}=\left(\sum_{j=1}^d\|v_j\|_X^2\right)^{1/2}.
\]
Thus $H^d$ below means the $d$-fold product of $H=L^2_0(\mathbb T^2;\mathbb C)$; Sobolev
regularity is always written explicitly as $H^s(\mathbb T^2)^d$. All matrix norms are operator
norms on the relevant Euclidean space. Let $\lambda_1>0$ be the first eigenvalue of $-\Delta$ on
mean-zero functions on the chosen torus normalization.

\subsubsection*{STEP1: Energy, periodicity, and adjoint observables}
\noindent\\
\textbf{Claim:} Let $H=L^2_0(\mathbb T^2;\mathbb C)$, $b_A(t,x)=u(At,x)$, and $T_A=L/A$. For every $A>0$, the solution operator $U_A(t,s):H\to H$ for
\[
\partial_t\rho=\nu\Delta\rho-b_A(t)\cdot\nabla\rho
\]
satisfies, for all $0\le s\le t$,
\[
\int_{\mathbb T^2}U_A(t,s)f\,dx=0,
\qquad
\|U_A(t,s)f\|_2\le \|f\|_2,
\]
and, for smooth data,
\[
\frac12\frac{d}{dt}\|U_A(t,s)f\|_2^2+\nu\|\nabla U_A(t,s)f\|_2^2=0.
\]
The propagator is $T_A$-periodic in the sense
\[
U_A(t+T_A,s+T_A)=U_A(t,s).
\]
Moreover, if a vector-valued adjoint observable $\Phi_A(t)=(\phi_{A,1}(t),\ldots,\phi_{A,d}(t))$ satisfies
\[
\partial_t\Phi_A=-(\nu\Delta+b_A(t)\cdot\nabla)\Phi_A+\Phi_A G_A
\]
distributionally, then
\[
q_A(t):=\bigl(\langle U_A(t,0)\rho_0,\phi_{A,1}(t)\rangle,\ldots,\langle U_A(t,0)\rho_0,\phi_{A,d}(t)\rangle\bigr)
\]
satisfies
\[
q_A'(t)=G_A^{\mathsf T}q_A(t)
\]
in distributions.
\noindent\\
\textbf{Proof:}
For smooth data, integrate the equation over the torus:
\[
\frac{d}{dt}\int_{\mathbb T^2}\rho
=\nu\int_{\mathbb T^2}\Delta\rho-\int_{\mathbb T^2}b_A\cdot\nabla\rho.
\]
The Laplacian integral is zero by periodicity. Since $\nabla\cdot b_A=0$,
\[
\int_{\mathbb T^2}b_A\cdot\nabla\rho
=\int_{\mathbb T^2}\nabla\cdot(b_A\rho)-\int_{\mathbb T^2}(\nabla\cdot b_A)\rho=0.
\]
Thus the mean is preserved.

Multiplying by $\overline{\rho}$ and taking the real part gives
\[
\frac12\frac{d}{dt}\|\rho(t)\|_2^2
=\nu\operatorname{Re}\int_{\mathbb T^2}\overline{\rho}\Delta\rho
-\operatorname{Re}\int_{\mathbb T^2}\overline{\rho}\,b_A\cdot\nabla\rho.
\]
The diffusion term is $-\nu\|\nabla\rho\|_2^2$. The transport term vanishes because
\[
2\operatorname{Re}\int_{\mathbb T^2}\overline{\rho}\,b_A\cdot\nabla\rho
=\int_{\mathbb T^2}b_A\cdot\nabla|\rho|^2
=\int_{\mathbb T^2}\nabla\cdot(b_A|\rho|^2)-\int_{\mathbb T^2}(\nabla\cdot b_A)|\rho|^2=0.
\]
Hence
\[
\frac12\frac{d}{dt}\|\rho(t)\|_2^2+\nu\|\nabla\rho(t)\|_2^2=0.
\]
This is the standard incompressible advection-diffusion energy identity.

For general $L^2$ mean-zero data, take Fourier-Galerkin approximations. The displayed identity is uniform on compact time intervals, gives boundedness in
\[
L^\infty_{\rm loc}([s,\infty);L^2)\cap L^2_{\rm loc}([s,\infty);H^1),
\]
and gives uniqueness by applying the same identity to the difference of two solutions. Passing to the weak limit gives a unique weak solution operator $U_A(t,s)$, and integrating the energy identity gives
\[
\|U_A(t,s)f\|_2^2+2\nu\int_s^t\|\nabla U_A(r,s)f\|_2^2\,dr=\|f\|_2^2.
\]
In particular $\|U_A(t,s)f\|_2\le\|f\|_2$.

Since
\[
b_A(t+T_A,x)=u(A t+L,x)=u(At,x)=b_A(t,x),
\]
the shifted function $r\mapsto U_A(r+T_A,s+T_A)f$ and the unshifted function
$r\mapsto U_A(r,s)f$ solve the same Cauchy problem. Uniqueness gives
\[
U_A(t+T_A,s+T_A)=U_A(t,s).
\]

It remains to prove the observable identity. Let $\rho(t)=U_A(t,0)\rho_0$. For smooth $\rho$ and smooth
$\Phi_A$, using the bilinear pairing,
\[
\frac{d}{dt}\langle\rho,\phi_{A,j}\rangle
=\langle\nu\Delta\rho-b_A\cdot\nabla\rho,\phi_{A,j}\rangle
+\langle\rho,\partial_t\phi_{A,j}\rangle.
\]
Periodic integration by parts and $\nabla\cdot b_A=0$ give
\[
\langle\nu\Delta\rho-b_A\cdot\nabla\rho,\phi_{A,j}\rangle
=\langle\rho,\nu\Delta\phi_{A,j}+b_A\cdot\nabla\phi_{A,j}\rangle.
\]
The adjoint-bundle equation says
\[
\partial_t\phi_{A,j}
=-(\nu\Delta+b_A\cdot\nabla)\phi_{A,j}+\sum_{k=1}^d\phi_{A,k}(G_A)_{kj}.
\]
The infinite-dimensional adjoint terms cancel, and therefore
\[
\frac{d}{dt}\langle\rho,\phi_{A,j}\rangle
=\sum_{k=1}^d(G_A)_{kj}\langle\rho,\phi_{A,k}\rangle.
\]
This is exactly $q_A'=G_A^{\mathsf T}q_A$. The distributional statement follows by applying the same computation to time-mollified Galerkin approximations and passing to the weak limit; all products are justified by
\[
\rho\in L^\infty_{\rm loc}L^2\cap L^2_{\rm loc}H^1,\qquad
\Phi_A\in L^\infty_{\rm loc}L^2\cap L^2_{\rm loc}H^2
\]
in the applications below.

\textbf{Dependencies:} None.

\subsubsection*{STEP2: Averaged adjoint root space detecting $\rho_0$ (KEY STEP)}
\noindent\\
\textbf{Claim:} Define
\[
\bar u(x)=\frac1L\int_0^L u(\theta,x)\,d\theta,\qquad
B_\nu^*=\nu\Delta+\bar u\cdot\nabla
\]
with domain $D(B_\nu^*)=H^2(\mathbb T^2)\cap H$. The generalized root spaces of $B_\nu^*$ are complete in $H$.

For the fixed $0\ne\rho_0\in C^\infty\cap H$, set
\[
\Sigma_\nu(\rho_0)=
\left\{\lambda\in\sigma(B_\nu^*):\exists m\ge1,\ \exists 0\ne\phi\in\ker(B_\nu^*-\lambda)^m,\ \langle\rho_0,\phi\rangle\ne0\right\}.
\]
Then $\Sigma_\nu(\rho_0)\ne\varnothing$, and since $B_\nu^*$ has compact resolvent one may choose
\[
\lambda_\nu\in\Sigma_\nu(\rho_0)
\quad\text{with}\quad
\gamma_\nu:=-\operatorname{Re}\lambda_\nu
=
\min_{\lambda\in\Sigma_\nu(\rho_0)}\bigl(-\operatorname{Re}\lambda\bigr)<\infty.
\]
Let $E_\nu=\bigcup_{m\ge1}\ker(B_\nu^*-\lambda_\nu)^m$, $d_\nu=\dim E_\nu$, and choose a basis
\[
\Phi_\nu=(\phi_1,\ldots,\phi_{d_\nu})\in(H^2(\mathbb T^2)\cap H)^{d_\nu}.
\]
There is a matrix $G_\nu\in\mathbb C^{d_\nu\times d_\nu}$ such that
\[
B_\nu^*\Phi_\nu=\Phi_\nu G_\nu,\qquad
\sigma(G_\nu)=\{\lambda_\nu\},
\]
and
\[
q_\nu:=
\left(\langle\rho_0,\phi_1\rangle,\ldots,\langle\rho_0,\phi_{d_\nu}\rangle\right)
\quad\text{satisfies}\quad
Q_\nu:=\|q_\nu\|_{\mathbb C^{d_\nu}}>0.
\]
Also define the finite constants
\[
K_{0,\nu}=\left(\sum_{j=1}^{d_\nu}\|\phi_j\|_2^2\right)^{1/2},\quad
K_{2,\nu}=\left(\sum_{j=1}^{d_\nu}\|\phi_j\|_{H^2}^2\right)^{1/2},\quad
g_\nu=\|G_\nu\|.
\]
\noindent
\textbf{Proof:}
\begin{key-original-step}
First note that $\bar u\in W^{1,\infty}(\mathbb T^2)$ and $\nabla\cdot\bar u=0$. Fix any $\alpha>0$ and set
\[
R=(\alpha-B_\nu^*)^{-1},\qquad R_0=(\alpha-\nu\Delta)^{-1}
\]
on $H$. We justify these objects and then apply Keldysh's theorem to $R$.

For $f\in H^2\cap H$,
\[
\operatorname{Re}( (\alpha-B_\nu^*)f,f)_{L^2}
=\alpha\|f\|_2^2+\nu\|\nabla f\|_2^2,
\]
because $\bar u\cdot\nabla$ is skew-symmetric in the Hermitian $L^2$ pairing. The associated coercive form on $H^1\cap H$ gives, by the Galerkin/Lax-Milgram argument, a unique $H^1$ solution of
$(\alpha-B_\nu^*)f=h$ for each $h\in H$. Since
\[
\nu\Delta f=-h+\alpha f-\bar u\cdot\nabla f\in L^2,
\]
elliptic regularity on the torus gives $f\in H^2\cap H$. Thus $R:H\to H^2\cap H$ is bounded.

The heat resolvent $R_0$ is compact, self-adjoint, injective, and of finite order. Indeed, if
$-\Delta e_k=\mu_k e_k$ is an orthonormal Fourier basis of mean-zero eigenfunctions, then
\[
R_0 e_k=(\alpha+\nu\mu_k)^{-1}e_k.
\]
In two dimensions $\sum_k(\alpha+\nu\mu_k)^{-p}<\infty$ for every $p>1$, so $R_0$ belongs to a Schatten class and is of finite order. This is the concrete Fourier instance of the compact self-adjoint spectral theorem, see \cite[Theorem 1.1]{shlapunov2013completeness}.

The resolvent identity is
\[
(\alpha-\nu\Delta)Rh=h+\bar u\cdot\nabla Rh,
\]
so
\[
Rh=R_0(I+\delta A)h,\qquad \delta A h=\bar u\cdot\nabla Rh.
\]
Because $R:H\to H^2$ is bounded, $\nabla R:H\to H^1$ is bounded. Multiplication by
$\bar u\in W^{1,\infty}$ maps $H^1$ boundedly into $H^1$, and the embedding
$H^1(\mathbb T^2)\hookrightarrow L^2(\mathbb T^2)$ is compact. Hence $\delta A:H\to H$ is compact. Also $R=R_0(I+\delta A)$ is injective because it is a resolvent.

Keldysh's weak compact perturbation theorem applies (see \cite[Theorem 1.4]{shlapunov2013completeness}).

The root spaces of $R$ and $B_\nu^*$ coincide after the spectral change
\[
\mu=(\alpha-\lambda)^{-1}.
\]
Indeed,
\[
R-\mu I=\mu R\bigl(B_\nu^*-\lambda I\bigr),
\]
and $R$ commutes with $B_\nu^*-\lambda I$ on the relevant domains. Iterating this identity gives
\[
\ker(R-\mu I)^m=\ker(B_\nu^*-\lambda I)^m.
\]
Therefore the generalized root spaces of $B_\nu^*$ are complete in $H$.

Now suppose, for contradiction, that $\rho_0$ annihilates every generalized root vector of $B_\nu^*$ under the bilinear pairing. The root vectors have dense linear span in $H$, so the continuous linear functional
\[
\phi\mapsto \langle\rho_0,\phi\rangle
\]
vanishes on all of $H$. Taking $\phi=\overline{\rho_0}$ gives $\|\rho_0\|_2^2=0$, contradicting $\rho_0\ne0$. Hence $\Sigma_\nu(\rho_0)\ne\varnothing$.

The operator $B_\nu^*$ has compact resolvent because $R$ is compact. Its eigenvalues have finite algebraic multiplicity and no finite accumulation point, since the nonzero spectral values of the compact operator $R$ have that property and $\lambda=\alpha-\mu^{-1}$. We also need the minimum defining $\gamma_\nu$ to be attained. Let $\lambda$ be an eigenvalue of $B_\nu^*$ with eigenvector $f\ne0$. Then
\[
\operatorname{Re}\lambda
=-\nu\frac{\|\nabla f\|_2^2}{\|f\|_2^2}
\le -\nu\lambda_1.
\]
If $-\operatorname{Re}\lambda\le R_*$, then $\|\nabla f\|_2/\|f\|_2\le (R_*/\nu)^{1/2}$, and
\[
|\operatorname{Im}\lambda|
\le \|\bar u\|_\infty\frac{\|\nabla f\|_2}{\|f\|_2}
\le \|\bar u\|_\infty (R_*/\nu)^{1/2}.
\]
Thus all eigenvalues with $-\operatorname{Re}\lambda\le R_*$ lie in a bounded subset of $\mathbb C$. Since the spectrum has no finite accumulation and each eigenvalue has finite algebraic multiplicity, there are only finitely many such eigenvalues. Choosing any detecting eigenvalue first and taking $R_*$ to be its $-\operatorname{Re}$ value shows that the detecting set has an attained minimum. This gives $\lambda_\nu$ and $\gamma_\nu<\infty$.

Let $E_\nu$ be the full generalized root space for $\lambda_\nu$. It is finite-dimensional and invariant under $B_\nu^*$. Therefore, in any basis
$\Phi_\nu=(\phi_1,\ldots,\phi_{d_\nu})$ of $E_\nu$, there is a matrix $G_\nu$ satisfying
\[
B_\nu^*\Phi_\nu=\Phi_\nu G_\nu.
\]
Because $E_\nu$ is the generalized root space for the single spectral value $\lambda_\nu$, the finite-dimensional matrix representing $B_\nu^*|_{E_\nu}$ has spectrum $\{\lambda_\nu\}$. Root vectors belong to the domain of $B_\nu^*$, hence to $H^2\cap H$. Finally, since $\lambda_\nu\in\Sigma_\nu(\rho_0)$, the functional $\phi\mapsto\langle\rho_0,\phi\rangle$ is not identically zero on $E_\nu$. Therefore its coordinate vector in the chosen basis satisfies $Q_\nu>0$. The constants $K_{0,\nu},K_{2,\nu},g_\nu$ are finite because $d_\nu<\infty$ and $\phi_j\in H^2$.
\end{key-original-step}
\begin{heuristics}
The averaged adjoint is an elliptic dissipative operator plus a skew first-order perturbation. Its resolvent is a compact perturbation of the self-adjoint heat resolvent in exactly the weak Keldysh sense, so the root vectors cannot miss a nonzero datum. The choice of the detecting root space with minimal $-\operatorname{Re}\lambda$ is legitimate because only finitely many eigenvalues can live above any fixed dissipative level.
\end{heuristics}

\textbf{Dependencies:} STEP1.

\subsubsection*{STEP3: Fast-periodic persistence of the detecting adjoint bundle (KEY STEP)}
\noindent\\
\textbf{Claim:} Let $M=1+\|u\|_{L^\infty([0,L];W^{1,\infty})}$. Let $C_R(L)\ge1$ be a constant such that for every $0<\delta\le1$, every zero-phase $F\in L^2(\mathbb R/L\mathbb Z;H)$, and
\[
\widehat{R_\delta F}_{\ell m}
=
\frac{\widehat F_{\ell m}}{i(2\pi\ell/L)-\delta\mu_m}
\qquad(\ell\ne0,\ -\Delta e_m=\mu_m e_m),
\]
one has
\[
\|R_\delta F\|_{C^0_\theta H}
+\sqrt{\delta}\|\nabla R_\delta F\|_{L^2_\theta L^2_x}
+\delta\|\Delta R_\delta F\|_{L^2_\theta L^2_x}
+\|\partial_\theta R_\delta F\|_{L^2_\theta H}
\le C_R(L)\|F\|_{L^2_\theta H}.
\]
Let $P_\nu$ be the Riesz projection onto $E_\nu$, and write $\Pi_\nu^\perp=I-P_\nu$. Let $C_{S,\nu}\ge1$ be a finite Sylvester-resolvent constant for
\[
\mathcal L_\nu(Y,\mathcal H)=-B_\nu^*Y+YG_\nu+\Phi_\nu \mathcal H,\qquad P_\nu Y=0,
\]
namely
\[
\|Y\|_{H^1}+\|\mathcal H\|\le C_{S,\nu}\|\mathcal R\|_{H^{-1}}
\]
and
\[
\|Y\|_{H^2}+\|\mathcal H\|\le C_{S,\nu}\|\mathcal R\|_{L^2}
\]
whenever $\mathcal L_\nu(Y,\mathcal H)=\mathcal R$ and $\mathcal R$ belongs to the indicated space. Define
\[
S_\nu:=1+K_{2,\nu}+g_\nu,
\qquad
\mathcal K_\nu
=
10^6\,C_R(L)^2\,C_{S,\nu}^2\,M^2S_\nu^2,
\]
and the sufficient bundle threshold
\[
A_{\mathrm{bun},\nu}
=
\max\left\{4\mathcal K_\nu,\ \nu,\ \frac{64\mathcal K_\nu^2}{\nu},\ 1000\,C_{S,\nu}\mathcal K_\nu\right\}.
\]
For every $A>A_{\mathrm{bun},\nu}$, there exist a $T_A=L/A$-periodic family
\[
\Phi_A(t)=(\phi_{A,1}(t),\ldots,\phi_{A,d_\nu}(t))
\]
and a matrix $G_A\in\mathbb C^{d_\nu\times d_\nu}$ satisfying
\[
\partial_t\Phi_A
=
-(\nu\Delta+b_A(t)\cdot\nabla)\Phi_A+\Phi_A G_A
\]
in $L^2_{\rm loc}(\mathbb R;H^{-2})$, and the quantitative estimates
\[
\sup_{t\in\mathbb R}\|\Phi_A(t)-\Phi_\nu\|_{H^{d_\nu}}
+\|G_A-G_\nu\|
\le \frac{\mathcal K_\nu}{A},
\]
\[
\sup_{t\in\mathbb R}\|\Phi_A(t)\|_{H^{d_\nu}}
\le K_{0,\nu}+1,
\]
and
\[
\left\|
\bigl(\langle\rho_0,\phi_{A,1}(0)\rangle,\ldots,\langle\rho_0,\phi_{A,d_\nu}(0)\rangle\bigr)
-q_\nu
\right\|
\le \frac{\mathcal K_\nu\|\rho_0\|_2}{A}.
\]
Consequently, if also
\[
A>A_{\mathrm{pair},\nu}:=\frac{2\mathcal K_\nu\|\rho_0\|_2}{Q_\nu},
\]
then the initial nonautonomous observable satisfies
\[
\|q_A(0)\|\ge \frac12Q_\nu.
\]
\noindent
\textbf{Proof:}
\begin{key-original-step}
We first verify the two auxiliary constants used in the statement.

For $R_\delta$, put $\omega_\ell=2\pi\ell/L$. The Fourier multiplier denominator is
\[
i\omega_\ell-\delta\mu_m,\qquad \ell\ne0.
\]
The inequalities
\[
\left|\frac1{i\omega_\ell-\delta\mu_m}\right|\le\frac1{|\omega_\ell|},\qquad
\left|\frac{\omega_\ell}{i\omega_\ell-\delta\mu_m}\right|\le1,\qquad
\left|\frac{\delta\mu_m}{i\omega_\ell-\delta\mu_m}\right|\le1
\]
give the $L^2_\theta H$, $\partial_\theta$, and $\delta\Delta$ bounds. Also
\[
\frac{\sqrt{\delta}\mu_m^{1/2}}{(\omega_\ell^2+\delta^2\mu_m^2)^{1/2}}
\le \frac{1}{\sqrt{2|\omega_\ell|}}
\le \sqrt{\frac{L}{4\pi}},
\]
where the first inequality follows by maximizing $x^{1/2}/(a^2+x^2)^{1/2}$ with
$x=\delta\mu_m$ and $a=|\omega_\ell|$. The $C^0_\theta H$ bound follows from the one-dimensional embedding
\[
H^1(\mathbb R/L\mathbb Z;H)\hookrightarrow C^0(\mathbb R/L\mathbb Z;H).
\]
Thus one may take any $C_R(L)\ge1$ large enough to dominate these elementary constants.

We next justify that $C_{S,\nu}$ is finite. For $s=-1,0$, regard $B_\nu^*$ as the closed elliptic operator
\[
B_\nu^*:H^{s+2}_0(\mathbb T^2)\to H^s_0(\mathbb T^2),
\]
where $H^s_0$ denotes the mean-zero Sobolev space, and $H^{-1}_0=(H^1_0)^*$. Let $\Gamma$ be a positively oriented contour enclosing only the spectral point $\lambda_\nu$. The Riesz projection
\[
P_\nu=\frac{1}{2\pi i}\int_\Gamma (z-B_\nu^*)^{-1}\,dz
\]
is bounded on both $H^{-1}_0$ and $L^2_0$. This follows from the elliptic estimates
\[
\|(z-B_\nu^*)^{-1}f\|_{H^{s+2}}\le C_{\Gamma,s,\nu}\|f\|_{H^s},\qquad z\in\Gamma,\ s=-1,0.
\]
Indeed, if such an estimate failed for a fixed compact contour, there would be $z_n\in\Gamma$ and $w_n$ with $\|w_n\|_{H^{s+2}}=1$ but $\|(z_n-B_\nu^*)w_n\|_{H^s}\to0$; compactness of the lower Sobolev embedding and the ellipticity of $\nu\Delta$ would give a nonzero $w$ solving $(z-B_\nu^*)w=0$ for some $z\in\Gamma$, contradicting that $\Gamma$ lies in the resolvent set.

Let $B_{\perp}=B_\nu^*|_{\Pi_\nu^\perp H}$. Its spectrum is disjoint from $\sigma(G_\nu)=\{\lambda_\nu\}$. Given a vector-valued right side $\mathcal R$, write $P_\nu\mathcal R=\Phi_\nu \mathbf{h}_{\mathcal R}$ in the chosen basis $\Phi_\nu$, column by column, and set
\[
\mathbf{h}=\mathbf{h}_{\mathcal R},\qquad
Y=\frac{1}{2\pi i}\int_\Gamma (z-B_{\perp})^{-1}\Pi_\nu^\perp\mathcal R\,(z-G_\nu)^{-1}\,dz .
\]
The coordinate map $\mathcal R\mapsto \mathbf{h}_{\mathcal R}$ is bounded from $H^{-1}$ and from $L^2$ because $P_\nu$ has finite-dimensional range. The contour formula gives
\[
B_{\perp}Y-YG_\nu=-\Pi_\nu^\perp\mathcal R.
\]
The identity is obtained by inserting
\[
B_{\perp}(z-B_{\perp})^{-1}=z(z-B_{\perp})^{-1}-I,\qquad
(z-G_\nu)^{-1}G_\nu=z(z-G_\nu)^{-1}-I
\]
under the integral. The integral of $(z-B_{\perp})^{-1}$ over $\Gamma$ is zero, while the integral of $(z-G_\nu)^{-1}$ is $2\pi i I$. Hence
\[
-B_\nu^*Y+YG_\nu+\Phi_\nu \mathbf{h}=\Pi_\nu^\perp\mathcal R+P_\nu\mathcal R=\mathcal R,\qquad P_\nu Y=0.
\]
The same contour estimates give
\[
\|Y\|_{H^1}+\|\mathcal H\|\le C_{S,\nu}\|\mathcal R\|_{H^{-1}},
\qquad
\|Y\|_{H^2}+\|\mathcal H\|\le C_{S,\nu}\|\mathcal R\|_{L^2}
\]
after increasing one finite constant $C_{S,\nu}\ge1$.

Now set
\[
\varepsilon=A^{-1},\qquad \delta=\varepsilon\nu,\qquad \Theta=\mathbb R/L\mathbb Z,
\]
and seek
\[
\Phi_A(t)=\Psi_\varepsilon(At),\qquad G_A=G_\varepsilon.
\]
The desired equation is equivalent to the $L$-periodic phase equation
\begin{equation}\label{eq:phase}
\partial_\theta\Psi_\varepsilon
=\varepsilon\left[-(\nu\Delta+u(\theta)\cdot\nabla)\Psi_\varepsilon
+\Psi_\varepsilon G_\varepsilon\right].
\end{equation}
Let $\mathcal M F=L^{-1}\int_0^L F(\theta)\,d\theta$. Write
\begin{equation}\label{eq:decomp}
\Psi_\varepsilon=\Phi_\nu+Y+Z,\qquad \mathcal MZ=0,\qquad P_\nu Y=0,\qquad
G_\varepsilon=G_\nu+\mathcal H.
\end{equation}
Putting $\partial_\theta+\varepsilon\nu\Delta$ on the left of the zero-phase equation gives
\begin{equation}\label{eq:Zdef}
Z=\varepsilon R_\delta F_0(Z,Y,\mathcal H),
\end{equation}
where
\begin{equation}	\label{eq:F0}
F_0(Z,Y,\mathcal H)
=-(u-\bar u)\cdot\nabla(\Phi_\nu+Y)
+(I-\mathcal M)\{-u\cdot\nabla Z+Z(G_\nu+\mathcal H)\}.
\end{equation}
The spatial mean of $F_0$ is zero because $u(\theta)$, $\bar u$, and $u(\theta)-\bar u$ are divergence-free. Its phase mean is zero by construction. Taking the phase mean of \eqref{eq:phase} gives
\begin{equation}\label{eq:mean}
\mathcal L_\nu(Y,\mathcal H)
=\mathcal M\{u\cdot\nabla Z-ZG_\nu-Z\mathcal H\}-Y\mathcal H.
\end{equation}
Equations \eqref{eq:Zdef} and \eqref{eq:mean} are exactly equivalent to \eqref{eq:phase}, since their sum reconstructs the zero-phase and phase-mean parts of \eqref{eq:phase}.

Let
\[
r:=\mathcal K_\nu\varepsilon.
\]
Since $A>A_{\rm bun,\nu}$,
\begin{equation}\label{eq:small}
\delta\le1,\qquad
r<\frac14,\qquad
\sqrt{\frac{\varepsilon}{\nu}}\le \frac{1}{8\mathcal K_\nu},\qquad
C_{S,\nu}r\le\frac1{1000}.
\end{equation}
We use normalized $L^2_\theta$ norms on $\Theta$. Consider triples $(Z,Y,\mathcal H)$ with
$\mathcal MZ=0$, $P_\nu Y=0$, and finite norm
\begin{equation}\label{eq:norm}
\|(Z,Y,\mathcal H)\|_\varepsilon
=\|Z\|_{C^0_\theta H^{d_\nu}}
+\sqrt\delta\|\nabla Z\|_{L^2_\theta L^2_x}
+\delta\|\Delta Z\|_{L^2_\theta L^2_x}
+\|Y\|_{(L^2)^{d_\nu}}
+\sqrt\delta\|Y\|_{(H^2)^{d_\nu}}
+\|\mathcal H\|.
\end{equation}
This is a complete metric space after identifying functions equal almost everywhere.

Define a map $\mathfrak T$ on the closed ball $\|(Z,Y,\mathcal H)\|_\varepsilon\le r$ by
\begin{equation}\label{eq:Zplus}
Z^+=\varepsilon R_\delta F_0(Z,Y,\mathcal H),
\end{equation}
and by letting $(Y^+,\mathcal H^+)$ be the Sylvester solution of
\begin{equation}\label{eq:Sylv}
\mathcal L_\nu(Y^+,\mathcal H^+)
=\mathcal M\{u\cdot\nabla Z^+-Z^+G_\nu-Z^+\mathcal H\}-Y\mathcal H.
\end{equation}
We prove that $\mathfrak T$ maps the ball into itself and is a contraction.

The elementary product estimates used repeatedly are, for every divergence-free $v\in W^{1,\infty}$,
\begin{equation}	\label{eq:prodest}
\|v\cdot\nabla W\|_{H^{-1}}\le \|v\|_\infty\|W\|_2,
\qquad
\|v\cdot\nabla W\|_2\le \|v\|_\infty\|\nabla W\|_2.
\end{equation}
The first inequality follows by testing against $a\in H^1$ with $\|a\|_{H^1}\le1$:
\[
|\langle v\cdot\nabla W,a\rangle|
=|\langle W,v\cdot\nabla a\rangle|
\le \|v\|_\infty\|W\|_2.
\]
On the ball, \eqref{eq:small} gives
\begin{equation}\label{eq:Yball}
\|Y\|_{H^2}\le\frac{r}{\sqrt\delta}
=\mathcal K_\nu\sqrt{\frac{\varepsilon}{\nu}}\le\frac18,
\qquad
\|\nabla Z\|_{L^2_\theta L^2_x}\le\frac18.
\end{equation}
Using $\|u-\bar u\|_{L^\infty_\theta W^{1,\infty}}\le2M$, \eqref{eq:F0}, and $r<1$,
\begin{equation}\label{eq:F0est}
\begin{aligned}
\|F_0(Z,Y,\mathcal H)\|_{L^2_\theta H^{d_\nu}}
&\le 2M(K_{2,\nu}+\|Y\|_{H^2})
+M\|\nabla Z\|_{L^2_\theta L^2_x}
+(g_\nu+\|\mathcal H\|)\|Z\|_{L^2_\theta H}  \\
&\le 4MS_\nu .
\end{aligned}
\end{equation}
Therefore
\begin{equation}\label{eq:Zplusest}
\|Z^+\|_{C^0_\theta H}
+\sqrt\delta\|\nabla Z^+\|_{L^2_\theta L^2_x}
+\delta\|\Delta Z^+\|_{L^2_\theta L^2_x}
\le 4C_R(L)MS_\nu\varepsilon
\le \frac{r}{100}.
\end{equation}
The last inequality follows from $\mathcal K_\nu=10^6C_R(L)^2C_{S,\nu}^2M^2S_\nu^2$ and all factors being at least one.

Let
\[
\mathcal R_{\rm mean}
=\mathcal M\{u\cdot\nabla Z^+-Z^+G_\nu-Z^+\mathcal H\}-Y\mathcal H.
\]
Using \eqref{eq:prodest}, \eqref{eq:Zplusest}, $M+g_\nu+1\le2MS_\nu$, and $C_{S,\nu}r\le10^{-3}$, we get
\begin{equation}\label{eq:RmHm1}
\begin{aligned}
\|\mathcal R_{\rm mean}\|_{H^{-1}}
&\le (M+g_\nu+\|\mathcal H\|)\|Z^+\|_{L^2_\theta H}+\|Y\|_2\|\mathcal H\|  \\
&\le 8C_R(L)M^2S_\nu^2\varepsilon+r^2.
\end{aligned}
\end{equation}
The Sylvester $H^{-1}$ bound gives
\begin{equation}\label{eq:YmH1}
\|Y^+\|_{H^1}+\|\mathcal H^+\|
\le C_{S,\nu}\left(8C_R(L)M^2S_\nu^2\varepsilon+r^2\right)
\le \frac{r}{50}.
\end{equation}
Indeed, the first term is at most $8\cdot10^{-6}r$, and the second is at most $10^{-3}r$.

For the $H^2$ part, use the $L^2$ estimate in \eqref{eq:prodest}:
\[
\|\mathcal R_{\rm mean}\|_{L^2}
\le M\|\nabla Z^+\|_{L^2_\theta L^2_x}
+(g_\nu+\|\mathcal H\|)\|Z^+\|_{L^2_\theta H}
+\|Y\|_2\|\mathcal H\|.
\]
Multiplying by $\sqrt\delta$, using $\sqrt\delta\le1$, and using \eqref{eq:Zplusest} gives
\begin{equation}\label{eq:RmL2}
\sqrt\delta\,\|\mathcal R_{\rm mean}\|_{L^2}
\le 8C_R(L)M^2S_\nu^2\varepsilon+r^2.
\end{equation}
Hence
\begin{equation}\label{eq:YmH2}
\sqrt\delta\|Y^+\|_{H^2}+\sqrt\delta\|\mathcal H^+\|
\le \frac{r}{50}.
\end{equation}
The norm contains $\|\mathcal H^+\|$ without $\sqrt\delta$, and this is already controlled by \eqref{eq:YmH1}. Equations \eqref{eq:Zplusest}, \eqref{eq:YmH1}, and \eqref{eq:YmH2} imply $\|\mathfrak T(Z,Y,\mathcal H)\|_\varepsilon\le r$.

Now compare two triples in the ball and write differences with a prefix $\mathrm d$. From \eqref{eq:F0},
\begin{equation}\label{eq:dF0}
\begin{aligned}
\|\mathrm d F_0\|_{L^2_\theta H}
&\le 2M\|\mathrm dY\|_{H^2}
+M\|\nabla\mathrm dZ\|_{L^2_\theta L^2_x}
+(g_\nu+1)\|\mathrm dZ\|_{L^2_\theta H}
+\|\mathrm d\mathcal H\|\,\|Z_2\|_{L^2_\theta H}  \\
&\le 6MS_\nu\,\delta^{-1/2}\|(\mathrm dZ,\mathrm dY,\mathrm d\mathcal H)\|_\varepsilon.
\end{aligned}
\end{equation}
Thus
\begin{equation}\label{eq:dZplus}
\begin{aligned}
&\|\mathrm dZ^+\|_{C^0_\theta H}
+\sqrt\delta\|\nabla\mathrm dZ^+\|_{L^2_\theta L^2_x}
+\delta\|\Delta(\mathrm dZ^+)\|_{L^2_\theta L^2_x}  \\
&\qquad\le
6C_R(L)MS_\nu\sqrt{\frac{\varepsilon}{\nu}}\,
\|(\mathrm dZ,\mathrm dY,\mathrm d\mathcal H)\|_\varepsilon
\le \frac1{100}\|(\mathrm dZ,\mathrm dY,\mathrm d\mathcal H)\|_\varepsilon .
\end{aligned}
\end{equation}
The last inequality follows from \eqref{eq:small} and the definition of $\mathcal K_\nu$.

For the mean equation, the difference of the right side of \eqref{eq:Sylv} is a sum of
\[
\mathcal M(u\cdot\nabla\mathrm dZ^+),\quad
-\mathcal M(\mathrm dZ^+G_\nu),\quad
-\mathcal M(\mathrm dZ^+\mathcal H_1),\quad
-\mathcal M(Z_2\,\mathrm d\mathcal H),\quad
-\mathrm dY\,\mathcal H_1,\quad
-Y_2\,\mathrm d\mathcal H.
\]
Its $H^{-1}$ norm is bounded by
\[
(M+g_\nu+1)\|\mathrm dZ^+\|_{L^2_\theta H}
+r\|\mathrm d\mathcal H\|+r\|\mathrm dY\|_2+r\|\mathrm d\mathcal H\|.
\]
Its $L^2$ norm, after multiplication by $\sqrt\delta$, is bounded by the same expression with
$\|\mathrm dZ^+\|_{L^2_\theta H}$ replaced by the left side of \eqref{eq:dZplus}. Set
\[
\alpha=6C_R(L)MS_\nu\sqrt{\frac{\varepsilon}{\nu}}.
\]
Then the pre-final inequality in \eqref{eq:dZplus} gives the $Z^+$-difference norm bounded by
$\alpha\|(\mathrm dZ,\mathrm dY,\mathrm d\mathcal H)\|_\varepsilon$. Since
\[
C_{S,\nu}(M+g_\nu+1)\alpha
\le
12C_R(L)C_{S,\nu}M^2S_\nu^2\sqrt{\frac{\varepsilon}{\nu}}
\le
\frac{12C_R(L)C_{S,\nu}M^2S_\nu^2}{8\mathcal K_\nu}
\le 2\cdot10^{-6},
\]
and $3C_{S,\nu}r\le3\cdot10^{-3}$, applying the two Sylvester estimates gives
\begin{equation}\label{eq:dYm}
\|\mathrm dY^+\|_{H^1}+\|\mathrm d\mathcal H^+\|+\sqrt\delta\|\mathrm dY^+\|_{H^2}
\le \frac1{50}\|(\mathrm dZ,\mathrm dY,\mathrm d\mathcal H)\|_\varepsilon.
\end{equation}
Combining \eqref{eq:dZplus} and \eqref{eq:dYm}, $\mathfrak T$ is a strict contraction, with Lipschitz constant less than $1/10$. The fixed-point theorem gives a unique fixed point in the ball.

At the fixed point, \eqref{eq:Zdef} and \eqref{eq:mean} imply \eqref{eq:phase} in $L^2(\Theta;H^{-2})$. Define
\[
\Phi_A(t)=\Psi_{1/A}(At),\qquad G_A=G_{1/A}.
\]
The $L$-periodicity of $\Psi_{1/A}$ gives $T_A=L/A$ periodicity of $\Phi_A$, and multiplying \eqref{eq:phase} by $A$ gives
\[
\partial_t\Phi_A
=-(\nu\Delta+b_A(t)\cdot\nabla)\Phi_A+\Phi_A G_A
\]
in $L^2_{\rm loc}(\mathbb R;H^{-2})$.

Because the fixed point equals its image under $\mathfrak T$, the sharper output estimates \eqref{eq:Zplusest}, \eqref{eq:YmH1}, and \eqref{eq:YmH2} apply to it. Thus
\[
\sup_t\|\Phi_A(t)-\Phi_\nu\|_{H^{d_\nu}}+\|G_A-G_\nu\|
\le \frac{\mathcal K_\nu}{A}.
\]
Also $A>4\mathcal K_\nu$ gives $\mathcal K_\nu/A<1$, so
\[
\sup_t\|\Phi_A(t)\|_{H^{d_\nu}}\le K_{0,\nu}+1.
\]
Finally, by Cauchy's inequality in each coordinate,
\[
\left\|
\bigl(\langle\rho_0,\phi_{A,1}(0)\rangle,\ldots,\langle\rho_0,\phi_{A,d_\nu}(0)\rangle\bigr)
-q_\nu
\right\|
\le
\|\rho_0\|_2\,\|\Phi_A(0)-\Phi_\nu\|_{H^{d_\nu}}
\le \frac{\mathcal K_\nu\|\rho_0\|_2}{A}.
\]
If $A>A_{\rm pair,\nu}=2\mathcal K_\nu\|\rho_0\|_2/Q_\nu$, this last inequality gives
\[
\|q_A(0)\|\ge Q_\nu-\frac{\mathcal K_\nu\|\rho_0\|_2}{A}\ge \frac12Q_\nu.
\]

The construction above is a direct finite-cluster averaging argument. It is consistent with the general fast periodic parabolic averaging principle of Matthies \cite{matthies2001time}.
\end{key-original-step}
\begin{heuristics}
The nonzero phase modes are controlled by $(\partial_\theta+\varepsilon\nu\Delta)^{-1}$. The denominator $i\omega_\ell-\varepsilon\nu\mu_m$ prevents high spatial frequencies from destroying the averaging estimate, but derivative terms still lose $\sqrt{\varepsilon/\nu}$, explaining the $64\mathcal K_\nu^2/\nu$ threshold. The mean equation is finite-codimensional: the chosen root space supplies the missing matrix parameter $\mathcal H=G_\varepsilon-G_\nu$, and the complement is solved by a Sylvester resolvent. The added threshold $1000C_{S,\nu}\mathcal K_\nu$ makes the quadratic mean term $Y\mathcal H$ genuinely perturbative.
\end{heuristics}

\textbf{Dependencies:} STEP2.

\subsubsection*{STEP4: Finite-dimensional lower bound from the adjoint bundle}
\noindent\\
\textbf{Claim:} Fix any tuning parameter $0<\eta\le1$. Define
\[
D_{\nu,\eta}:=
\sup_{t\ge0}
e^{-(\gamma_\nu+\eta)t}
\|e^{-G_\nu^{\mathsf T}t}\|_{\mathbb C^{d_\nu}\to\mathbb C^{d_\nu}}<\infty.
\]
Equivalently, if $G_\nu^{\mathsf T}=S_\nu^{J}(\lambda_\nu I+N_\nu)(S_\nu^{J})^{-1}$, $N_\nu^{r_\nu}=0$, then one may use the explicit upper bound
\[
D_{\nu,\eta}
\le
\|S_\nu^{J}\|\,\|(S_\nu^{J})^{-1}\|
\sum_{k=0}^{r_\nu-1}\eta^{-k}.
\]
For every
\[
A>A_{\nu}^{(0)}:=
\max\{A_{\mathrm{bun},\nu},A_{\mathrm{pair},\nu}\},
\]
the solution $\rho_A(t)=U_A(t,0)\rho_0$ satisfies
\[
\|\rho_A(t)\|_2
\ge
C_{\nu,\eta}\exp[-c_{A,\nu,\eta}t]
\qquad(t\ge0),
\]
with
\[
C_{\nu,\eta}
=
\frac{Q_\nu}{2D_{\nu,\eta}(K_{0,\nu}+1)}
\]
and
\[
c_{A,\nu,\eta}
=
\gamma_\nu+\eta
+
\frac{D_{\nu,\eta}\mathcal K_\nu}{A}.
\]
\noindent
\textbf{Proof:}
Since $\sigma(G_\nu)=\{\lambda_\nu\}$ and $G_\nu$ is finite-dimensional, $G_\nu^{\mathsf T}$ is similar to
$\lambda_\nu I+N_\nu$, where $N_\nu$ is nilpotent. Hence
\[
e^{-G_\nu^{\mathsf T}t}
=S_\nu^{J} e^{-\lambda_\nu t}e^{-N_\nu t}(S_\nu^{J})^{-1}.
\]
Choosing Jordan form for $N_\nu$, $\|N_\nu^k\|\le1$ for $0\le k<r_\nu$, and
\[
\begin{aligned}
e^{-(\gamma_\nu+\eta)t}\|e^{-G_\nu^{\mathsf T}t}\|
&\le \|S_\nu^{J}\|\|(S_\nu^{J})^{-1}\|e^{-\eta t}
\sum_{k=0}^{r_\nu-1}\frac{t^k}{k!}\|N_\nu^k\|  \\
&\le \|S_\nu^{J}\|\|(S_\nu^{J})^{-1}\|\sum_{k=0}^{r_\nu-1}\eta^{-k}.
\end{aligned}
\]
Thus $D_{\nu,\eta}<\infty$, and the displayed explicit bound is valid.

Let $E_A=G_A^{\mathsf T}-G_\nu^{\mathsf T}$. STEP3 gives
\[
\|E_A\|\le \frac{\mathcal K_\nu}{A}.
\]
Duhamel's formula for $e^{-(G_\nu^{\mathsf T}+E_A)t}$ gives
\[
e^{-G_A^{\mathsf T}t}
=e^{-G_\nu^{\mathsf T}t}
-\int_0^t e^{-G_\nu^{\mathsf T}(t-s)}E_Ae^{-G_A^{\mathsf T}s}\,ds.
\]
Using the definition of $D_{\nu,\eta}$ and Gronwall's inequality,
\begin{equation}\label{eq:41}
\|e^{-G_A^{\mathsf T}t}\|
\le
D_{\nu,\eta}\exp\left[\left(\gamma_\nu+\eta+D_{\nu,\eta}\|E_A\|\right)t\right]
\le
D_{\nu,\eta}\exp\left[\left(\gamma_\nu+\eta+\frac{D_{\nu,\eta}\mathcal K_\nu}{A}\right)t\right].
\end{equation}

For $A>A_\nu^{(0)}$, STEP3 gives $\|q_A(0)\|\ge Q_\nu/2$. STEP1 gives
\[
q_A(t)=e^{G_A^{\mathsf T}t}q_A(0),
\]
so
\[
q_A(0)=e^{-G_A^{\mathsf T}t}q_A(t).
\]
By \eqref{eq:41},
\begin{equation}\label{eq:42}
\|q_A(t)\|
\ge
\frac{\|q_A(0)\|}{D_{\nu,\eta}}
\exp\left[-\left(\gamma_\nu+\eta+\frac{D_{\nu,\eta}\mathcal K_\nu}{A}\right)t\right]
\ge
\frac{Q_\nu}{2D_{\nu,\eta}}
e^{-c_{A,\nu,\eta}t}.
\end{equation}
On the other hand,
\begin{equation}\label{eq:43}
\|q_A(t)\|^2
=\sum_{j=1}^{d_\nu}|\langle\rho_A(t),\phi_{A,j}(t)\rangle|^2
\le \|\rho_A(t)\|_2^2\,\|\Phi_A(t)\|_{H^{d_\nu}}^2
\le (K_{0,\nu}+1)^2\|\rho_A(t)\|_2^2.
\end{equation}
Combining \eqref{eq:42} and \eqref{eq:43} proves the stated lower bound.

\textbf{Dependencies:} STEP1, STEP3.

\subsubsection*{STEP5: Explicit threshold and admissible exponent}
\noindent\\
\textbf{Claim:} For every $0<\eta\le1$, define the explicit sufficient threshold
\[
A_0(\nu,\eta)
=
\max\left\{
4\mathcal K_\nu,\,
\nu,\,
\frac{64\mathcal K_\nu^2}{\nu},\,
1000\,C_{S,\nu}\mathcal K_\nu,\,
\frac{2\mathcal K_\nu\|\rho_0\|_2}{Q_\nu},\,
\frac{D_{\nu,\eta}\mathcal K_\nu}{\eta}
\right\}.
\]
Then for every $A>A_0(\nu,\eta)$,
\[
c_A:=\gamma_\nu+2\eta
\]
is an admissible exponent: there exists
\[
C=
\frac{Q_\nu}{2D_{\nu,\eta}(K_{0,\nu}+1)}>0
\]
such that
\[
\|\rho(t)\|_{L^2(\mathbb T^2)}
\ge
C e^{-c_A t}
\qquad\text{for all }t\ge0.
\]
Equivalently, without imposing the last threshold term $D_{\nu,\eta}\mathcal K_\nu/\eta$, the sharper $A$-dependent admissible exponent is
\[
c_A=\gamma_\nu+\eta+\frac{D_{\nu,\eta}\mathcal K_\nu}{A}.
\]
In the small-viscosity regime in which $\nu\lambda_1\le1$, one concrete optimized choice is
\[
\eta=\nu\lambda_1.
\]
This gives the admissible exponent
\[
c_A\le \gamma_\nu+2\nu\lambda_1
\]
for all
\[
A>
\max\left\{
4\mathcal K_\nu,\,
\nu,\,
\frac{64\mathcal K_\nu^2}{\nu},\,
1000\,C_{S,\nu}\mathcal K_\nu,\,
\frac{2\mathcal K_\nu\|\rho_0\|_2}{Q_\nu},\,
\frac{D_{\nu,\nu\lambda_1}\mathcal K_\nu}{\nu\lambda_1}
\right\}.
\]
\noindent
\textbf{Proof:}
The first four terms in $A_0(\nu,\eta)$ are precisely $A_{\rm bun,\nu}$, and the fifth term is
$A_{\rm pair,\nu}$. Thus STEP4 applies for every $A>A_0(\nu,\eta)$. Since the final term gives
\[
\frac{D_{\nu,\eta}\mathcal K_\nu}{A}<\eta,
\]
STEP4 yields
\[
c_{A,\nu,\eta}
=\gamma_\nu+\eta+\frac{D_{\nu,\eta}\mathcal K_\nu}{A}
<\gamma_\nu+2\eta.
\]
Enlarging the exponent preserves the lower bound because $e^{-c_{A,\nu,\eta}t}\ge e^{-(\gamma_\nu+2\eta)t}$ for $t\ge0$. Therefore
\[
\|\rho(t)\|_2\ge
\frac{Q_\nu}{2D_{\nu,\eta}(K_{0,\nu}+1)}
e^{-(\gamma_\nu+2\eta)t}.
\]
The constant is positive because $Q_\nu>0$ and $D_{\nu,\eta},K_{0,\nu}$ are finite.

If the last threshold term is not imposed, STEP4 directly gives the sharper $A$-dependent exponent
\[
\gamma_\nu+\eta+\frac{D_{\nu,\eta}\mathcal K_\nu}{A}.
\]
Finally, if $\nu\lambda_1\le1$, then $\eta=\nu\lambda_1$ is an allowed tuning parameter and gives the displayed small-viscosity threshold and exponent. This optimization keeps the additional tunable part of the exponent at the heat scale, but it deliberately leaves $\gamma_\nu$ explicit. No universal estimate $\gamma_\nu=O(\nu)$ is claimed; the averaged drift may have its own enhanced-dissipation spectral scale.

\textbf{Dependencies:} STEP4.

\subsubsection*{GOAL: Main Result}
\noindent\\
\textbf{Claim:} The problem statement holds with the explicit sufficient threshold and admissible exponent given in STEP5.
\noindent\\
\textbf{Proof:}
For the given nonzero mean-zero $\rho_0\in C^\infty$, STEP2 constructs the averaged detecting root data
\[
\lambda_\nu,\quad \gamma_\nu,\quad E_\nu,\quad \Phi_\nu,\quad G_\nu,\quad Q_\nu>0,\quad
K_{0,\nu},K_{2,\nu},g_\nu.
\]
STEP3 then defines the finite constants $C_R(L)$, $C_{S,\nu}$, and
\[
\mathcal K_\nu=10^6C_R(L)^2C_{S,\nu}^2M^2(1+K_{2,\nu}+g_\nu)^2,
\]
and proves persistence of the detecting adjoint bundle for every
\[
A>\max\left\{4\mathcal K_\nu,\nu,\frac{64\mathcal K_\nu^2}{\nu},
1000\,C_{S,\nu}\mathcal K_\nu,
\frac{2\mathcal K_\nu\|\rho_0\|_2}{Q_\nu}\right\}.
\]
STEP4 converts this bundle into the quantitative lower bound
\[
\|\rho(t)\|_2\ge
\frac{Q_\nu}{2D_{\nu,\eta}(K_{0,\nu}+1)}
\exp\left[-\left(\gamma_\nu+\eta+\frac{D_{\nu,\eta}\mathcal K_\nu}{A}\right)t\right].
\]
Consequently, for every $0<\eta\le1$, an explicit sufficient threshold is
\[
\boxed{
A_0(\nu,\eta)
=
\max\left\{
4\mathcal K_\nu,\,
\nu,\,
\frac{64\mathcal K_\nu^2}{\nu},\,
1000\,C_{S,\nu}\mathcal K_\nu,\,
\frac{2\mathcal K_\nu\|\rho_0\|_2}{Q_\nu},\,
\frac{D_{\nu,\eta}\mathcal K_\nu}{\eta}
\right\},
}
\]
and for every $A>A_0(\nu,\eta)$ an admissible exponent is
\[
\boxed{c_A=\gamma_\nu+2\eta.}
\]
The corresponding positive prefactor is
\[
\boxed{
C=\frac{Q_\nu}{2D_{\nu,\eta}(K_{0,\nu}+1)}.
}
\]
Without the final absorption condition $A>D_{\nu,\eta}\mathcal K_\nu/\eta$, the sharper admissible
$A$-dependent exponent is
\[
\boxed{
c_A=\gamma_\nu+\eta+\frac{D_{\nu,\eta}\mathcal K_\nu}{A}.
}
\]
In particular, when $\nu\lambda_1\le1$, choosing $\eta=\nu\lambda_1$ gives
\[
\boxed{c_A\le \gamma_\nu+2\nu\lambda_1}
\]
for every
\[
\boxed{
A>
\max\left\{
4\mathcal K_\nu,\,
\nu,\,
\frac{64\mathcal K_\nu^2}{\nu},\,
1000\,C_{S,\nu}\mathcal K_\nu,\,
\frac{2\mathcal K_\nu\|\rho_0\|_2}{Q_\nu},\,
\frac{D_{\nu,\nu\lambda_1}\mathcal K_\nu}{\nu\lambda_1}
\right\}.
}
\]
These constants depend only on $\rho_0$, $u$, $L$, $\nu$, and the chosen torus normalization. This proves the requested exponential lower bound for all $t\ge0$ and all sufficiently large fast frequencies $A$.

\textbf{Dependencies:} STEP5.
\end{proof}

\begin{remark}
This proof was produced by QED\cite{QED}, supported by a Codex coding agent integrated with ChatGPT 5.5.
\end{remark}

\section{AI proofs discussion}\label{sec:AI}
The proofs in this work were produced by QED\cite{QED}, an open-source multi-agent system for generating mathematical proofs~\cite{an2026qedopensourcemultiagentgenerating}.

QED is a single, sequential survey-plan-prove-verify-regulate loop wrapping publicly available LLM coding agents. A literature-survey agent first gathers relevant background, definitions, and known results for the problem; a plan agent sketches a proof strategy; a prover agent attempts the proof; a verifier agent checks the proof from multiple perspectives — claim-by-claim logical correctness, mathematical soundness, completeness, citation correctness, and consistency with the problem statement; and a regulator agent decides, based on the verifier's report, whether the plan agent should revise the plan or the plan should be kept and the prover should try again to fix the proof under the same plan. Because the entire pipeline — prompts, orchestration code, retry logic, verifier criteria — is open source, every step of how a given proof was obtained is fully checkable.

Three aspects of this collaboration between QED and human experts are worth highlighting.
First, beyond supplying the problem statement, the expert provided no guidance during the proving stage. QED independently carried out the proof search and verification, and the expert's role was to confirm correctness after the proofs were produced. No hints, intermediate corrections, or strategy suggestions were injected once the run began.
Second, QED's verifier exhibited a
zero false-positive rate on this problem. For this work, the QED prover produced 66 proof candidates, of which 14 were accepted by the QED verifier. The 14 accepted proofs were then independently confirmed as correct by the domain expert. This is consequential because one of the central bottlenecks in applying LLMs to research mathematics is not raw generation, but trust. A verifier whose PASS verdict can be taken at face value lets the expert allocate attention only to outputs that are likely to be real, rather than auditing every candidate proof line by line.
Third, no PDE-specific machinery was added to obtain this result. QED was run with  the same general mathematical-proving instructions used for problems in  other domains like probability, algebraic geometry, and inverse problems — with no PDE-tailored decomposition templates, no curated lemma libraries, no hand-engineered tactics for transport or advection-diffusion equations. The system treated the problem as it would any other mathematical statement, and the proof emerged from the standard plan-prove-verify-regulate loop.

The total cost of using AI for this entire work was approximately 600 US dollars, all of which was spent on API calls. The time cost for AI for those problems was less than 36 hours.

\section*{Acknowledgments}
X. Xu was partially supported by the National Key R\&D Program of China, Project No. 2021YFA1001200.

\bibliographystyle{plain}
	\bibliography{AIproof}
\end{document}